\title[Intrinsic characterization of Riemannian submanifolds]
{Intrinsic Characterization of $3$-dimensional Riemannian submanifolds of $\mathbb{R}^4$} 
\author[Y.\ Agaoka]{Yoshio Agaoka}
\author[T.\ Hashinaga]{Takahiro Hashinaga}
\address[Y.\ Agaoka]{Department of Mathematics, Hiroshima University, 
Higashi-Hiroshima 739-8521, Japan}
\address[T.\ Hashinaga]{Faculty of Education, Saga University, Saga 840-8502, Japan}
\email[Y.\ Agaoka]{agaoka@hiroshima-u.ac.jp}
\email[T.\ Hashinaga]{hashinag@cc.saga-u.ac.jp}
\keywords{Isometric embedding, Gauss equation, Codazzi equation, invariant, 
symbolic method, warped product metric, Monge-Amp\`{e}re equation}
\subjclass[2020]{Primary~ 53B25, Secondary~53C42, 15A72}
\thanks{The first author was supported by JSPS KAKENHI Grant Number 16K05132, 20K03589 
and the second author was supported by JSPS KAKENHI Grant Number 16K17603.}
\newtheorem{thm}{Theorem}[section]
\newtheorem{prop}[thm]{Proposition}
\newtheorem{lem}[thm]{Lemma}
\newtheorem{cor}[thm]{Corollary}
\theoremstyle{definition}
\newtheorem{rem}[thm]{Remark}
\newtheorem{exam}[thm]{Example}
\numberwithin{equation}{section}
\begin{document}
\begin{abstract}
It is well known that an $m$-dimensional Riemannian manifold can be locally isometrically embedded into the $m+1$-dimensional Euclidean space if and only if there exists a symmetric 2-tensor field satisfying the Gauss and Codazzi equations. 
In this paper, we prove that two known intrinsic conditions, which were obtained previously by Weiss, Thomas and Rivertz, are sufficient to ensure the existence of such symmetric 2-tensor field under certain generic condition when $m=3$. 
Note that, in the case $m=3$, a symmetric 2-tensor field satisfying the Gauss equation does not satisfy the Codazzi equation automatically, which is different from the cases $m \geq 4$.  
In our proof, the symbolic method, which is a famous 
tool known in classical invariant theory, plays an important role.
As applications of our result, we consider $3$-dimensional warped product 
Riemannian manifolds whether they can be locally isometrically embedded into 
$\mathbb{R}^4$.
In some case, the Monge-Amp\`{e}re equation naturally appears.
\end{abstract}

\maketitle

\section{Introduction}

Let $(M^m,g)$ be an $m$-dimensional Riemannian manifold. 
A smooth mapping $f$ of $(M^m,g)$ into the Euclidean space $\mathbb{R}^{m+k}$ is 
called an \textit{isometric immersion} if the Riemannian metric $g$ is equal to the pullback of 
the standard Euclidean metric by $f$.  
Namely the differential equation
\begin{align*}
\langle df, df \rangle_{\mathbb{R}^{m+k}} =g
\end{align*} 
holds, where $\langle \; , \; \rangle_{\mathbb{R}^{m+k}}$ is the standard inner product 
of $\mathbb{R}^{m+k}$.
In case $f$ is an embedding, we call it an \textit{isometric embedding}.

As is well known, any $m$-dimensional Riemannian manifold $(M^m,g)$ can be locally or  
globally isometrically embedded into a Euclidean space with sufficiently large dimension 
\cite{J2}, \cite{Car}, \cite{N}.
But generic Riemannian manifolds $(M^m,g)$ cannot be isometrically embedded 
into a low dimensional Euclidean space even locally.
If $(M^m,g)$ can be locally isometrically embedded into a low dimensional space, we may 
say that the Riemannian manifold is specific in some sense, and it is a natural problem to 
characterize such a Riemannian manifold in an intrinsic manner.

In the case of hypersurfaces, i.e., the case $k=1$, the fundamental theorem of hypersurfaces 
is well known.
It states as follows:
an $m$-dimensional Riemannian manifold $M^m$ can be locally isometrically 
embedded into $\mathbb{R}^{m+1}$ if and only if there exists a symmetric $2$-tensor field 
$\alpha$ that satisfies the Gauss and Codazzi equations.
(For details, see \S 2.)
But these equations are a system of quadratic equations and partial differential equations on 
the extrinsic quantity $\alpha$ respectively, and it is in general hard to 
determine whether $(M^m,g)$ admits such a solution or not.
 
Concerning this problem, for the case of $m \geq 4$,  $\lq\lq$intrinsic characterization'' 
of hypersurfaces of $\mathbb{R}^{m+1}$ is settled by the classical work of Thomas 
\cite{T} from a different viewpoint.
In the case $m \geq 4$, under some generic conditions on the curvature, Thomas showed that 
the Codazzi equation automatically follows from the Gauss equation, and hence 
the solvability of the Gauss equation ensures the existence of local isometric embeddings.
In addition, we can theoretically determine whether the Gauss equation admits 
a solution or not in codimension $1$ case, and we may say that the problem is settled 
in the case $m \geq 4$.
(For details on this subject, see Remark~\ref{Rem_const} (3).)

However, the situation is completely different in the case $m = 3$.
In this case, even if the Gauss equation admits a solution, it does not satisfy the 
Codazzi equation in general, and hence we must further investigate the Codazzi equation.
But the Codazzi equation is hard to treat in an intrinsic manner.
Maybe by this reason, there is no essential progress concerning this problem after Thomas, 
though $\lq\lq$local isometric embedding'' is one of the fundamental theme in classical 
differential geometry.
 
In this paper we consider the intrinsic characterization of hypersurfaces $M^3$ in 
$\mathbb{R}^4$, and show that two known obstructions 
explained below give a sufficient condition so that $M^3$ can be locally 
isometrically embedded into $\mathbb{R}^4$ under some generic condition.

Before explaining these two obstructions and our main theorem, we here state one 
important remark. 
Throughout this paper, we consider the problem of isometric embeddings 
from the $\lq\lq$local'' standpoint.
We carry out several calculations always restricted to a coordinate neighborhood around a point 
of $M^m$, and express this neighborhood by $M^m$ again by the abuse use of notations.
In addition, we always assume that this neighborhood is simply connected.
However, if necessary, we can easily re-state our results to a global fashion by adding 
some topological conditions.
By the same reason, the mapping $f$ which we consider in this paper is always assumed 
to be an embedding unless otherwise stated.

Now we state two obstructions to the existence of a local isometric embedding 
$f: (M^3,g) \longrightarrow \mathbb{R}^4$.
Weiss \cite{W} and Thomas \cite{T} proved that if the Gauss equation of $M^3$ admits a 
solution in codimension $1$, then the curvature tensor $R_{ijkl}$ must satisfy the 
following inequality 
\begin{align*}
|\widetilde{R}|:=\begin{vmatrix}
R_{1212} & R_{1213} & R_{1223}\\
R_{1213} & R_{1313} & R_{1323}\\
R_{1223} & R_{1323} & R_{2323}
\end{vmatrix} \geq 0
\end{align*}
at each point of $M^3$.
This inequality is the first obstruction.
(See also \cite{Kawa}.)
The second obstruction was found by Rivertz \cite{R1} in 1999.
If $M^3$ is isometrically embedded into $\mathbb{R}^4$, then the curvature $R_{ijkl}$ and
its covariant derivative $S_{ijklm} = \nabla_m R_{ijkl}$ must satisfy the following six equalities 
(see \S 2 for the precise notations):
 
\medskip

\begin{align*}
& r_1=(R_{1213}R_{2323} - R_{1223}R_{1323})S_{12121}
- (R_{1213}R_{1323} - R_{1223}R_{1313})S_{12122} \\
& - (R_{1212}R_{2323}- R_{1223}{}^2)S_{12131}
+ (R_{1212}R_{1323} - R_{1213}R_{1223})S_{12132} \\
& + (R_{1212}R_{1323} - R_{1213}R_{1223})S_{12231}
- (R_{1212}R_{1313} - R_{1213}{}^2)S_{12232} =0, \\
& \rule{0cm}{0.5cm} r_2=(R_{1313}R_{2323} - R_{1323}{}^2)S_{12121}
- 2(R_{1213}R_{1323}- R_{1223}R_{1313})S_{12123} \\
& - (R_{1212}R_{1313} - R_{1213}{}^2)S_{12233}
- (R_{1212}R_{2323}- R_{1223}{}^2)S_{13131} \\
& + 2(R_{1212}R_{1323} - R_{1213}R_{1223})S_{13132} 
- (R_{1212}R_{1313} - R_{1213}{}^2)S_{13232}=0, \\
& \rule{0cm}{0.5cm} r_3= (R_{1313}R_{2323} - R_{1323}{}^2)S_{12122}
- 2(R_{1213}R_{2323}- R_{1223}R_{1323})S_{12123} \\
& + (R_{1212}R_{2323} - R_{1223}{}^2)S_{12133}
- (R_{1212}R_{2323}- R_{1223}{}^2)S_{13231} \\
& + 2(R_{1212}R_{1323} - R_{1213}R_{1223})S_{23231}
- (R_{1212}R_{1313}- R_{1213}{}^2)S_{23232} =0, \\
& \rule{0cm}{0.5cm} r_4= (R_{1313}R_{2323} - R_{1323}{}^2)S_{12131}
- (R_{1213}R_{1323}- R_{1223}R_{1313})S_{12133} \\
& - (R_{1213}R_{2323}- R_{1223}R_{1323})S_{13131}
+ (R_{1212}R_{1323} - R_{1213}R_{1223})S_{13133} \\
& + (R_{1213}R_{1323} - R_{1223}R_{1313})S_{13231}
- (R_{1212}R_{1313} - R_{1213}{}^2)S_{13233} =0, \\
& \rule{0cm}{0.5cm} r_5= (R_{1313}R_{2323} - R_{1323}{}^2)S_{12132}
+ (R_{1313}R_{2323} - R_{1323}{}^2)S_{12231} \\
& - 2(R_{1213}R_{2323} - R_{1223}R_{1323})S_{13132}
+ (R_{1212}R_{2323} - R_{1223}{}^2)S_{13133} \\
&+ 2(R_{1213}R_{1323} - R_{1223}R_{1313})S_{23231}
- (R_{1212}R_{1313}  - R_{1213}{}^2)S_{23233} =0, \\
& \rule{0cm}{0.5cm} r_6= (R_{1313}R_{2323} - R_{1323}{}^2)S_{12232}
- (R_{1213}R_{2323}- R_{1223}R_{1323})S_{12233} \\
& - (R_{1213}R_{2323} - R_{1223}R_{1323})S_{13232}
+ (R_{1212}R_{2323} - R_{1223}{}^2)S_{13233} \\
& + (R_{1213}R_{1323} - R_{1223}R_{1313})S_{23232}
- (R_{1212}R_{1323} - R_{1213}R_{1223})S_{23233} =0.
\end{align*}

\medskip

\noindent
(Actually, among the above six equalities, only the first one $r_1=0$ is presented in p.99 
of  \cite{R1}.
But $r_1$ generates a $6$-dimensional $GL(3,\mathbb{R})$-irreducible representation space 
of the polynomials of curvature and its covariant derivative, and the set of six polynomials 
$\{r_1, \cdots, r_6\}$ gives a basis of this irreducible space.
See Remark~\ref{Rem_rep} (2).)

\bigskip

Then our main theorem may be stated as follows.

\begin{thm}\label{MainTh}
Let $(M^3,g)$ be a $3$-dimensional simply connected Riemannian manifold.
Assume that $|\widetilde{R}| \neq 0$ everywhere.
Then, there exists an isometric embedding $f: (M^3,g) \longrightarrow \mathbb{R}^4$ if and 
only if $|\widetilde{R}| > 0$ and Rivertz's six equalities $r_1= \cdots =r_6=0$ hold.
\end{thm}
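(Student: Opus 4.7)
The plan is to dispose of the ``only if'' direction by direct appeal to the cited works of Weiss--Thomas and Rivertz, so that the substantive content lies in the converse. For sufficiency, the strategy is to construct a smooth symmetric $2$-tensor field $\alpha$ on $M^3$ that simultaneously solves the Gauss and Codazzi equations; the classical fundamental theorem of hypersurfaces, together with the simple connectedness of $M^3$, then delivers the isometric embedding into $\mathbb{R}^4$.

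My first step would be to establish the pointwise algebraic solvability of the Gauss equation
$$R_{ijkl}=\alpha_{ik}\alpha_{jl}-\alpha_{il}\alpha_{jk},$$
viewed as six quadratic equations in the six entries of the symmetric matrix $\alpha$. Its solution structure is governed by the Weiss--Thomas discriminant $|\widetilde{R}|$: under the hypothesis $|\widetilde{R}|>0$, I would show (essentially by diagonalizing the induced curvature operator on $\Lambda^2$) that the system admits solutions at every point, unique up to the involution $\alpha\mapsto-\alpha$, and that a smooth branch can be selected on a neighborhood of any given point via an implicit function theorem argument whose nondegeneracy condition is precisely $|\widetilde{R}|\neq 0$.

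The heart of the proof is then to verify that this smoothly chosen Gauss-solution also satisfies the Codazzi equation. Covariantly differentiating the Gauss equation yields the linear system
$$S_{ijklm}=(\nabla_m\alpha_{ik})\alpha_{jl}+\alpha_{ik}(\nabla_m\alpha_{jl})-(\nabla_m\alpha_{il})\alpha_{jk}-\alpha_{il}(\nabla_m\alpha_{jk})$$
for the $18$ components $\nabla_m\alpha_{ij}$, and the Codazzi condition amounts to this system admitting a solution symmetric in the indices $(m,i)$. The obstruction to such symmetry is a tensor polynomial in $R$ and $S=\nabla R$, which I would analyze through the symbolic method of classical invariant theory, decomposing the obstruction space into $GL(3,\mathbb{R})$-irreducible components. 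The claim to establish is that, modulo the algebraic Gauss identities and the coefficient inversions legitimized by $|\widetilde R|>0$, the single six-dimensional irreducible module generated by $r_1$ exhausts all such obstructions, so that $r_1=\cdots=r_6=0$ is not merely necessary but also sufficient.

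The main obstacle, as I see it, is precisely this invariant-theoretic completeness statement: one must enumerate every $GL(3,\mathbb{R})$-equivariant Codazzi-obstruction map from the relevant Schur modules of $R$ and $S$, after imposing the Gauss relations, and verify that Rivertz's tensor generates them all. This is exactly the setting in which the symbolic calculus of classical invariant theory is most effective, and I would rely on it to carry out the decomposition and check completeness explicitly, reducing the verification to a finite-dimensional representation-theoretic computation. Once this is in hand, the smooth $\alpha$ satisfies both the Gauss and Codazzi equations on a neighborhood of each point, and the fundamental theorem of hypersurfaces on the simply connected $M^3$ concludes the proof.
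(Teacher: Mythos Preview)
Your overall architecture matches the paper's: the ``only if'' direction is Theorem~\ref{WTR}, and for the converse you construct the essentially unique smooth solution $\alpha$ of the Gauss equation (the paper does this via the explicit inverse formula of Proposition~\ref{RA2}(2) rather than an implicit function argument), then verify Codazzi for $\beta=\nabla\alpha$ and invoke the fundamental theorem. Where your plan diverges is the mechanism for the Codazzi step.

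You propose to treat the Codazzi defect as an abstract $GL(3,\mathbb{R})$-equivariant obstruction, decompose the ambient module into irreducibles, and argue that Rivertz's six-dimensional piece $S_{553}$ exhausts the obstruction space---an invariant-theoretic \emph{completeness} argument. The paper does not do this. Instead it proceeds by direct explicit computation, with the symbolic method used purely as a calculational device rather than a classification tool. Concretely: (A) it solves the derived Gauss equation for $\beta^0_{ijm}$ as an explicit symbolic expression in $\alpha^0$ and $S$ (Lemma~\ref{Lem_dG}, Proposition~\ref{Prop_beta}); (B) it introduces an auxiliary quadratic form $H_0(x_1,x_2,x_3)$ with coefficients $h^0_{ijkl}$ bilinear in $\alpha^0$ and $S$, and shows via the inverse Gauss formula that $\tfrac12 RS(x_1,x_2,x_3)=-|A_0|\,H_0(x_1,x_2,x_3)$, so Rivertz's equalities are equivalent to $h^0_{ijkl}=0$ (Proposition~\ref{Prop_Ralpha}); (C) it then proves the pointwise identity
\[
|A_0|\bigl(\beta^0_{ipq}-\beta^0_{iqp}\bigr)=-\bigl(\alpha^0_{i1}h^0_{23pq}-\alpha^0_{i2}h^0_{13pq}+\alpha^0_{i3}h^0_{12pq}\bigr)
\]
directly from the formula in (A), using only determinant manipulations and the second Bianchi identity (Proposition~\ref{Prop_Codazzi_2}). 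No module enumeration is needed.

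The trade-off: your route, if carried out, would be conceptually cleaner and would explain \emph{why} exactly six conditions suffice, but setting up the completeness statement rigorously is delicate---the Codazzi defect is not a polynomial in $R,S$ but involves $1/|A_0|$, so one must work in a mixed $(\alpha^0,S)$-module before substituting Gauss, and then check that no further obstructions survive. The paper's route sidesteps this entirely: once the explicit identity in (C) is established, sufficiency is immediate. The representation-theoretic viewpoint does appear in the paper, but only as a retrospective remark (Remark~\ref{Rem_rep}) explaining the necessity direction, not as the engine of the sufficiency proof.
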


\noindent
Clearly, $|\widetilde{R}| \neq 0$ is a generic condition on $M^3$.
This theorem gives a local intrinsic characterization of a generic 
Riemannian submanifold $M^3$ in $\mathbb{R}^4$.
Roughly speaking, this theorem is proved in the following way.
From the condition $|\widetilde{R}|>0$, it follows that the Gauss equation admits a 
unique solution up to sign \cite{W}, \cite{T}, and by using Rivertz's six equalities 
$r_1=\cdots=r_6=0$, we can prove that the solution of the Gauss equation satisfies the 
Codazzi equation, which is the essential part of the present paper.
In this way, combined with the fundamental theorem of hypersurfaces,  the above theorem 
is proved.

However, Rivertz's six equalities are too lengthy, and it is hard to treat these expressions 
directly.
To overcome this difficulty, we introduce an old tool known as the $\lq\lq$symbolic method" in 
the field of classical invariant theory, and we use it in a renewal form fitted for our purpose.
(For classical invariant theory, see for example \cite{Dol}, \cite{Gu}, \cite{O}.)
By using this renewal tool,  surprisingly, we can express Rivertz's long expressions in a quite 
compact factorized form, and we can complete the proof of Theorem~\ref{MainTh} by 
applying only elementary linear algebra.
The symbolic method must have other good applications in geometry, where 
tremendously long expressions necessarily appear.

By the above theorem, the characterization problem of local isometric 
embeddings for hypersurfaces 
by intrinsic invariants is completely settled for the case $m =3$ under the generic condition 
$|\widetilde{R}| \neq 0$.
We note that to prove the sufficiency, we must exclude the singular points 
where $|\widetilde{R}|=0$, 
and these cases seem to be hard to treat in our general setting.
For such explicit examples, see \S 5.

As an application of Theorem~\ref{MainTh}, we consider local isometric embeddings 
of warped product metrics.
In $3$-dimensional case, there are two types of warped product metric 
\begin{align*}
& ds^2 = dx_1{}^2 + f(x_1)^2 (E dx_2{}^2+2Fdx_2dx_3+Gdx_3{}^2), \\
& ds^2 = E dx_1{}^2+2Fdx_1dx_2+Gdx_2{}^2+ f(x_1,x_2)^2 dx_3{}^2.
\end{align*}
In the former case $E,F,G$ are functions of $x_2$, $x_3$, and in the latter case, they are 
functions of $x_1$, $x_2$.
We assume that the warping function $f$'s are positive for both cases.

Roughly speaking, under the generic condition $|\widetilde{R}| \neq 0$, the former metric 
can be locally isometrically embedded into $\mathbb{R}^4$ if and only if the 
$2$-dimensional fiber is a space of positive constant curvature, satisfying the inequality 
$K > f^{\prime}{}^2$, where $K$ is the Gaussian curvature of the fiber 
(Theorem~\ref{Appl1}).
For the latter metric, it can be locally isometrically embedded into $\mathbb{R}^4$ if 
and only if the warping function $f(x_1,x_2)$ satisfies a certain Monge-Amp\`{e}re 
differential equation and some inequality on the Hessian of $f$ 
(Theorem~\ref{Appl3}).

As another application, we consider the classification of left-invariant 
Riemannian metrics on $3$-dimensional Lie groups that 
can be locally isometrically embedded into $\mathbb{R}^4$.
This problem is completely settled in our 
previous paper \cite{AH}.
We re-examine this result in the last part of this paper from a different standpoint, base 
on our main theorem.

\medskip

Now we explain the contents of this paper.
In \S 2, we review some fundamental facts on hypersurfaces of the space $\mathbb{R}^{m+1}$ 
such as the fundamental theorem of hypersurfaces and the classical results of Weise, Thomas 
and Rivertz.
We prove their results in \S 3 for the sake of completeness.
Especially, in proving Rivertz's result, we use the symbolic method, and we review this 
classical tool in some detail during the proof.
We prove our main theorem (Theorem~\ref{MainTh}) in \S 4.
We show that under the conditions $|\widetilde{R}|>0$ and $r_1=\cdots=r_6=0$, there exists 
a local isometric embedding from $M^3$ into $\mathbb{R}^4$.
We repeatedly use the symbolic method in the proof.
In the final section \S 5, we state some application of the main theorem.
We exhibit the results concerning the warped product metrics and the left-invariant metrics 
on $3$-dimensional Lie groups here.

\medskip

For the survey of isometric embedding problems, see the summary 
papers \cite{AK10}, \cite{Bor1}, \cite{Chen1}, \cite{GR}, \cite{Mat}, \cite{PS}, \cite{Sp}.

\bigskip

%{\bf Acknowledgement.}
%The first author was supported by JSPS KAKENHI Grant Number 16K05132, 20K03589 
%and the second author was supported by JSPS KAKENHI Grant Number 16K17603.

\bigskip

\section{Basic facts and known results on hypersurfaces in the Euclidean spaces}

We first recall some fundamental facts on hypersurfaces in the Euclidean space 
$\mathbb{R}^{m+1}$, following \cite{D}, \cite{KN}, etc. 
Let $(M^m,g)$ be an $m$-dimensional Riemannian manifold,  
and $f$ be an isometric embedding of $(M^m,g)$ into $\mathbb{R}^{m+1}$. 

Let $\nabla$ and $\overline{\nabla}$ be the Levi-Civita connections of $M^m$ and 
$\mathbb{R}^{m+1}$ respectively. 
Then, for any vector fields $X,Y$ on $M^m$, we decompose 
$\overline{\nabla}_X Y$ into its 
tangential part  $(\overline{\nabla}_X Y)^{\top }$ and its normal part
 $(\overline{\nabla}_X Y)^{\perp}$.
Note that $\nabla_X Y=(\overline{\nabla}_X Y)^{\top }$.
We define the second fundamental form $\overline{\alpha}$ of $M^m$ by 
\begin{align*}
\overline{\alpha} (X,Y) :=(\overline{\nabla}_X Y)^{\perp},
\end{align*}
and the covariant derivative of the second fundamental form by
\begin{align*}
(\nabla_Z^{\perp} \overline{\alpha})(X,Y):= \nabla^{\perp}_Z \overline{\alpha}(X,Y) 
- \overline{\alpha}(\nabla_Z X, Y)- \overline{\alpha}(X, \nabla_Z Y)
\end{align*}
for vector fields $X,Y,Z$ on $M^m$.
As is well known, the second fundamental form $\overline{\alpha}$ is symmetric, and 
$\overline{\alpha}$, $\nabla^{\perp} \overline{\alpha}$ are tensor fields on $M^m$.
In addition, they satisfy the following equations 
\begin{align*}
\tag{$\overline{1}$} & -g( R(X,Y)Z , W ) = \langle \overline{\alpha}(X,Z) , 
\overline{\alpha}(Y,W) \rangle 
- \langle \overline{\alpha}(X,W) , \overline{\alpha}(Y,Z) \rangle, \\
\tag{$\overline{2}$} & (\nabla_Z ^{\perp} \overline{\alpha}) (X,Y) 
= (\nabla_Y ^{\perp} \overline{\alpha}) (X,Z)
\end{align*}
\noindent
for vector fields $X,Y,Z,W$ on $M^m$. 
Here $R$ is the Riemannian curvature tensor of $M^m$. 
The first equation is called the {\it Gauss equation}, and the second one the {\it Codazzi equation}.

We fix a unit normal vector field $\nu$ on $M^m$ throughout this paper.
(Note that such a vector field exists locally, or in case $M^m$ is simply connected.)
Then the second fundamental form $\overline{\alpha}$ may be expressed as
$$
\overline{\alpha}(X,Y) = \alpha(X,Y)\nu
$$
for some symmetric $2$-tensor $\alpha$.
Then the above two equations $(\overline{1})$ and $(\overline{2})$ can be expressed as 
\begin{align*}
\tag{1} & -g( R(X,Y)Z , W ) = \alpha(X,Z) \alpha(Y,W) - \alpha(X,W) \alpha(Y,Z), \\
\tag{2} & (\nabla_Z\alpha) (X,Y) = (\nabla_Y\alpha)(X,Z).
\end{align*}

Now we recall the fundamental theorem of hypersufaces in the Euclidean space, which is a
key theorem in this paper (cf. \cite{Chen2}, \cite{D}, \cite{DT}, \cite{KN}).

\begin{thm}\label{FT}
Let $(M^m,g)$ be an $m$-dimensional simply connected Riemannian manifold. 
Then, there exists an isometric embedding $f:(M^m,g) \longrightarrow 
\mathbb{R}^{m+1}$ if and only if there exists a symmetric $2$-tensor field $\alpha$ 
on $M^m$ satisfying the Gauss and Codazzi equations $(1)$, $(2)$.
\end{thm}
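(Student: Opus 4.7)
The necessity direction is immediate from the discussion preceding the theorem: if $f\colon (M^m,g)\to\mathbb{R}^{m+1}$ is an isometric embedding, then relative to a chosen unit normal $\nu$ the scalar-valued second fundamental form $\alpha$ automatically satisfies equations $(1)$ and $(2)$ by the derivation of the Gauss and Codazzi equations already given. All the work lies in the sufficiency direction, and my plan is to turn the hypotheses into the integrability conditions for a flat metric connection on an abstract rank $(m+1)$ bundle over $M^m$, then integrate.

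Concretely, given $g$ and a symmetric $2$-tensor $\alpha$ solving $(1)$ and $(2)$, I would form the bundle $E=TM\oplus\underline{\mathbb{R}}\nu$, equip it with the direct-sum inner product declaring $\nu$ a unit vector orthogonal to $TM$, and define a connection $D$ on $E$ by
\begin{align*}
D_X(Y+t\nu)=\bigl(\nabla_XY-tAX\bigr)+\bigl(X(t)+\alpha(X,Y)\bigr)\nu,
\end{align*}
where $A$ is the symmetric endomorphism of $TM$ characterised by $g(AX,Y)=\alpha(X,Y)$. The motivation is that if $M^m$ were already sitting inside $\mathbb{R}^{m+1}$ with unit normal $\nu$, this $D$ would be the flat Euclidean connection split into tangential and normal parts. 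A direct check shows $D$ preserves the metric. The decisive step is to verify that $D$ is \emph{flat}: splitting the curvature $R^D(X,Y)$ of $D$ into its $TM$-component and its $\nu$-component, the $TM$-component vanishes precisely by the Gauss equation $(1)$, while the $\nu$-component vanishes precisely by the Codazzi equation $(2)$, and the symmetry of $\alpha$ takes care of the remaining tensorial bookkeeping. This curvature computation is the heart of the argument, and it is exactly the place where both hypotheses are used.

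Once $D$ is shown to be a flat metric connection and $M^m$ is assumed simply connected, the standard theory of flat bundles provides a global $D$-parallel orthonormal frame $e_1,\dots,e_m,e_{m+1}$ of $E$, which I identify with the standard basis of $\mathbb{R}^{m+1}$. I would then introduce the $\mathbb{R}^{m+1}$-valued $1$-form $\omega$ on $M$ defined by $\omega(X)=\sum_{i=1}^{m+1} g_E\!\left(X,e_i^{\top}\right)\epsilon_i$, where $e_i^{\top}$ denotes the $TM$-part of $e_i$, $g_E$ is the extended bundle metric, and $\epsilon_i$ is the $i$-th standard basis vector of $\mathbb{R}^{m+1}$. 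Parallelism of the $e_i$ together with the connection formula for $D$ forces $d\omega=0$, so by the Poincar\'{e} lemma there exists $f\colon M^m\to\mathbb{R}^{m+1}$ with $df=\omega$, and by construction $\langle df,df\rangle_{\mathbb{R}^{m+1}}=g$; shrinking the coordinate neighbourhood if necessary to ensure injectivity of $f$ promotes the isometric immersion to an embedding.

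The main obstacle is the flatness verification for $D$, where the three curvature components (pure tangential, mixed tangential--normal, and pure normal) must be matched with $(1)$, $(2)$, and the symmetry of $\alpha$ respectively; the subsequent passage from parallel frames to the map $f$ via the Poincar\'{e} lemma is a formal consequence of simple connectivity, and the equality $\langle df,df\rangle = g$ is built into the choice of $\omega$.
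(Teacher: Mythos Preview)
The paper does not actually supply a proof of this theorem: it is quoted as a classical background result with references (\cite{Chen2}, \cite{D}, \cite{DT}, \cite{KN}), so there is no ``paper's own proof'' to compare against. Your sketch is correct and is essentially the standard argument one finds in those references, particularly the bundle-theoretic treatment in Dajczer--Tojeiro: build $E=TM\oplus\underline{\mathbb{R}}$ with the Gauss--Weingarten connection $D$, check that flatness of $D$ is equivalent to $(1)$ and $(2)$, trivialise over the simply connected $M$ by a $D$-parallel orthonormal frame, and integrate the resulting closed $\mathbb{R}^{m+1}$-valued $1$-form. The only points worth making explicit in a full write-up are that the closedness of $\omega$ uses both the torsion-freeness of $\nabla$ and the symmetry of $\alpha$ (you allude to this), and that the passage from immersion to embedding is local, in keeping with the paper's standing convention that everything is done on a small simply connected coordinate neighbourhood.
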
 

\noindent
This theorem is very useful to show the existence of a local isometric embedding $f$, because 
we have only to find a symmetric tensor field $\alpha$ satisfying two equations $(1)$ and $(2)$. 
However, it is very hard to show the non-existence of a local isometric 
embedding for a given Riemannian manifold by using this theorem.
In fact, the equation (2) is a differential equation on $\alpha$ and it is in general difficult to 
show the non-existence of a solution of (2) among the set of solutions of the algebraic 
equation (1).

Concerning the existence and non-existence of a local isometric embedding $f$, we 
review classical results of Thomas \cite{T} on the intrinsic characterization of Riemannian 
manifolds that can be locally isometrically embedded into $\mathbb{R}^{m+1}$.

\begin{thm}[\cite{T}]\label{Thomas}
Let $(M^m,g)$ be a Riemannian manifold.
Suppose that the type number $\geq 4$ at any point of $M^m$. 
If there exists a symmetric $2$-tensor field $\alpha$ satisfying the Gauss equation on 
$M^m$, then $\alpha$ automatically satisfies the Codazzi equation on 
$M^m$.
\end{thm}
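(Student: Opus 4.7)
The plan is to convert the Codazzi equation into a consequence of the second Bianchi identity, using the hypothesised Gauss equation to turn $\nabla R$ into a quantity built from $\alpha$ and $\nabla\alpha$, and then to invoke the rank hypothesis to solve for $\nabla\alpha$.

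Concretely, in a local orthonormal frame I would start from the second Bianchi identity
\[
\nabla_m R_{ijkl} + \nabla_k R_{ijlm} + \nabla_l R_{ijmk} = 0,
\]
and differentiate the Gauss equation $R_{ijkl} = \alpha_{ik}\alpha_{jl} - \alpha_{il}\alpha_{jk}$ to expand each $\nabla R$ as a sum of products of components of $\alpha$ and $\nabla\alpha$. Substituting and simplifying, the $\nabla R$ content is absorbed into these products and one is left with a purely algebraic tensorial identity of the schematic shape $\sum (\nabla\alpha)\cdot\alpha = 0$. Setting $C_{pij} := \nabla_p\alpha_{ij} - \nabla_i\alpha_{pj}$, so that $C$ is antisymmetric in $p,i$ and Codazzi is exactly the statement $C \equiv 0$, the identity reorganises into a homogeneous linear system for the components of $C$ whose coefficients are quadratic expressions in the components of $\alpha$.

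The final step is to show that this linear system has only the trivial solution at each point under the type number hypothesis. Fixing a point $x \in M^m$, I would diagonalise $\alpha$ in an orthonormal basis, writing $\alpha_{ii} = \lambda_i$ with at least four nonzero eigenvalues $\lambda_{a_1},\ldots,\lambda_{a_4}$. Specialising the algebraic identity to index configurations drawn from $\{a_1,\ldots,a_4\}$ should produce, for each triple $(p,i,j)$, an equation whose leading coefficient is a nonzero product of eigenvalues, thereby forcing $C_{pij}(x) = 0$. Once $C$ vanishes pointwise, $\alpha$ satisfies the Codazzi equation globally.

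The hard part, as I expect it, is the linear algebra of this last step: the need for \emph{four} (rather than three) independent nondegenerate eigendirections is a subtle combinatorial feature of the algebraic identity, and is precisely why the argument collapses when $m = 3$ — the situation that motivates the present paper. One must carefully arrange the antisymmetrisations of the Bianchi sum so that four distinct indices are genuinely in play, and then exhibit a triangular-like structure that solves for the components of $C$ one at a time; with only three nondegenerate directions available, this triangulation fails and the residual freedom is the very gap repaired in the authors' main theorem via Rivertz's equalities.
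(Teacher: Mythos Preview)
Your approach is essentially the one the paper sketches (in Remark~\ref{G-C}; the paper does not give a full proof of this classical result, deferring instead to Thomas~\cite{T}): differentiate the Gauss equation, combine with the second Bianchi identity to obtain the homogeneous linear system~(\ref{secondB}) in the Codazzi defect $\beta_{ipq}-\beta_{iqp}$ with coefficients in $\alpha$, and then use the rank hypothesis on $\alpha$ to force this defect to vanish. Both your proposal and the paper's remark leave the explicit linear-algebraic verification of the last step---precisely why four nondegenerate eigendirections suffice while three do not---to the original reference.
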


\noindent
For the definition of the type number, see \cite{T}, \cite{KN}.
We note that  the type number is an intrinsic quantity of a Riemannian manifold $(M^m,g)$.
For details, see Theorem 6.1 in p.42 of \cite{KN} and also the comments after Corollary 6.5 
in p.46.
We emphasize that Theorem~\ref{Thomas} is useful only in the case $m \geq 4$.
(See also Remark~\ref{G-C}.)

Concerning the solvability of the Gauss equation, we already know a theoretical  
method to determine whether it admits a solution or not, as we stated in Introduction.
For details, see Remark~\ref{Rem_const} (3).
See also Vilms \cite{V} for the case $m \geq 5$.

In this paper we give an intrinsic characterization of $3$-dimensional Riemannian manifolds
so that it can be locally isometrically embedded into $\mathbb{R}^4$.
For this purpose we must treat the Codazzi equation in an intrinsic manner.
This problem is already treated in Thomas \cite{T} and Rosenson \cite{Ro}.
But the statement in p.206 of \cite{T}, or the condition stated in p.345 of \cite{Ro} is 
merely a direct rehash of the Codazzi equation, and it seems to be far from satisfactory as 
a final intrinsic characterization.

Now we review the results of Weise \cite{W}, Thomas \cite{T} and Rivertz \cite{R1} 
for the case $\dim M = 3$.
We prove this theorem in \S 3 for the sake of completeness, by using the symbolic method.

\begin{thm}\label{WTR}
Assume that a $3$-dimensional Riemannian manifold $(M^3,g)$ is isometrically 
embedded into $\mathbb{R}^4$.
Then:

$(1)$ $(${\rm Weiss, Thomas}$)$ The following inequality holds:
\begin{align*}
|\widetilde{R}|=\begin{vmatrix}
R_{1212} & R_{1213} & R_{1223}\\
R_{1213} & R_{1313} & R_{1323}\\
R_{1223} & R_{1323} & R_{2323}
\end{vmatrix} \geq 0.
\end{align*}

$(2)$ $(${\rm Rivertz}$)$ The curvature tensor $R$ and its covariant derivative 
$S=\nabla R$ satisfy the six equalities  $r_1 = \cdots = r_6=0$ listed up in Introduction.
\end{thm}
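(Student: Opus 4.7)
The plan is to deduce both claims from the Gauss and Codazzi equations, taking advantage of the symbolic (umbral) machinery advertised in the introduction to keep the calculation for part (2) within reason. For part (1), I substitute the Gauss equation $(1)$ into the entries of $\widetilde{R}$ and observe that, with the sign convention of $(1)$ and the ordering $e_{1}\wedge e_{2},\,e_{1}\wedge e_{3},\,e_{2}\wedge e_{3}$ of the index pairs, each entry of $\widetilde{R}$ is a $2\times 2$ minor of the symmetric $3\times 3$ matrix $A=(\alpha_{ij})$. Consequently $\widetilde{R}$ is the second compound matrix $\Lambda^{2}A$, which for a $3\times 3$ matrix coincides with the classical adjugate, and the identity $\det(\mathrm{adj}\,A)=(\det A)^{2}$ gives $|\widetilde{R}|=(\det A)^{2}\ge 0$, which is the inequality of Weiss and Thomas.

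For part (2), I first note that the Codazzi equation $(2)$, combined with the symmetry $\alpha_{ij}=\alpha_{ji}$, forces $\beta_{ijk}:=\nabla_{k}\alpha_{ij}$ to be totally symmetric in $i,j,k$. Covariantly differentiating the Gauss equation then produces
\begin{equation*}
S_{ijklm}=\beta_{ikm}\alpha_{jl}+\alpha_{ik}\beta_{jlm}-\beta_{ilm}\alpha_{jk}-\alpha_{il}\beta_{jkm}.
\end{equation*}
Each $r_{a}$ is bilinear in $R$ and $S$, so after substituting the Gauss formula for $R_{ijkl}$ and the expression above for $S_{ijklm}$, the assertion $r_{a}=0$ becomes a polynomial identity of bidegree $(5,1)$ in $(\alpha,\beta)$ that must hold for every symmetric $\alpha$ and every totally symmetric $\beta$. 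Since $\{r_{1},\dots,r_{6}\}$ is a basis of an irreducible $GL(3,\mathbb{R})$-representation (as the introduction records), it suffices to verify one of these identities, say $r_{1}=0$, from which covariance under change of frame produces the remaining five.

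This single verification is what the symbolic method makes transparent. The plan is to encode $\alpha$ by umbral squares $u^{2}=v^{2}=\cdots$ and $\beta$ by a totally symmetric cube, so that each curvature-quadratic coefficient $R_{abcd}R_{efgh}-\cdots$ appearing in $r_{1}$ becomes a bracket monomial in the umbral vectors, while the factor $S_{pqrst}$ contributes one further umbral vector. The total symmetry of $\beta$ then permits symmetrizing the resulting alternating sum into products of $3\times 3$ bracket determinants; after this repackaging the expression collapses to one containing a bracket two of whose slots are forced, after symmetrization, to be filled by the same umbral vector, whence the whole expression vanishes identically. The main obstacle I foresee is organizational rather than conceptual: pairing the curvature indices of the various $R_{\cdot\cdot\cdot\cdot}$ factors with the correct umbral letters so that the cancellations drop out of the symbolic pattern rather than being buried under the raw six-term alternating sums.
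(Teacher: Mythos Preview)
Your argument for part~(1) is the same as the paper's: both amount to the identity $|\widetilde{R}|=(\det A)^{2}$, which the paper derives from the adjugate relation and extends to singular $A$ by continuity.

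For part~(2), your approach is correct in outline but differs from the paper's main proof in packaging, and your final step is underspecified. The paper does \emph{not} reduce to a single $r_{a}$; instead it assembles all six into one quadratic polynomial $RS(x_{1},x_{2},x_{3})$ in three auxiliary variables, whose coefficients are $2r_{1},\dots,2r_{6}$ (Proposition~\ref{Prop_Riv}). The symbolic substitutions $R_{ij}=\tfrac{1}{\sqrt{2}}(\alpha^{(1)}\wedge\alpha^{(2)})_{ij}$, $S_{ij}=(\alpha^{(5)}\wedge\beta)_{ij}$, $S_{m}=\beta_{m}$ convert $RS$ into a product of three $3\times 3$ determinants (via a determinantal identity, Lemma~\ref{Lem_det_id}), and the vanishing then falls out of the \emph{exchanging method}: swapping the symbols $\alpha$ and $\alpha^{(5)}$ changes the sign of one determinant and leaves the rest invariant, so the expression equals its own negative. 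Your irreducibility-plus-equivariance reduction to $r_{1}=0$ is valid (and the paper records it as an alternative in Remark~\ref{Rem_rep}~(3), where it even notes that the target decomposition $(S_{5}\circ S_{2})\otimes S_{3}$ contains no copy of $S_{553}$, so no hand verification of any $r_{a}$ is needed at all). However, if you insist on verifying $r_{1}=0$ directly by umbral calculus, be aware that this is exactly the step Rivertz's original proof spends several pages on; your description (``two slots forced to be filled by the same umbral vector'') is too vague to count as a proof, and the reason the paper's $RS$-packaging is preferable is that the factorization into a product of determinants only becomes visible once all six $r_{a}$ are combined with the auxiliary variables $x_{i}$.
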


\noindent
Here, the component of the curvature tensor $R$ is given by 
$$
R_{ijkl} = -g(R(X_i,X_j)X_k,X_l) = R(X_i,X_j,X_k,X_l), 
$$
where $X_1$, $X_2$, $X_3$ are local vector fields on $M^3$, giving a basis of 
the tangent space $T_pM^3$ at each point $p$.
Then the coefficient of the covariant derivative $S = \nabla R$ is given by 
$$
S_{ijklm} = (\nabla_{X_m} R)(X_i,X_j,X_k,X_l).
$$
We denote the component of the second fundamental form $\alpha$ by 
$\alpha_{ij}=\alpha(X_i,X_j)$ $(\alpha_{ij} = \alpha_{ji})$.
Then the Gauss equation is expressed as 
\begin{align*}
& R_{ijkl} = \alpha_{ik}\alpha_{jl} - \alpha_{il}\alpha_{jk}.
\end{align*}
The covariant derivative of the Gauss equation is given by 
$$
S_{ijklm} = \beta_{ikm}\alpha_{jl}+\alpha_{ik}\beta_{jlm}-\beta_{ilm}\alpha_{jk}-
\alpha_{il}\beta_{jkm},
$$
where $\beta= \nabla \alpha$ and $\beta_{ijk} = (\nabla_{X_k}\alpha)(X_i,X_j)$.
We call the above equality the \textit{derived Gauss equation} in this paper.
This equation plays a crucial role in the following arguments.
Note that the tensor $\beta$ satisfies $\beta_{ijk} = \beta_{jik}$ and the Codazzi equation 
$\beta_{ijk} = \beta_{ikj}$.
Hence, $\beta$ is a symmetric $3$-tensor field.
The tensor $S$ satisfies the second Bianchi identity 
$$
S_{ijklm} + S_{ijlmk} + S_{ijmkl} = 0,
$$
in addition to the standard symmetric and anti-symmetric properties 
\begin{align*}
& S_{ijklm} = -S_{jiklm} = -S_{ijlkm} = S_{klijm}.
\end{align*}

We add one remark.
It is known that if $M^3$ can be isometrically embedded into $\mathbb{R}^4$, and the 
type number $=3$ at every point of $M^3$, then the isometric embedding $f : M^3 
\rightarrow \mathbb{R}^4$ is rigid.
Note that the assumption on the type number is a generic condition on $(M^3.g)$ in this 
situation.
For details, see \cite{KN}.

\bigskip

\section{Proof of Theorem~\ref{WTR}}

In this section, we give a proof of Theorem~\ref{WTR} for the sake of completeness.
Especially, our proof of Rivertz's result (Theorem~\ref{WTR} (2)) is based on the symbolic 
method in classical invariant theory, which is different from his original proof of $r_1=0$ given 
in p.119$\sim$122 of \cite{R1}.
We explain this classical tool in detail during the proof of Theorem~\ref{WTR} (2).
In the rest of this paper we always assume ${\rm dim}\: M = 3$, unless otherwise stated.

\subsection{Inverse formula of the Gauss equation}
We use the same notations as in the previous section.
Assume that a $3$-dimensional Riemannian manifold $(M,g)$ is isometrically 
embedded into $\mathbb{R}^4$ with its second fundamental form $\alpha$.
We put 
$$
A :=\begin{pmatrix}
\alpha_{11} & \alpha_{12} & \alpha_{13} \\
\alpha_{12} & \alpha_{22} & \alpha_{23} \\
\alpha_{13} & \alpha_{23} & \alpha_{33} \\
\end{pmatrix}.
$$
Then we have the following proposition.
Theorem~\ref{WTR} (1) follows immediately from (1) of this proposition.
The latter part (2) plays an essential role in the proof of Theorem~\ref{WTR} (2) 
(cf. Lemma~\ref{Lem_det_id}) and also the arguments in \S 4.

\begin{prop}\label{RA2}
$(1)$ The equality $|\widetilde{R}| = |A|^2$ holds.

$(2)$ At the point $p \in M$ where $|\widetilde{R}|>0$, the second fundamental form 
$\alpha_{ij}$ can be expressed uniquely as functions of $R_{ijkl}$ as follows:

\begin{align*}
\alpha_{11} = \frac{\varepsilon}{\sqrt{|\widetilde{R}|}}
\begin{vmatrix}
R_{1212} & R_{1213} \\
R_{1213} & R_{1313} 
\end{vmatrix}, \quad 
\alpha_{12} = \frac{\varepsilon}{\sqrt{|\widetilde{R}|}}
\begin{vmatrix}
R_{1212} & R_{1223} \\
R_{1213} & R_{1323} 
\end{vmatrix}, \\
\rule{0cm}{0.9cm} \alpha_{13} = \frac{\varepsilon}{\sqrt{|\widetilde{R}|}}
\begin{vmatrix}
R_{1213} & R_{1223} \\
R_{1313} & R_{1323} 
\end{vmatrix}, \quad 
\alpha_{22} = \frac{\varepsilon}{\sqrt{|\widetilde{R}|}}
\begin{vmatrix}
R_{1212} & R_{1223} \\
R_{1223} & R_{2323} 
\end{vmatrix}, \\
\rule{0cm}{0.9cm} \alpha_{23} = \frac{\varepsilon}{\sqrt{|\widetilde{R}|}}
\begin{vmatrix}
R_{1213} & R_{1223} \\
R_{1323} & R_{2323} 
\end{vmatrix}, \quad 
\alpha_{33} = \frac{\varepsilon}{\sqrt{|\widetilde{R}|}}
\begin{vmatrix}
R_{1313} & R_{1323} \\
R_{1323} & R_{2323} 
\end{vmatrix}, 
\end{align*}
where $\varepsilon=1$ or $-1$.
\end{prop}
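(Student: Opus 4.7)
The plan is to recognize the matrix $\widetilde{R}$ as a familiar object of classical linear algebra and then deduce both assertions from standard identities for compound and adjugate matrices.

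Reading the Gauss equation $R_{ijkl}=\alpha_{ik}\alpha_{jl}-\alpha_{il}\alpha_{jk}$, each entry $R_{ijkl}$ with $i<j$ and $k<l$ is precisely the $2\times 2$ minor of $A$ on the rows $\{i,j\}$ and columns $\{k,l\}$. Indexing rows and columns of $\widetilde{R}$ by the three pairs $(1,2),(1,3),(2,3)$, this exhibits $\widetilde{R}$ as the second compound matrix $C_2(A)$.

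For part (1), I would invoke (or verify by a direct polynomial expansion) the Sylvester--Franke identity $\det C_k(A)=(\det A)^{\binom{n-1}{k-1}}$, which for $n=3$, $k=2$ reads $|\widetilde{R}|=|A|^2$. A direct verification is a routine expansion of a $3\times 3$ determinant whose entries are quadratic in $\alpha_{ij}$; the symbolic method reviewed later in the paper would also dispatch this cleanly, but at this stage elementary algebra suffices.

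For part (2), the key observation is Jacobi's determinant identity: for a $3\times 3$ matrix $A$, any $2\times 2$ minor of $\mathrm{adj}(A)$ equals $|A|$ times the corresponding complementary entry of $A$. In our indexing, $\widetilde{R}$ and $\mathrm{adj}(A)$ agree up to a permutation of the underlying basis, so each $2\times 2$ minor of $\widetilde{R}$ appearing on the right-hand side of the proposition expands, after applying the Gauss equation and regrouping, to $\alpha_{ij}\cdot|A|$. For instance, one checks directly that
\[
\begin{vmatrix}R_{1212}&R_{1213}\\ R_{1213}&R_{1313}\end{vmatrix}=\alpha_{11}\cdot |A|,
\]
and the other five minors behave analogously. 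Combined with part (1), which gives $|A|=\varepsilon\sqrt{|\widetilde{R}|}$ for some $\varepsilon=\pm 1$ at any point where $|\widetilde{R}|>0$, dividing through yields the six stated formulas. Uniqueness up to sign follows because two symmetric solutions $A,A'$ of the Gauss equation produce the same compound $C_2(A)=C_2(A')=\widetilde{R}$, whence $\mathrm{adj}(A)$ and $\mathrm{adj}(A')$ coincide up to reindexing; since $|A|,|A'|\neq 0$, this forces $A'=\pm A$.

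The main obstacle is not conceptual but organisational: one must fix a consistent dictionary between the pair-indexed entries of $\widetilde{R}$ and the single-indexed entries of $\mathrm{adj}(A)$, and carefully track signs throughout. Once this dictionary is fixed, all six formulas for $\alpha_{ij}$ in part (2) follow from a single algebraic pattern (Jacobi's identity) rather than six independent computations, and part (1) is an immediate consequence of the compound-matrix determinant formula.
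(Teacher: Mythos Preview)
Your argument is correct and rests on the same observation as the paper's: via the Gauss equation, $\widetilde{R}$ is (up to a harmless reindexing and sign pattern) the adjugate of $A$. The paper makes this explicit by writing out $A^{-1}=\frac{1}{|A|}\mathrm{adj}(A)$ entrywise in terms of the $R_{ijkl}$, then takes determinants to get $|\widetilde{R}|=|A|^2$; you name the same fact as the Sylvester--Franke identity for $C_2(A)$. For part~(2) the paper inverts the matrix relation $A^{-1}=\frac{1}{|A|}(\text{curvature matrix})$ and reads off each $\alpha_{ij}$ as a $2\times 2$ cofactor of $\widetilde{R}$ divided by $|A|$, whereas you invoke Jacobi's complementary-minor identity directly; these are two packagings of the same computation. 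Your framing has the advantage of making the classical identities visible, while the paper's is fully self-contained. One small point: the paper covers the degenerate case $|A|=0$ in part~(1) by a density/continuity argument (both sides are polynomials in the $\alpha_{ij}$ that agree on the open dense set of nonsingular $A$), which you should add since Sylvester--Franke, as usually stated, presumes nothing but your subsequent use of $|A|=\varepsilon\sqrt{|\widetilde{R}|}$ does.
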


\begin{proof}
To prove this proposition, we only use the pointwise value of $R_{ijkl}$ and 
$\alpha_{ij}$.
Hence, in this proof, we consider $\alpha_{ij}$ as a symmetric tensor on 
a $3$-dimensional abstract vector space, and $R_{ijkl}$ as a curvature like tensor on 
this vector space, satisfying the Gauss equation.

(1) 
First of all, we treat the case where $A$ is non-singular.
From the Gauss equation, we have
\begin{align}
A^{-1} & = \frac{1}{|A|} 
\begin{pmatrix}
\alpha_{22}\alpha_{33}-\alpha_{23}{}^2 & 
\alpha_{13}\alpha_{23}-\alpha_{12}\alpha_{33} & 
\alpha_{12}\alpha_{23}-\alpha_{13}\alpha_{22} \\
\alpha_{13}\alpha_{23}-\alpha_{12}\alpha_{33} & 
\alpha_{11}\alpha_{33}-\alpha_{13}{}^2 & 
\alpha_{12}\alpha_{13}-\alpha_{11}\alpha_{23} \\
\alpha_{12}\alpha_{23}-\alpha_{13}\alpha_{22} &
\alpha_{12}\alpha_{13}-\alpha_{11}\alpha_{23} &
\alpha_{11}\alpha_{22}-\alpha_{12}{}^2
\end{pmatrix} \label{eq6} \\
& = \frac{1}{|A|}
\begin{pmatrix}
R_{2323} & -R_{1323} & R_{1223} \\
-R_{1323} & R_{1313} & -R_{1213} \\
R_{1223} & -R_{1213} & R_{1212}
\end{pmatrix}. \notag
\end{align}
By taking the determinants of both sides, we have 
$$
\frac{1}{|A|} = \frac{1}{|A|^3} |\widetilde{R}|,
$$
which implies $|\widetilde{R}| = |A|^2$.

Next we consider the singular case $|A|=0$.
The components $R_{ijkl}$ of the determinant $|\widetilde{R}|$ are quadratic polynomials of 
$\alpha_{ij}$, and hence $|\widetilde{R}|-|A|^2$ is a homogeneous polynomial of 
$\alpha_{ij}$ with degree six.
This polynomial vanishes identically on the open dense subset consisting of 
non-singular symmetric $3$-matrices $A=(\alpha_{ij})$, as we showed above.
Since the polynomial $|\widetilde{R}|-|A|^2$ is continuous as a function of 
$\alpha_{ij}$, it vanishes identically on the whole space, which implies that the equality 
$|\widetilde{R}|-|A|^2=0$ also holds in case $|A|=0$.

(2) 
From the assumption $|\widetilde{R}|>0$, it follows that the matrix $A$ is non-singular.
Then, taking the inverse of the equality (\ref{eq6}), we have 
$$
A = |A|
\begin{pmatrix}
R_{2323} & -R_{1323} & R_{1223} \\
-R_{1323} & R_{1313} & -R_{1213} \\
R_{1223} & -R_{1213} & R_{1212}
\end{pmatrix}^{-1}.
$$
From this equality and $|\widetilde{R}|=|A|^2$, we see that the $(1,1)$-component of 
$A$ is 
$$
\alpha_{11}=
\frac{|A|}{|\widetilde{R}|} \begin{vmatrix}
R_{1313} & -R_{1213} \\ -R_{1213} & R_{1212} 
\end{vmatrix}
= \frac{1}{|A|} \begin{vmatrix}
R_{1212} & R_{1213} \\ R_{1213} & R_{1313} 
\end{vmatrix}.
$$
We put $\varepsilon = \sqrt{|\widetilde{R}|}/|A|$.
Then it is $1$ or $-1$.
Thus we obtain the first equality in Proposition~\ref{RA2} (2).
Remaining five equalities can be obtained in the same way.
\end{proof}

\medskip

\begin{rem}\label{Rem_const}
(1) This proposition is essentially due to Weiss \cite{W} and 
Thomas \cite{T}.
In the following, we call six equalities in (2) of Proposition~\ref{RA2} the 
\textit{inverse formula} of the Gauss equation.
It is easy to see that this inverse formula can be expressed in the following unified manner:
$$
\alpha_{ip} = \frac{1}{|A|} \begin{vmatrix}
R_{ijpq} & R_{ijpr} \\
R_{ikpq} & R_{ikpr} 
\end{vmatrix}
= \frac{\varepsilon}{\sqrt{|\widetilde{R}|}}
\begin{vmatrix}
R_{ijpq} & R_{ijpr} \\
R_{ikpq} & R_{ikpr} 
\end{vmatrix}, 
$$
where $(i,j,k)$ and $(p,q,r)$ are cyclic permutations of $(1,2,3)$.
We once emphasize that to prove Proposition~\ref{RA2} we only use the pointwise 
Gauss equation.
The existence of isometric embedding itself or the Codazzi equation is unnecessary as 
our assumption.

(2) In case the ambient space $\mathbb{R}^4$ is replaced by the $4$-dimensional space of 
constant curvature $c$, the similar inequality as Theorem~\ref{WTR} (1) holds.
In this case the Gauss equation is expressed as 
$$
R_{ijkl} - c(\delta_{ik}\delta_{jl}-\delta_{il}\delta_{jk}) = 
\alpha_{ik}\alpha_{jl}-\alpha_{il}\alpha_{jk},
$$
where $\delta_{ij}$ is the Kronecker symbol.
Thus, as in the above,  we can show the inequality 
\begin{align*}
\begin{vmatrix}
R_{1212}-c & R_{1213} & R_{1223}\\
R_{1213} & R_{1313}-c & R_{1323}\\
R_{1223} & R_{1323} & R_{2323}-c
\end{vmatrix} \geq 0, 
\end{align*}
which gives an obstruction to the existence of local isometric embedding of $M$ into 
the $4$-dimensional space of constant curvature $c$.
See also Remark~\ref{subst_exchange} (4).

(3) 
Weiss \cite{W} and Thomas \cite{T} considered the inverse formula of the Gauss equation for 
general dimensional case, not only restricted to the case ${\rm dim} \: M = 3$.
Assume that the determinants 
$$
|\widetilde{R}_{ijk,pqr}|:=
\begin{vmatrix} 
R_{ijpq} & R_{ijpr} & R_{ijqr} \\
R_{ikpq} & R_{ikpr} & R_{ikqr} \\
R_{jkpq} & R_{jkpr} & R_{jkqr} 
\end{vmatrix}
$$
do not vanish for all distinct indices $(i,j,k)$ and $(p,q,r)$, where $i,j,k,p,q,r$ move from $1$ to 
${\rm dim} \: M (\geq 3)$.
Then, if the Gauss equation admits a solution, we have $|\widetilde{R}_{ijk,pqr}|>0$, 
and the equality 
$$
\alpha_{ip}{}^2 = \frac{
\begin{vmatrix}
R_{ijpq} & R_{ijpr} \\ R_{ikpq} & R_{ikpr} \end{vmatrix}^2
}{|\widetilde{R}_{ijk,pqr}|}
$$
holds as in the case of ${\rm dim}\: M = 3$.
Thus, under the generic assumption $|\widetilde{R}_{ijk,pqr}| \neq 0$,  we can determine 
the value of $\alpha_{ip}$ up to  sign from the curvature.
But in case ${\rm dim} \: M \geq 4$, we can select different indices to get the value of 
$\alpha_{ip}$ such as 
$$
\alpha_{12}{}^2 =\frac{
\begin{vmatrix}
R_{1223} & R_{1221} \\ R_{1323} & R_{1321} \end{vmatrix}^2
}{|\widetilde{R}_{123,231}|}
=\frac{
\begin{vmatrix}
R_{1223} & R_{1224} \\ R_{1323} & R_{1324} \end{vmatrix}^2
}{|\widetilde{R}_{123,234}|}.
$$
Sometimes there may occur the case where the curvature gives different values 
$\alpha_{ip}^2$.
Clearly it is a contradiction, and hence the Gauss equation does not admit a solution in 
such a case.

In case ${\rm dim}\: M \geq 4$, we can theoretically determine whether the Gauss equation 
admits a solution or not in the following way, assuming $|\widetilde{R}_{ijk,pqr}|>0$.
At first, for each pair of indices $(i,p)$ with $i \leq p$, we fix the value of $\alpha_{ip}$ by 
using the above formula on $\alpha_{ip}{}^2$.
(Here, we choose different indices $j, k (\neq i)$ and $q, r (\neq p)$ and select the sign of 
$\alpha_{ip}$ arbitrarily.) 
Then we can easily verify whether such $\alpha_{ip}$'s satisfy the Gauss equation or not, 
by substituting $\alpha_{ip}$ into the Gauss equation.
If they satisfy the Gauss equation, then it gives a solution.
If not, then we change the sign of some $\alpha_{ip}$'s and examine again whether it is a solution 
or not.
We continue this procedure untill we arrive at a solution, or exhaust all combination of signs 
of $\alpha_{ip}$'s.
This procedure will end after some examinations since the combination of signs are 
finite in number.
In this way we can finally determine whether the Gauss equation admits a solution or not.
But this procedure certainly contains many redundant examinations, and it is desirable to 
obtain more efficient method to determine the solvability of the Gauss equation in case 
${\rm dim}\: M \geq 4$.

\end{rem}

\medskip

\subsection{Symbolic method}

To prove Theorem~\ref{WTR} (2), we revive an old tool called the $\lq\lq$symbolic 
method'', which was introduced in the field of classical invariant theory in the 
nineteenth century.
(For details on the symbolic method, see Dolgachev \cite{Dol}, Gurevich \cite{Gu}, 
Olver \cite{O}, and references therein.
For its history, see Friedman \cite{F}.)
We first express Rivertz's six equalities $r_1 = \cdots = r_6=0$ in terms 
of symbols (Proposition~\ref{Prop_Riv}).
We define a quadratic polynomial $RS(x_1,x_2,x_3)$ on $x_1,x_2,x_3$ by 
$$
RS(x_1,x_2,x_3) = 
\begin{vmatrix} 
R_{12} & \overline{R}_{12} & S_{12} \\
R_{13} & \overline{R}_{13} & S_{13} \\
R_{23} & \overline{R}_{23} & S_{23} \\
\end{vmatrix}
\begin{vmatrix} 
R_{12} & \overline{R}_{12} & (x \wedge S)_{12} \\
R_{13} & \overline{R}_{13} & (x \wedge S)_{13} \\
R_{23} & \overline{R}_{23} & (x \wedge S)_{23} \\
\end{vmatrix}
(x_1S_{23}  -x_2S_{13}+x_3S_{12}), 
$$
where 
$$
(x \wedge S)_{ij} = x_i S_j - x_j S_i.
$$
The coefficients of $x_ix_j$ in $RS(x_1,x_2,x_3)$ are expressed as a product 
of $\lq\lq$symbols'' $R_{ij}$, 
$\overline{R}_{ij}$, $S_{ij}$ and $S_i$.
In calculating the products, we introduce the rules 
\begin{align*}
& R_{ij}R_{kl}=\overline{R}_{ij} \overline{R}_{kl} =R_{ijkl}, \\
& S_{ij}S_{kl}S_m = S_{ijklm},
\end{align*}
where $R_{ijkl}$ and $S_{ijklm}$ are the curvature tensor and its covariant derivative, 
respectively.
On account of the properties 
$$
R_{ijkl} = R_{klij}, \quad S_{ijklm} = S_{klijm},
$$
these rules are applicable without any confusion.
For example, in the expression of $RS(x_1,x_2,x_3)$, the product of the diagonal components 
of two determinants is equal to 
$$
R_{12} \overline{R}_{13}S_{23} \cdot R_{12} \overline{R}_{13} (x \wedge S)_{23}
= R_{1212}R_{1313}S_{23}(x_2S_3-x_3S_2), 
$$
and by taking the product with $x_1S_{23}$, it becomes 
$$
R_{1212}R_{1313}S_{23233}\,x_1x_2 - R_{1212}R_{1313}S_{23232}\,x_1x_3.
$$
In this way, we know that the coefficients of $x_ix_j$ in $RS(x_1,x_2,x_3)$ are restored to 
homogeneous polynomials of $R_{ijkl}$ and $S_{ijklm}$ with degrees two and one, respectively.

In the following we show that these coefficients correspond to $r_1 \sim r_6$ 
(Proposition~\ref{Prop_Riv}).
But before describing the precise statement on $RS(x_1,x_2,x_3)$, we further explain 
several facts and remarks on symbols for later use.
We note that the symbols which we use in this paper may look different from 
the classical ones, though they are essentially the same.
First of all, we mention that the single symbols such as $R_{12}$, 
$\overline{R}_{13}$, $S_{23}$, 
$S_1$ are merely symbols, which do not possess any geometric meaning by themselves.
Symbols must be always paired with their symbolic partners which is determined uniquely 
in each monomial.
On account of this property, we can safely and uniquely restore the product of symbols to the 
polynomial as in the following way:
\begin{align*}
& (R_{12}S_{23}-R_{13}S_{12})^2S_3 \\
= & R_{12}{}^2S_{23}{}^2S_3-2R_{12}R_{13}S_{23}S_{12}S_3
+R_{13}{}^2S_{12}{}^2S_3 \\
= & R_{1212}S_{23233}-2R_{1213}S_{12233}+R_{1313}S_{12123}.
\end{align*}
However, we cannot restore the expressions such as 
$$
R_{12}S_{12}S_{13}S_3, \quad R_{12}R_{23}S_{12}S_{13}, \quad 
R_{12}R_{13}\overline{R}_{23}
$$
to polynomials, since some partners are lacking for each case.
Similarly, we cannot restore the expression $R_{12}R_{23}S_{12}S_{13}S_{23}S_1$ to a 
polynomial, since one $S_{ij}$ is abundant in this case.

During symbolic calculations, we can treat symbols as if they are polynomials.
But actually, symbols are not polynomials, because we cannot substitute scalar values into them.
For example, if the symbol $R_{12}$ takes a real value, we have $R_{1212} = 
R_{12}{}^2 \geq 0$, which is not the case in general.
Similarly, if $R_{12}$ and $R_{13}$ take scalar values $a$, $b$, respectively, we have 
$R_{1212} = R_{12}{}^2=a^2$, $R_{1213}=R_{12}R_{13}=ab$ and $R_{1313}=R_{13}{}^2=b^2$, 
which imply that the value of the polynomial $R_{1212}R_{1313} - R_{1213}{}^2$ is 
always zero.
This is also not the case.
However, unless we substitute scalar values into symbols, we may treat them as if they 
are usual polynomials.

In our case, we further impose the following relations 
\begin{align*}
& R_{ij} = -R_{ji}, \quad \overline{R}_{ij} = -\overline{R}_{ji}, \\
& S_{ij} = -S_{ji}, \quad S_{ij}S_k+S_{jk}S_i+S_{ki}S_j=0
\end{align*}
on symbols in order to inherit the properties 
$$
R_{ijkl} = -R_{jikl} = -R_{ijlk}, \quad 
S_{ijklm} = -S_{jiklm} = -S_{ijlkm}, \\
$$
and the second Bianchi identity 
\begin{align*}
& S_{ijklm}+S_{ijlmk}+S_{ijmkl}=0.
\end{align*}
For example, on account of the relation $S_{12}S_3+S_{23}S_1+S_{31}S_2=0$, the second 
Bianchi identity 
$$
S_{ij}(S_{12}S_3+S_{23}S_1+S_{31}S_2)= S_{ij123}+S_{ij231}+S_{ij312}=0 
$$
follows.
(Note that the first Bianchi identity $R_{ijkl}+R_{iklj}+R_{iljk} = 0$ 
becomes a trivial relation in case ${\rm dim}\: M = 3$.)

We add one more remark.
In the above explanation, we say that symbols must be always paired with their symbolic 
partners.
But sometimes we are allowed to calculate such as 
\begin{align*}
& 2S_{12}-3S_{12}+S_{12}=0, \\
& (2S_{12}-3S_{12}+S_{12})S_{13}=0
\end{align*}
symbolically, under the premise that the remaining partners are multiplied at the 
final stage of calculations.
Actually, these equalities themselves possess no geometric meaning, but after taking products 
with the remaining partners $S_{ij}S_k$ and $S_{l}$, respectively, they revive the meaning
\begin{align*}
& 2S_{12ijk}-3S_{12ijk}+S_{12ijk}=0, \\
& 2S_{1213l}-3S_{1213l}+S_{1213l}=0
\end{align*}
as polynomial equalities.
For more remarks on symbolic calculations, we will explain them where they are necessary.

Now we return to the property of the polynomial $RS(x_1,x_2,x_3)$.
We prove the following proposition.

\begin{prop}\label{Prop_Riv}
The equality 
$$
\frac{1}{\,2\,} RS(x_1,x_2,x_3) = 
r_1x_3{}^2-r_2x_2x_3+r_3x_1x_3+r_4x_2{}^2-r_5x_1x_2+r_6x_1{}^2
$$
holds, where $r_1 \sim r_6$ are Rivertz's six polynomials listed up in Introduction.
In particular, Rivertz's six equalities $r_1 = \cdots = r_6=0$ are equivalent to the 
vanishing of the single polynomial $RS(x_1,x_2,x_3)=0$.
\end{prop}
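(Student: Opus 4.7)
The plan is to exploit the fact that $RS(x_1,x_2,x_3)$ is, by construction, a quadratic form in $(x_1,x_2,x_3)$: the first $3\times 3$ determinant involves no $x_i$, the second determinant is linear in $x_i$ (its third column $(x\wedge S)_{ij} = x_i S_j - x_j S_i$ is linear), and the trailing factor $x_1 S_{23} - x_2 S_{13} + x_3 S_{12}$ is linear. Consequently the claimed identity reduces to the verification of six scalar identities, one for each monomial $x_p x_q$ ($p \leq q$), and the equivalence of $r_1 = \cdots = r_6 = 0$ with $RS(x_1,x_2,x_3) \equiv 0$ follows at once because a quadratic form on $\mathbb{R}^3$ vanishes identically iff all its coefficients vanish.

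First I would expand the second determinant by cofactor along its third column, producing six signed symbolic monomials of the shape $\pm R_{ab}\overline{R}_{cd}(x\wedge S)_{ef}$. Multiplying these by the first determinant (also expanded into six signed monomials) and by the trailing linear form yields, before simplification, a sum of symbolic monomials each built from two $R$-symbols, two $\overline{R}$-symbols, three $S_{ij}$-symbols, and one $S_k$-symbol, together with a factor $x_p x_q$. The restoration rules $R_{ij}R_{kl} = \overline{R}_{ij}\overline{R}_{kl} = R_{ijkl}$ and $S_{ij}S_{kl}S_m = S_{ijklm}$ then convert every such monomial into an honest quadratic-in-$R$, linear-in-$S$ polynomial times $x_p x_q$.

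Second, I would collect the coefficient of each $x_p x_q$. For an off-diagonal monomial such as $x_1 x_2$, two distinct terms of the expansion contribute --- one in which $x_1$ comes from the second determinant and $x_2$ from the trailing factor, and vice versa --- which is precisely the origin of the overall factor $\tfrac{1}{2}$ in the statement. Using the antisymmetries $R_{ij} = -R_{ji}$, $\overline{R}_{ij} = -\overline{R}_{ji}$, $S_{ij} = -S_{ji}$, each coefficient reduces to a linear combination of $R_{ijkl}S_{pqrst}$ whose index patterns $(kl)$ are forced to lie in $\{12,13,23\}$, exactly matching the index set appearing in Rivertz's polynomials $r_1,\ldots,r_6$. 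Where necessary, the symbolic second Bianchi identity $S_{12}S_3 + S_{23}S_1 + S_{31}S_2 = 0$ is applied to collapse redundant terms into the canonical six-summand form of each $r_k$. A column-by-column comparison with the list in the Introduction then matches the coefficients of $x_3^2, x_2 x_3, x_1 x_3, x_2^2, x_1 x_2, x_1^2$ with $r_1, -r_2, r_3, r_4, -r_5, r_6$ respectively, with the sign pattern tracked by the cofactor expansion of the trailing factor.

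The main obstacle is not conceptual but bookkeeping: the raw expansion produces on the order of a hundred symbolic monomials, and one must arrange the computation so that the correspondence with the basis of $r_1,\ldots,r_6$ is transparent rather than obscured by repeated uses of symmetries and Bianchi. The strategic choice is to expand both $3\times 3$ determinants along the columns that carry the $S$-symbols, so that the three $S_{ij},S_k$-factors in every resulting monomial immediately pair into a single $S_{ijklm}$ with the two subscripts $(kl)$ automatically belonging to $\{12,13,23\}$. Once this alignment is adopted, the identification of the six coefficients with Rivertz's six polynomials becomes a direct inspection and the proposition follows.
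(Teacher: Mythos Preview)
Your outline is correct and follows essentially the same route as the paper: expand both $3\times 3$ determinants along their $S$-columns, collect the coefficient of each monomial $x_p x_q$, restore the symbols using $R_{ij}R_{kl}=\overline{R}_{ij}\overline{R}_{kl}=R_{ijkl}$ and $S_{ij}S_{kl}S_m=S_{ijklm}$, and (for the mixed monomials) tidy up with the second Bianchi identity before matching against the listed $r_1,\dots,r_6$. The paper organises the bookkeeping by treating a general cyclic triple $(i,j,k)$ and then specialising, which is exactly the alignment you recommend.

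One small correction: your explanation of the overall factor $\tfrac{1}{2}$ is not quite right. It does \emph{not} arise from the two cross-contributions to an off-diagonal monomial. Rather, every coefficient---diagonal and off-diagonal alike---picks up a factor of $2$ from the $R/\overline{R}$ symmetry in the restoration step, via the identity
\[
\begin{vmatrix} R_{ab} & \overline{R}_{ab} \\ R_{cd} & \overline{R}_{cd} \end{vmatrix}
\begin{vmatrix} R_{pq} & \overline{R}_{pq} \\ R_{rs} & \overline{R}_{rs} \end{vmatrix}
= 2\bigl(R_{abpq}R_{cdrs} - R_{abrs}R_{cdpq}\bigr),
\]
since $R_{ab}R_{pq}\,\overline{R}_{cd}\overline{R}_{rs}$ and $R_{cd}R_{rs}\,\overline{R}_{ab}\overline{R}_{pq}$ both restore to the same product $R_{abpq}R_{cdrs}$. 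This uniform doubling is what the $\tfrac{1}{2}$ in the statement cancels. Your approach still works; you would simply discover this source of the $2$ when carrying out the expansion.
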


\begin{proof}
We assume that $(i,j,k)$ is a cyclic permutation of $(1,2,3)$ in this proof.
Then we have 
\begin{align*}
RS(x_1,x_2,x_3) = 
& \begin{vmatrix} 
R_{12} & \overline{R}_{12} & S_{12} \\
R_{23} & \overline{R}_{23} & S_{23} \\
R_{31} & \overline{R}_{31} & S_{31} \\
\end{vmatrix}
\begin{vmatrix} 
R_{12} & \overline{R}_{12} & (x \wedge S)_{12} \\
R_{23} & \overline{R}_{23} & (x \wedge S)_{23} \\
R_{31} & \overline{R}_{31} & (x \wedge S)_{31} \\
\end{vmatrix}
(x_1S_{23} +x_2S_{31}+x_3S_{12}) \\
= & \begin{vmatrix} 
R_{ij} & \overline{R}_{ij} & S_{ij} \\
R_{jk} & \overline{R}_{jk} & S_{jk} \\
R_{ki} & \overline{R}_{ki} & S_{ki} \\
\end{vmatrix}
\begin{vmatrix} 
R_{ij} & \overline{R}_{ij} & (x \wedge S)_{ij} \\
R_{jk} & \overline{R}_{jk} & (x \wedge S)_{jk} \\
R_{ki} & \overline{R}_{ki} & (x \wedge S)_{ki} \\
\end{vmatrix}
(x_iS_{jk} +x_jS_{ki}+x_kS_{ij}). 
\end{align*}
In this expression, the coefficient of $x_i{}^2$ is given by 
\begin{align*}
& \begin{vmatrix} 
R_{ij} & \overline{R}_{ij} & S_{ij} \\
R_{jk} & \overline{R}_{jk} & S_{jk} \\
R_{ki} & \overline{R}_{ki} & S_{ki} \\
\end{vmatrix}
\left\{ 
\begin{vmatrix}
R_{jk} & \overline{R}_{jk} \\
R_{ki} & \overline{R}_{ki}
\end{vmatrix} S_j - 
\begin{vmatrix}
R_{ij} & \overline{R}_{ij} \\
R_{jk} & \overline{R}_{jk}
\end{vmatrix} S_k \right\} S_{jk} \\
= & \left\{ \begin{vmatrix} 
R_{jk} & \overline{R}_{jk} \\
R_{ki} & \overline{R}_{ki} 
\end{vmatrix} S_{ij}
- \begin{vmatrix} 
R_{ij} & \overline{R}_{ij} \\
R_{ki} & \overline{R}_{ki} 
\end{vmatrix} S_{jk}
+ \begin{vmatrix} 
R_{ij} & \overline{R}_{ij} \\
R_{jk} & \overline{R}_{jk} 
\end{vmatrix} S_{ki} \right\} \\
& \times 
\left\{ 
\begin{vmatrix}
R_{jk} & \overline{R}_{jk} \\
R_{ki} & \overline{R}_{ki}
\end{vmatrix} S_j - 
\begin{vmatrix}
R_{ij} & \overline{R}_{ij} \\
R_{jk} & \overline{R}_{jk}
\end{vmatrix} S_k \right\} S_{jk}.
\end{align*}
Here, we have in general 
\begin{align*}
& \begin{vmatrix}
R_{ab} & \overline{R}_{ab} \\
R_{cd} & \overline{R}_{cd}
\end{vmatrix}
\begin{vmatrix}
R_{pq} & \overline{R}_{pq} \\
R_{rs} & \overline{R}_{rs}
\end{vmatrix} \\
= & (R_{ab} \overline{R}_{cd} - R_{cd} \overline{R}_{ab})
(R_{pq} \overline{R}_{rs} - R_{rs} \overline{R}_{pq}) \\
= & 2(R_{abpq}R_{cdrs} - R_{abrs}R_{cdpq}).
\end{align*}
By using this equality, we know that the coefficient of $x_i{}^2$ is equal to 
\begin{align*}
& 2 \{ (R_{jkjk}R_{kiki}-R_{jkki}R_{kijk})S_{ijjkj}
-(R_{jkij}R_{kijk}-R_{jkjk}R_{kiij})S_{ijjkk} \\
& -(R_{ijjk}R_{kiki}-R_{ijki}R_{kijk})S_{jkjkj}
+(R_{ijij}R_{kijk}-R_{ijjk}R_{kiij})S_{jkjkk} \\
& +(R_{ijjk}R_{jkki}-R_{ijki}R_{jkjk})S_{kijkj}
-(R_{ijij}R_{jkjk}-R_{ijjk}R_{jkij})S_{kijkk} \}.
\end{align*}
Then, by putting $i=1$, $j=2$, $k=3$, it becomes 
\begin{align*}
& 2 \{ (R_{2323}R_{3131}-R_{2331}R_{3123})S_{12232}
-(R_{2312}R_{3123}-R_{2323}R_{3112})S_{12233} \\
& -(R_{1223}R_{3131}-R_{1231}R_{3123})S_{23232}
+(R_{1212}R_{3123}-R_{1223}R_{3112})S_{23233} \\
& +(R_{1223}R_{2331}-R_{1231}R_{2323})S_{31232}
-(R_{1212}R_{2323}-R_{1223}R_{2312})S_{31233} \} \\
= & 2 \{ (R_{2323}R_{1313}-R_{1323}{}^2)S_{12232}
+(R_{1223}R_{1323}-R_{2323}R_{1213})S_{12233} \\
& -(R_{1223}R_{1313}-R_{1213}R_{1323})S_{23232}
-(R_{1212}R_{1323}-R_{1223}R_{1213})S_{23233} \\
& +(R_{1223}R_{1323}-R_{1213}R_{2323})S_{13232}
+(R_{1212}R_{2323}-R_{1223}{}^2)S_{13233} \} \\
= & 2r_6.
\end{align*}
Similarly, we can see that the coefficients of $x_2{}^2$, $x_3{}^2$ are $2r_4$, 
$2r_1$, by putting $(i,j,k) = (2,3,1)$, $(3,1,2)$, respectively.

Next, we calculate the coefficient of $x_ix_j$ in $RS(x_1,x_2,x_3)$, where $(i,j,k)$ is a 
cyclic permutation of $(1,2,3)$ again.
Then, it is equal to 
\begin{align*}
& \begin{vmatrix} 
R_{ij} & \overline{R}_{ij} & S_{ij} \\
R_{jk} & \overline{R}_{jk} & S_{jk} \\
R_{ki} & \overline{R}_{ki} & S_{ki} \\
\end{vmatrix}
\left\{ 
\begin{vmatrix}
R_{jk} & \overline{R}_{jk} \\
R_{ki} & \overline{R}_{ki}
\end{vmatrix} S_j - 
\begin{vmatrix}
R_{ij} & \overline{R}_{ij} \\
R_{jk} & \overline{R}_{jk}
\end{vmatrix} S_k \right\} S_{ki} \\
+ & \begin{vmatrix} 
R_{ij} & \overline{R}_{ij} & S_{ij} \\
R_{jk} & \overline{R}_{jk} & S_{jk} \\
R_{ki} & \overline{R}_{ki} & S_{ki} \\
\end{vmatrix}
\left\{ 
- \begin{vmatrix}
R_{jk} & \overline{R}_{jk} \\
R_{ki} & \overline{R}_{ki}
\end{vmatrix} S_i - 
\begin{vmatrix}
R_{ij} & \overline{R}_{ij} \\
R_{ki} & \overline{R}_{ki}
\end{vmatrix} S_k \right\} S_{jk}.
\end{align*}
As above, we restore this symbolic expression to a polynomial form.
Then we have 
\begin{align*}
& \left\{ \begin{vmatrix} 
R_{jk} & \overline{R}_{jk} \\
R_{ki} & \overline{R}_{ki} 
\end{vmatrix} S_{ij}
- \begin{vmatrix} 
R_{ij} & \overline{R}_{ij} \\
R_{ki} & \overline{R}_{ki} 
\end{vmatrix} S_{jk}
+ \begin{vmatrix} 
R_{ij} & \overline{R}_{ij} \\
R_{jk} & \overline{R}_{jk} 
\end{vmatrix} S_{ki} \right\} \\
& \times 
\left\{ 
\begin{vmatrix}
R_{jk} & \overline{R}_{jk} \\
R_{ki} & \overline{R}_{ki}
\end{vmatrix} S_{ki}S_j - 
\begin{vmatrix}
R_{ij} & \overline{R}_{ij} \\
R_{jk} & \overline{R}_{jk}
\end{vmatrix} S_{ki}S_k 
- \begin{vmatrix}
R_{jk} & \overline{R}_{jk} \\
R_{ki} & \overline{R}_{ki}
\end{vmatrix} S_{jk}S_i - 
\begin{vmatrix}
R_{ij} & \overline{R}_{ij} \\
R_{ki} & \overline{R}_{ki}
\end{vmatrix} S_{jk}S_k \right\} \\
\rule{0cm}{0.4cm} =& 2 \{ (R_{jkjk}R_{kiki}-R_{jkki}R_{kijk})S_{ijkij}
 - (R_{jkij}R_{kijk}-R_{jkjk}R_{kiij})S_{ijkik} \\
& - (R_{jkjk}R_{kiki}-R_{jkki}R_{kijk})S_{ijjki} 
 - (R_{jkij}R_{kiki}-R_{jkki}R_{kiij})S_{ijjkk} \\
& - (R_{ijjk}R_{kiki}-R_{ijki}R_{kijk})S_{jkkij}
+ (R_{ijij}R_{kijk}-R_{ijjk}R_{kiij})S_{jkkik} \\
& + (R_{ijjk}R_{kiki}-R_{ijki}R_{kijk})S_{jkjki}
+ (R_{ijij}R_{kiki}-R_{ijki}R_{kiij})S_{jkjkk} \\
& + (R_{ijjk}R_{jkki}-R_{ijki}R_{jkjk})S_{kikij}
 - (R_{ijij}R_{jkjk}-R_{ijjk}R_{jkij})S_{kikik} \\
& - (R_{ijjk}R_{jkki}-R_{ijki}R_{jkjk})S_{kijki} 
 - (R_{ijij}R_{jkki}-R_{ijki}R_{jkij})S_{kijkk} \}.
 \end{align*}
The term $(R_{ijij}R_{jkki}-R_{ijki}R_{ijjk})S_{jkkik}$ appears twice in this expression, 
and they are canceled.
Thus the coefficient becomes 
\begin{align*}
& 2 \{ (R_{jkjk}R_{kiki}-R_{jkki}R_{kijk})S_{ijkij}
 - (R_{jkij}R_{kijk}-R_{jkjk}R_{kiij})S_{ijkik} \\
& - (R_{jkjk}R_{kiki}-R_{jkki}R_{kijk})S_{ijjki} 
 - (R_{jkij}R_{kiki}-R_{jkki}R_{kiij})S_{ijjkk} \\
& - (R_{ijjk}R_{kiki}-R_{ijki}R_{kijk})S_{jkkij} 
+ (R_{ijjk}R_{kiki}-R_{ijki}R_{kijk})S_{jkjki} \\
& + (R_{ijij}R_{kiki}-R_{ijki}R_{kiij})S_{jkjkk}
+ (R_{ijjk}R_{jkki}-R_{ijki}R_{jkjk})S_{kikij} \\
&  - (R_{ijij}R_{jkjk}-R_{ijjk}R_{jkij})S_{kikik}
- (R_{ijjk}R_{jkki}-R_{ijki}R_{jkjk})S_{kijki} \}.
\end{align*}
But this polynomial, consisting of twenty monomials, is a little lengthy.
In order to shorten its length, we modify it by adding the following 
polynomial, which is actually zero on account of the second Bianchi identity 
\begin{align*}
& 2 \{ (R_{ijjk}R_{jkki}-R_{ijki}R_{jkjk})
(S_{kiijk}+S_{kijki}+S_{kikij}) \\
& +(R_{jkij}R_{kiki}-R_{jkki}R_{kiij})
(S_{jkijk}+S_{jkjki}+S_{jkkij}) \}.
\end{align*}
Then, finally  we obtain the polynomial 
\begin{align*}
& 2 \{ (R_{jkjk}R_{kiki}-R_{jkki}R_{kijk})S_{ijkij} 
- (R_{jkjk}R_{kiki}-R_{jkki}R_{kijk})S_{ijjki} \\
& + 2(R_{ijjk}R_{kiki}-R_{ijki}R_{kijk})S_{jkjki} 
+ (R_{ijij}R_{kiki}-R_{ijki}R_{kiij})S_{jkjkk} \\
& + 2(R_{ijjk}R_{jkki}-R_{ijki}R_{jkjk})S_{kikij} 
 - (R_{ijij}R_{jkjk}-R_{ijjk}R_{jkij})S_{kikik} \}.
\end{align*}
By putting $i=1$, $j=2$, $k=3$ for example, it becomes 
\begin{align*}
& 2 \{ (R_{2323}R_{3131}-R_{2331}R_{3123})S_{12312} 
- (R_{2323}R_{3131}-R_{2331}R_{3123})S_{12231} \\
& + 2(R_{1223}R_{3131}-R_{1231}R_{3123})S_{23231} 
+ (R_{1212}R_{3131}-R_{1231}R_{3112})S_{23233} \\
& + 2(R_{1223}R_{2331}-R_{1231}R_{2323})S_{31312} 
 - (R_{1212}R_{2323}-R_{1223}R_{2312})S_{31313} \} \\
= & 2 \{ -(R_{2323}R_{1313}-R_{1323}{}^2)S_{12132} 
- (R_{2323}R_{1313}-R_{1323}{}^2)S_{12231} \\
& + 2(R_{1223}R_{1313}-R_{1213}R_{1323})S_{23231} 
+ (R_{1212}R_{1313}-R_{1213}{}^2)S_{23233} \\
& - 2(R_{1223}R_{1323}-R_{1213}R_{2323})S_{13132} 
 - (R_{1212}R_{2323}-R_{1223}{}^2)S_{13133} \} \\
 = & -2r_5.
\end{align*}
Similarly, we can see that the coefficients of $x_2x_3$, $x_3x_1$ are $-2r_2$, 
$2r_3$, respectively.
\end{proof}

The equality in Proposition~\ref{Prop_Riv} shows the power of the 
symbolic expression, since the set of six 
lengthy polynomials are condensed to a simple factorized form of $RS(x_1,x_2,x_3)$.

\begin{rem}\label{Rem_length}
We can rewrite the coefficient of $x_i{}^2$ in $\frac{1}{2} RS(x_1,x_2,x_3)$ in a simple 
determinantal form 
$$
\begin{vmatrix}
R_{ijjk} & R_{ijki} & S_{ijjkj} \\
R_{jkjk} & R_{jkki} & S_{jkjkj} \\
R_{kijk} & R_{kiki} & S_{kijkj}
\end{vmatrix}
- \begin{vmatrix}
R_{ijij} & R_{ijjk} & S_{ijjkk} \\
R_{jkij} & R_{jkjk} & S_{jkjkk} \\
R_{kiij} & R_{kijk} & S_{kijkk}
\end{vmatrix},
$$
where $(i,j,k)$ is a cyclic permutation of $(1,2,3)$.
Similarly, the coefficient of $x_ix_j$ in $\frac{1}{2} RS(x_1,x_2,x_3)$ can be expressed 
as 
\begin{align*}
& \begin{vmatrix}
R_{ijjk} & R_{ijki} & S_{ijkij} \\
R_{jkjk} & R_{jkki} & S_{jkkij} \\
R_{kijk} & R_{kiki} & S_{kikij}
\end{vmatrix}
- 
\begin{vmatrix}
R_{ijij} & R_{ijjk} & S_{ijkik} \\
R_{jkij} & R_{jkjk} & S_{jkkik} \\
R_{kiij} & R_{kijk} & S_{kikik}
\end{vmatrix} \\
& \rule{0cm}{1cm} + \begin{vmatrix}
R_{ijki} & R_{ijij} & S_{ijjkk} \\
R_{jkki} & R_{jkij} & S_{jkjkk} \\
R_{kiki} & R_{kiij} & S_{kijkk}
\end{vmatrix}
- 
\begin{vmatrix}
R_{ijjk} & R_{ijki} & S_{ijjki} \\
R_{jkjk} & R_{jkki} & S_{jkjki} \\
R_{kijk} & R_{kiki} & S_{kijki}
\end{vmatrix}.
\end{align*}
After the modification by the second Bianchi identity given in the above proof, it becomes
$$
\begin{vmatrix}
R_{ijjk} & R_{ijki} & S_{ijkij}-S_{ijjki} \\
R_{jkjk} & R_{jkki} & -2S_{jkjki} \\
R_{kijk} & R_{kiki} & 2S_{kikij}
\end{vmatrix}
+\begin{vmatrix}
R_{ijij} & R_{ijki} \\
R_{kiij} & R_{kiki} 
\end{vmatrix} S_{jkjkk}
-\begin{vmatrix}
R_{ijij} & R_{ijjk} \\
R_{jkij} & R_{jkjk} 
\end{vmatrix} S_{kikik}.
$$ 
The lengths of the expressions of $r_2$, $r_3$, $r_5$ are shortened by this modification, 
though they lost their symmetries, as can be seen from this expression.
\end{rem}

\medskip

\subsection{Proof of Theorem~\ref{WTR} (2)}
Under these preliminaries, we now prove Theorem~\ref{WTR} (2).
We have only to substitute the Gauss equation and the derived Gauss 
equation 
\begin{align*}
& R_{ijkl} = \alpha_{ik}\alpha_{jl}-\alpha_{il}\alpha_{jk}, \\
& S_{ijklm} = \beta_{ikm}\alpha_{jl}+\alpha_{ik}\beta_{jlm}-\beta_{ilm}\alpha_{jk}-
\alpha_{il}\beta_{jkm}
\end{align*}
into Rivertz's six polynomials $r_1 \sim r_6$, and show that they become zero.
But actually, it is hard to verify it by hand calculations, since these six polynomials are too 
lengthy.
To avoid such hard labor, we once use the symbolic method.
Namely, we substitute $R_{ijkl}$ and $S_{ijklm}$ into $RS(x_1,x_2,x_3)$ symbolically, 
and show that it becomes zero.
For this purpose, we prepare one more lemma in advance.

Let $\rule{0cm}{0.5cm} \alpha_i$, $\alpha_i^{(1)}$, $\alpha_i^{(2)}$, $\alpha_i^{(3)}$, 
$\alpha_i^{(4)}$ be symbols satisfying 
$$
\alpha_i \alpha_j = \alpha_i^{(1)} \alpha_j^{(1)} = \alpha_i^{(2)} \alpha_j^{(2)} =
\alpha_i^{(3)} \alpha_j^{(3)} = \alpha_i^{(4)} \alpha_j^{(4)} = \alpha_{ij}.
$$
Since $\alpha_{ij}$ is symmetric, we can safely restore the symbols to polynomials without 
considering the order of products.
For example, we have 
$$
\alpha_1^{(1)}\alpha_2^{(3)}\alpha_3^{(2)}\alpha_2^{(2)}\alpha_1^{(1)}\alpha_1^{(3)} 
= \alpha_{11}\alpha_{12}\alpha_{23}.
$$
But the products such as $\alpha_1^{(2)} \alpha_2^{(3)} \alpha_1^{(2)} \alpha_3^{(2)}$, 
$\alpha_1 \alpha_2^{(4)}$ have no meaning.
Under these notations, we prove the following lemma.

\begin{lem}\label{Lem_det_id}
Let $x=(x_1,x_2,x_3)$, $y=(y_1,y_2,y_3)$, $z=(z_1,z_2,z_3)$, $w=(w_1,w_2,w_3)$ be 
$3$-dimensional vectors.
Then the following identity holds 
\begin{align*}
& \begin{vmatrix} 
(\alpha^{(1)} \wedge \alpha^{(2)})_{12} & (\alpha^{(3)} \wedge \alpha^{(4)})_{12} 
& (x \wedge y)_{12} \\
\rule{0cm}{0.4cm} (\alpha^{(1)} \wedge \alpha^{(2)})_{13} & (\alpha^{(3)} 
\wedge \alpha^{(4)})_{13} & (x \wedge y)_{13} \\
\rule{0cm}{0.4cm} (\alpha^{(1)} \wedge \alpha^{(2)})_{23} & (\alpha^{(3)} 
\wedge \alpha^{(4)})_{23} & (x \wedge y)_{23} 
\end{vmatrix} 
\begin{vmatrix} 
(\alpha^{(1)} \wedge \alpha^{(2)})_{12} & (\alpha^{(3)} \wedge \alpha^{(4)})_{12} 
& (z \wedge w)_{12} \\
\rule{0cm}{0.4cm} (\alpha^{(1)} \wedge \alpha^{(2)})_{13} & (\alpha^{(3)} 
\wedge \alpha^{(4)})_{13} & (z \wedge w)_{13} \\
\rule{0cm}{0.4cm} (\alpha^{(1)} \wedge \alpha^{(2)})_{23} & (\alpha^{(3)} 
\wedge \alpha^{(4)})_{23} & (z \wedge w)_{23}
\end{vmatrix} \\
& = 8 |A| \begin{vmatrix} 
\alpha_1 & x_1 & y_1 \\
\alpha_2 & x_2 & y_2 \\
\alpha_3 & x_3 & y_3
\end{vmatrix}
\begin{vmatrix} 
\alpha_1 & z_1 & w_1 \\
\alpha_2 & z_2 & w_2 \\
\alpha_3 & z_3 & w_3
\end{vmatrix}.
\end{align*}
\end{lem}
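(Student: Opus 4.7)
The plan is to reduce both sides of the identity to expressions involving the adjugate matrix $\operatorname{adj}(A)$, exploiting the standard isomorphism between $\Lambda^{2}\mathbb{R}^{3}$ and $\mathbb{R}^{3}$. First I would observe that for any vectors $a,b\in\mathbb{R}^{3}$ the triple $((a\wedge b)_{12},(a\wedge b)_{13},(a\wedge b)_{23})^{T}$ equals $(\tilde a_{3},-\tilde a_{2},\tilde a_{1})^{T}$ where $\tilde a=a\times b$; after one row swap and one row sign-change, each of the two $3\times 3$ determinants on the left-hand side becomes a cross-product determinant:
\[
\det M_{1}=\det\bigl(\alpha^{(1)}\times\alpha^{(2)},\,\alpha^{(3)}\times\alpha^{(4)},\,x\times y\bigr),
\]
and similarly $\det M_{2}$ with $z\times w$ in the last column. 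Next I would apply the Lagrange identity
\[
\det(a\times b,c\times d,e\times f)=\det(a,b,e)\det(c,d,f)-\det(a,b,f)\det(c,d,e),
\]
which follows from $(c\times d)\times(e\times f)=\det(c,d,f)\,e-\det(c,d,e)\,f$; this rewrites each factor as a difference of two products of ordinary triple determinants, one in $\alpha^{(1)},\alpha^{(2)}$ and one in $\alpha^{(3)},\alpha^{(4)}$.

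Multiplying the two expanded expressions produces four terms; in every one of them the symbols $\alpha^{(1)},\alpha^{(2)}$ appear together in a matched pair of determinants (one containing $x$ or $y$, the other containing $z$ or $w$) and symmetrically for $\alpha^{(3)},\alpha^{(4)}$, so each superscript is used exactly twice and the restoration rule $\alpha^{(k)}_{i}\alpha^{(k)}_{j}=\alpha_{ij}$ applies cleanly. The central symbolic identity I need is
\[
\det(\alpha^{(k)},\alpha^{(l)},p)\det(\alpha^{(k)},\alpha^{(l)},q)=2\,p^{T}\operatorname{adj}(A)\,q\qquad(k\neq l),
\]
valid for arbitrary vectors $p,q$. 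I would derive it by using Cauchy--Binet to rewrite the left-hand side as the $3\times 3$ Gram determinant $\det[(\alpha^{(k)},\alpha^{(l)},p)^{T}(\alpha^{(k)},\alpha^{(l)},q)]$, expanding that determinant, and restoring using $\alpha^{(k)}\!\cdot\alpha^{(k)}=\operatorname{tr}(A)$, $(\alpha^{(k)}\!\cdot\alpha^{(l)})^{2}=\operatorname{tr}(A^{2})$, together with the identifications of the remaining six inner-product monomials as $p\cdot q$, $p^{T}Aq$, and $p^{T}A^{2}q$. The result collects to $2\,p^{T}[A^{2}-\operatorname{tr}(A)A+e_{2}(A)I]\,q$ with $e_{2}(A)=\tfrac{1}{2}[\operatorname{tr}(A)^{2}-\operatorname{tr}(A^{2})]$, and by Cayley--Hamilton for $3\times 3$ matrices the bracket equals $\operatorname{adj}(A)$. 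Summing the four terms of the expansion gives
\[
\det M_{1}\det M_{2}=8\bigl[(x^{T}\operatorname{adj}(A)z)(y^{T}\operatorname{adj}(A)w)-(x^{T}\operatorname{adj}(A)w)(y^{T}\operatorname{adj}(A)z)\bigr].
\]

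For the right-hand side a direct symbolic computation using $\det(\alpha,x,y)=\alpha\cdot(x\times y)$ together with $\alpha_{i}\alpha_{j}=\alpha_{ij}$ yields $\det(\alpha,x,y)\det(\alpha,z,w)=(x\times y)^{T}A(z\times w)$. It then remains to verify the purely linear-algebraic identity
\[
|A|\,(x\times y)^{T}A(z\times w)=(x^{T}\operatorname{adj}(A)z)(y^{T}\operatorname{adj}(A)w)-(x^{T}\operatorname{adj}(A)w)(y^{T}\operatorname{adj}(A)z),
\]
which I would obtain by applying the Lagrange identity in the reverse direction to its right-hand side to rewrite it as $(\operatorname{adj}(A)x\times\operatorname{adj}(A)y)\cdot(z\times w)$, and then invoking the classical relation $(Mu)\times(Mv)=\operatorname{adj}(M)^{T}(u\times v)$ with $M=\operatorname{adj}(A)$ together with the $3\times 3$ identity $\operatorname{adj}(\operatorname{adj}(A))=|A|A$ (and the symmetry of $\operatorname{adj}(A)$). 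The main obstacle I expect is the Cayley--Hamilton collapse inside the central symbolic identity above: one must carefully track the six monomials produced by the Gram expansion, identify them with the coefficients of the characteristic polynomial, and recognise the adjugate hidden in $A^{2}-\operatorname{tr}(A)A+e_{2}(A)I$. Once that identification is in place, the remaining bookkeeping (signs, factors of $2$, and the final double-adjugate step) is routine linear algebra.
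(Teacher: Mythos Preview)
Your argument is correct and follows a genuinely different route from the paper's proof. The paper proceeds by extracting the coefficient of each monomial $x_jy_kz_qw_r$ on both sides: on the left it computes this coefficient as $8(R_{ijpq}R_{kirp}-R_{ijrp}R_{kipq})$ via the Gauss equation, and then invokes the \emph{inverse formula of the Gauss equation} (Proposition~\ref{RA2}~(2)) to recognise this as $8|A|\alpha_{ip}$, which visibly matches the coefficient on the right. The case $|A|=0$ is then handled separately by a density/continuity argument. Your proof, by contrast, is pure three--dimensional linear algebra: you pass to cross products, split each factor with the Lagrange/Binet--Cauchy identity, and reduce everything to the symbolic computation $\det(\alpha^{(k)},\alpha^{(l)},p)\det(\alpha^{(k)},\alpha^{(l)},q)=2\,p^{T}\!\operatorname{adj}(A)\,q$ together with the classical facts $(Mu)\times(Mv)=\operatorname{adj}(M)^{T}(u\times v)$ and $\operatorname{adj}(\operatorname{adj}A)=|A|A$. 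This avoids Proposition~\ref{RA2} entirely and, since every step is a polynomial identity in the entries of $A$, it also dispenses with the separate treatment of the singular case. The paper's approach has the advantage of keeping the curvature/Gauss--equation interpretation visible throughout, which is thematically natural here; your approach is more self--contained and exposes the adjugate structure behind the identity. One small remark: your Cayley--Hamilton derivation of the key quadratic identity is correct but somewhat heavier than necessary---the same identity follows immediately by restoring the outer product $(\alpha^{(k)}\times\alpha^{(l)})(\alpha^{(k)}\times\alpha^{(l)})^{T}$ componentwise, each entry giving exactly twice the corresponding cofactor of $A$.
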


Remind the definition of $\wedge$ defined in \S 3.2.
It is given by $(x \wedge y)_{ij} = x_iy_j-x_jy_i$ etc.

\begin{proof}
As in the proof of Proposition~\ref{RA2}, we consider $\alpha_{ij}$ as a symmetric tensor on 
a $3$-dimensional abstract vector space and $R_{ijkl}$ as a curvature like tensor, 
satisfying the Gauss equation.

We first consider the case $|A| \neq 0$.
We assume that $(i,j,k)$ and $(p,q,r)$ are cyclic permutations of $(1,2,3)$ in this proof.
Then the left hand side of the above equality is equal to 
\begin{align*}
&  \begin{vmatrix} 
(\alpha^{(1)} \wedge \alpha^{(2)})_{ij} & (\alpha^{(3)} \wedge \alpha^{(4)})_{ij} 
& (x \wedge y)_{ij} \\
\rule{0cm}{0.4cm} (\alpha^{(1)} \wedge \alpha^{(2)})_{jk} & (\alpha^{(3)} 
\wedge \alpha^{(4)})_{jk} & (x \wedge y)_{jk} \\
\rule{0cm}{0.4cm} (\alpha^{(1)} \wedge \alpha^{(2)})_{ki} & (\alpha^{(3)} 
\wedge \alpha^{(4)})_{ki} & (x \wedge y)_{ki} 
\end{vmatrix} 
\begin{vmatrix} 
(\alpha^{(1)} \wedge \alpha^{(2)})_{pq} & (\alpha^{(3)} \wedge \alpha^{(4)})_{pq} 
& (z \wedge w)_{pq} \\
\rule{0cm}{0.4cm} (\alpha^{(1)} \wedge \alpha^{(2)})_{qr} & (\alpha^{(3)} 
\wedge \alpha^{(4)})_{qr} & (z \wedge w)_{qr} \\
\rule{0cm}{0.4cm} (\alpha^{(1)} \wedge \alpha^{(2)})_{rp} & (\alpha^{(3)} 
\wedge \alpha^{(4)})_{rp} & (z \wedge w)_{rp}
\end{vmatrix}.
\end{align*}
We take out the coefficient of $x_jy_kz_qw_r$.
Then it is equal to 
\begin{align*}
& \begin{vmatrix}
(\alpha^{(1)} \wedge \alpha^{(2)})_{ij} & (\alpha^{(3)} \wedge \alpha^{(4)})_{ij} \\
\rule{0cm}{0.4cm} (\alpha^{(1)} \wedge \alpha^{(2)})_{ki} & (\alpha^{(3)} 
\wedge \alpha^{(4)})_{ki} 
\end{vmatrix} 
\begin{vmatrix} 
(\alpha^{(1)} \wedge \alpha^{(2)})_{pq} & (\alpha^{(3)} \wedge \alpha^{(4)})_{pq}  \\
\rule{0cm}{0.4cm} (\alpha^{(1)} \wedge \alpha^{(2)})_{rp} & (\alpha^{(3)} 
\wedge \alpha^{(4)})_{rp} 
\end{vmatrix} \\
& =\{  (\alpha_i^{(1)}\alpha_j^{(2)}-\alpha_j^{(1)}\alpha_i^{(2)})
(\alpha_k^{(3)}\alpha_i^{(4)}-\alpha_i^{(3)}\alpha_k^{(4)}) \\
& \hspace{2.5cm} -(\alpha_k^{(1)}\alpha_i^{(2)}-\alpha_i^{(1)}\alpha_k^{(2)})
(\alpha_i^{(3)}\alpha_j^{(4)}-\alpha_j^{(3)}\alpha_i^{(4)}) \}\\
& \hspace{0.5cm} \times \{ (\alpha_p^{(1)}\alpha_q^{(2)}-\alpha_q^{(1)}\alpha_p^{(2)})
(\alpha_r^{(3)}\alpha_p^{(4)}-\alpha_p^{(3)}\alpha_r^{(4)}) \\
& \hspace{2.5cm} -(\alpha_r^{(1)}\alpha_p^{(2)}-\alpha_p^{(1)}\alpha_r^{(2)})
(\alpha_p^{(3)}\alpha_q^{(4)}-\alpha_q^{(3)}\alpha_p^{(4)}) \}.
\end{align*}
We first calculate the product of top terms of both braces.
Then we have 
\begin{align*}
& (\alpha_i^{(1)}\alpha_j^{(2)}-\alpha_j^{(1)}\alpha_i^{(2)})
(\alpha_p^{(1)}\alpha_q^{(2)}-\alpha_q^{(1)}\alpha_p^{(2)}) \\
& \hspace{1cm} \times (\alpha_k^{(3)}\alpha_i^{(4)}-\alpha_i^{(3)}\alpha_k^{(4)})
(\alpha_r^{(3)}\alpha_p^{(4)}-\alpha_p^{(3)}\alpha_r^{(4)}) \\
& = 2(\alpha_{ip}\alpha_{jq}-\alpha_{iq}\alpha_{jp}) \cdot 
2(\alpha_{kr}\alpha_{ip}-\alpha_{kp}\alpha_{ir}) \\
& = 4R_{ijpq}R_{kirp}.
\end{align*}
We can similarly calculate the remaining three parts, and by summing up them, we know that  
the above coefficient is equal to 
\begin{align*}
& 4(R_{ijpq}R_{kirp} - R_{ijrp}R_{kipq} - R_{kipq}R_{ijrp} + R_{kirp}R_{ijpq}) \\
= & 8(R_{ijpq}R_{kirp} - R_{ijrp}R_{kipq}).
\end{align*}
On the other hand, from the inverse formula of the Gauss equation (Proposition~\ref{RA2} (2)  
and Remark~\ref{Rem_const} (1)), we have 
$$
\alpha_{ip} = \frac{1}{|A|} \begin{vmatrix}
R_{ijpq} & R_{ijpr} \\
R_{ikpq} & R_{ikpr} 
\end{vmatrix}.
$$
Hence, the coefficient of $x_jy_kz_qw_r$ is equal to $8|A|\alpha_{ip}=8|A|\alpha_i \alpha_p$.
In a similar way, we can calculate the remaining coefficients, and finally we know that the 
left hand side of the equality is equal to 
$$
8 |A| \begin{vmatrix} 
\alpha_i & x_i & y_i \\
\alpha_j & x_j & y_j \\
\alpha_k & x_k & y_k
\end{vmatrix}
\begin{vmatrix} 
\alpha_p & z_p & w_p \\
\alpha_q & z_q & w_q \\
\alpha_r & z_r & w_r
\end{vmatrix}
= 8 |A| \begin{vmatrix} 
\alpha_1 & x_1 & y_1 \\
\alpha_2 & x_2 & y_2 \\
\alpha_3 & x_3 & y_3
\end{vmatrix}
\begin{vmatrix} 
\alpha_1 & z_1 & w_1 \\
\alpha_2 & z_2 & w_2 \\
\alpha_3 & z_3 & w_3
\end{vmatrix}.
$$

The case $|A|=0$ can be treated in the same way as in the proof of Proposition~\ref{RA2} (1). 
For both sides of the equality, the coefficients of $x_jy_kz_qw_r$ are polynomials of 
$\alpha_{ij}$ with degree four.
Both coefficients coincide on the open dense subset consisting of non-singular symmetric 
$3$-matrices $A=(\alpha_{ij})$, and hence it follows that they coincide identically on the 
whole space, which completes the proof of Lemma~\ref{Lem_det_id}.
\end{proof}

\medskip

{\it Proof of Theorem~\ref{WTR} (2).}
We substitute the Gauss equation and the derived Gauss equation into $RS(x_1,x_2,x_3)$ 
symbolically.
As for the Gauss equation, we have 
\begin{align*}
R_{ijkl} & = R_{ij}R_{kl} \\
& = \alpha_{ik}\alpha_{jl}-\alpha_{il}\alpha_{jk}.
\end{align*}
Here, by using the symbols $\alpha_i^{(1)}$ and $\alpha_i^{(2)}$, we decompose the last 
expression in the form 
\begin{align*}
& \frac{1}{\,2\,}(\alpha_i^{(1)}\alpha_k^{(1)}\alpha_j^{(2)}\alpha_l^{(2)}
+\alpha_i^{(2)}\alpha_k^{(2)}\alpha_j^{(1)}\alpha_l^{(1)}
-\alpha_i^{(1)}\alpha_l^{(1)}\alpha_j^{(2)}\alpha_k^{(2)}
-\alpha_i^{(2)}\alpha_l^{(2)}\alpha_j^{(1)}\alpha_k^{(1)}) \\
= & \frac{1}{\,2\,} 
(\alpha_i^{(1)} \alpha_j^{(2)} - \alpha_j^{(1)}\alpha_i^{(2)})
(\alpha_k^{(1)} \alpha_l^{(2)} - \alpha_l^{(1)}\alpha_k^{(2)}) \\
= & \frac{1}{\sqrt{2}} (\alpha^{(1)} \wedge \alpha^{(2)})_{ij} \cdot 
\frac{1}{\sqrt{2}} (\alpha^{(1)} \wedge \alpha^{(2)})_{kl}.
\end{align*}
Thus, if we substitute 
$$
R_{ij} = \frac{1}{\sqrt{2}} (\alpha^{(1)} \wedge \alpha^{(2)})_{ij}
$$
symbolically into $RS(x_1,x_2,x_3)$, then by following the above calculation reversely, 
we know that the curvature $R_{ijkl} = R_{ij}R_{kl}$ is automatically replaced by 
$$
R_{ijkl} = \frac{1}{\sqrt{2}} (\alpha^{(1)} \wedge \alpha^{(2)})_{ij} \cdot 
\frac{1}{\sqrt{2}} (\alpha^{(1)} \wedge \alpha^{(2)})_{kl}
= \alpha_{ik}\alpha_{jl}-\alpha_{il}\alpha_{jk}
$$
in the final step of calculations.
Hence we have only to put 
$$
R_{ij} = \frac{1}{\sqrt{2}} (\alpha^{(1)} \wedge \alpha^{(2)})_{ij}
$$
into $RS(x_1,x_2,x_3)$ in order to substitute the Gauss equation.
(Remark that the equality 
$$
\frac{1}{\sqrt{2}} (\alpha^{(1)} \wedge \alpha^{(2)})_{ij} 
= - \frac{1}{\sqrt{2}} (\alpha^{(1)} \wedge \alpha^{(2)})_{ji}
$$
holds.
Hence, the property $R_{ij} = -R_{ji}$ is preserved, and we are allowed the above setting.)
As for $\overline{R}_{ij}$, we must use the symbols different from $\alpha_i^{(1)}$, 
$\alpha_i^{(2)}$, since both $R_{ij}$ and $\overline{R}_{ij}$ appear simultaneously 
in $RS(x_1,x_2,x_3)$.
Hence, we put 
$$
\overline{R}_{ij} = \frac{1}{\sqrt{2}} (\alpha^{(3)} \wedge \alpha^{(4)})_{ij}
$$
in the following calculations.
Substitution of the Gauss equation is carried out in this way.

Next, concerning the derived Gauss equation 
$$
S_{ijklm} = \beta_{ikm}\alpha_{jl}+\alpha_{ik}\beta_{jlm}-\beta_{ilm}\alpha_{jk}-
\alpha_{il}\beta_{jkm},
$$
we put 
$$
\beta_{ijk} = \beta_i \beta_j \beta_k, \qquad \alpha_{ij} = \alpha_i^{(5)} \alpha_j^{(5)} 
$$
symbolically.
We remark that $\beta_{ijk}$ is a symmetric $3$-tensor on account of the Codazzi 
equation, and hence we may freely change the order of products of symbols $\beta_i$, 
$\beta_j$, $\beta_k$.
Then the derived Gauss equation is decomposed as 
\begin{align*}
S_{ijklm} & = S_{ij}S_{kl}S_m \\
& = \beta_{ikm}\alpha_{jl}+\alpha_{ik}\beta_{jlm}-\beta_{ilm}\alpha_{jk}-
\alpha_{il}\beta_{jkm} \\
& = \beta_i \beta_k \beta_m \alpha_j^{(5)} \alpha_l^{(5)} 
+ \alpha_i^{(5)} \alpha_k^{(5)} \beta_j \beta_l \beta_m \\
& \hspace{1.5cm} - \beta_i \beta_l \beta_m \alpha_j^{(5)} \alpha_k^{(5)} 
- \alpha_i^{(5)} \alpha_l^{(5)} \beta_j \beta_k \beta_m \\ 
& = (\alpha_i^{(5)} \beta_j - \alpha_j^{(5)} \beta_i)
(\alpha_k^{(5)} \beta_l - \alpha_l^{(5)} \beta_k) \beta_m \\
& = (\alpha^{(5)} \wedge \beta)_{ij}(\alpha^{(5)} \wedge \beta)_{kl} \beta_m.
\end{align*}
Hence it suffices to substitute 
$$
S_{ij} = (\alpha^{(5)} \wedge \beta)_{ij} 
\quad{\rm and} \quad
S_m=\beta_m
$$
into $RS(x_1,x_2,x_3)$ by the same reason as $R_{ijkl}$.
Note that the properties $S_{ij} = -S_{ji}$ and $S_{ij}S_k+S_{jk}S_i+S_{ki}S_j=0$ 
hold in this setting.
Then we have 
\begin{align*}
& RS(x_1,x_2,x_3) = 
\begin{vmatrix} 
R_{12} & \overline{R}_{12} & S_{12} \\
R_{13} & \overline{R}_{13} & S_{13} \\
R_{23} & \overline{R}_{23} & S_{23} \\
\end{vmatrix}
\begin{vmatrix} 
R_{12} & \overline{R}_{12} & (x \wedge S)_{12} \\
R_{13} & \overline{R}_{13} & (x \wedge S)_{13} \\
R_{23} & \overline{R}_{23} & (x \wedge S)_{23} \\
\end{vmatrix}
(x_1S_{23}  -x_2S_{13}+x_3S_{12}) \\
& \rule{0cm}{1.1cm} = 
\frac{1}{\,4\,} \begin{vmatrix} 
(\alpha^{(1)} \wedge \alpha^{(2)})_{12} & (\alpha^{(3)} \wedge \alpha^{(4)})_{12} 
& (\alpha^{(5)} \wedge \beta)_{12} \\
\rule{0cm}{0.4cm} (\alpha^{(1)} \wedge \alpha^{(2)})_{13} & (\alpha^{(3)} 
\wedge \alpha^{(4)})_{13} & (\alpha^{(5)} \wedge \beta)_{13} \\
\rule{0cm}{0.4cm} (\alpha^{(1)} \wedge \alpha^{(2)})_{23} & (\alpha^{(3)} 
\wedge \alpha^{(4)})_{23} & (\alpha^{(5)} \wedge \beta)_{23} \\
\end{vmatrix} \\
& \hspace{1.2cm} \times \begin{vmatrix} 
(\alpha^{(1)} \wedge \alpha^{(2)})_{12} & (\alpha^{(3)} \wedge \alpha^{(4)})_{12} 
& (x \wedge \beta)_{12} \\
\rule{0cm}{0.4cm} (\alpha^{(1)} \wedge \alpha^{(2)})_{13} & (\alpha^{(3)} 
\wedge \alpha^{(4)})_{13} & (x \wedge \beta)_{13} \\
\rule{0cm}{0.4cm} (\alpha^{(1)} \wedge \alpha^{(2)})_{23} & (\alpha^{(3)} 
\wedge \alpha^{(4)})_{23} & (x \wedge \beta)_{23} \\
\end{vmatrix}
\begin{vmatrix}
\alpha_1^{(5)} & \beta_1 & x_1 \\
\rule{0cm}{0.49cm} \alpha_2^{(5)} & \beta_2 & x_2 \\
\rule{0cm}{0.49cm} \alpha_3^{(5)} & \beta_3 & x_3 
\end{vmatrix}.
\end{align*}
By Lemma~\ref{Lem_det_id}, it is equal to 
\begin{align*}
& 2|A| \begin{vmatrix}
\alpha_1 & \alpha_1^{(5)} & \beta_1 \\
\rule{0cm}{0.49cm} \alpha_2 & \alpha_2^{(5)} & \beta_2 \\
\rule{0cm}{0.49cm} \alpha_3 & \alpha_3^{(5)} & \beta_3
\end{vmatrix}
\begin{vmatrix}
\rule{0cm}{0.5cm} \alpha_1 & x_1 & \beta_1 \\
\rule{0cm}{0.5cm} \alpha_2 & x_2 & \beta_2 \\
\rule{0cm}{0.5cm} \alpha_3 & x_3 & \beta_3
\end{vmatrix}
\begin{vmatrix}
\alpha_1^{(5)} & \beta_1 & x_1 \\
\rule{0cm}{0.49cm} \alpha_2^{(5)} & \beta_2 & x_2 \\
\rule{0cm}{0.49cm} \alpha_3^{(5)} & \beta_3 & x_3
\end{vmatrix} \\
& =-2|A| \begin{vmatrix}
\alpha_1 & \alpha_1^{(5)} & \beta_1 \\
\rule{0cm}{0.49cm} \alpha_2 & \alpha_2^{(5)} & \beta_2 \\
\rule{0cm}{0.49cm} \alpha_3 & \alpha_3^{(5)} & \beta_3
\end{vmatrix}
\begin{vmatrix}
\rule{0cm}{0.5cm} \alpha_1 & \beta_1 & x_1 \\
\rule{0cm}{0.5cm} \alpha_2 & \beta_2 & x_2 \\
\rule{0cm}{0.5cm} \alpha_3 & \beta_3 & x_3
\end{vmatrix}
\begin{vmatrix}
\alpha_1^{(5)} & \beta_1 & x_1 \\
\rule{0cm}{0.49cm} \alpha_2^{(5)} & \beta_2 & x_2 \\
\rule{0cm}{0.49cm} \alpha_3^{(5)} & \beta_3 & x_3
\end{vmatrix}.
\end{align*}
In the last expression, we may change the role of the symbols $\alpha_i$ and 
$\alpha_i^{(5)}$, i.e., we may use the symbol $\alpha_i^{(5)}$ in Lemma~\ref{Lem_det_id} 
instead of $\alpha_i$, and also we may put $S_{ij} = (\alpha \wedge \beta)_{ij}$ instead of 
$(\alpha^{(5)} \wedge \beta)_{ij}$ in substituting to $RS(x_1,x_2,x_3)$, since we 
restore both $\alpha_i \alpha_j$ and $\alpha_i^{(5)} \alpha_j^{(5)}$ to the same polynomial 
$\alpha_{ij}$ at the final stage.
But by this exchange, we obtain two expressions with different signs on account of the term 
$$
\begin{vmatrix}
\alpha_1 & \alpha_1^{(5)} & \beta_1 \\
\rule{0cm}{0.49cm} \alpha_2 & \alpha_2^{(5)} & \beta_2 \\
\rule{0cm}{0.49cm} \alpha_3 & \alpha_3^{(5)} & \beta_3
\end{vmatrix}, 
$$
which implies that they are zero as polynomials.
Hence we have $RS(x_1,x_2,x_3)=0$ after substituting the Gauss equation and the derived 
Gauss equation.
This completes the proof of Theorem~\ref{WTR} (2).
\hfill{$\Box$}

\begin{rem}\label{subst_exchange}
(1) 
In this proof we used the property that $\beta_{ijk}$ is symmetric.
This condition is essential, i.e., without the Codazzi equation 
$\beta_{ijk} = \beta_{ikj}$, the equality $RS(x_1,x_2,x_3)=0$ does not hold in general.

(2) 
We remark that polynomials may be decomposed into symbols in several ways.
For example, during the above proof, we may decompose the polynomial 
$\alpha_{ik} \alpha_{jl}$ as 
$$
t \alpha_i^{(1)}\alpha_k^{(1)}\alpha_j^{(2)}\alpha_l^{(2)}
+(1-t) \alpha_i^{(2)}\alpha_k^{(2)}\alpha_j^{(1)}\alpha_l^{(1)}
$$
by using  some parameter $t$, instead of 
$$
\alpha_{ik} \alpha_{jl}= \frac{1}{\,2\,}(\alpha_i^{(1)}\alpha_k^{(1)}\alpha_j^{(2)}\alpha_l^{(2)}
+\alpha_i^{(2)}\alpha_k^{(2)}\alpha_j^{(1)}\alpha_l^{(1)}).
$$
In decomposing polynomials to symbols, we must find the best decomposition fitted to 
each situation.

(3) 
In the final step of the above proof, we prove the vanishing of $RS(x_1,x_2,x_3)$ by 
exchanging the role of $\alpha_i$ and $\alpha_i^{(5)}$.
This $\lq\lq$exchanging method'' is a powerful tool to show the vanishing of a 
polynomial, which is classically well known.
See for example, Gurevich \cite{Gu}, p.201.
Its simplest toy model is 
$$
\begin{vmatrix} \alpha_1^{(1)} & \alpha_1^{(2)} \\
\rule{0cm}{0.5cm} \alpha_2^{(1)} & \alpha_2^{(2)} \end{vmatrix} \alpha_1^{(1)} 
\alpha_1^{(2)} =0.
$$
Applying the exchanging method, we can see this result at a first glance without any 
calculation.
As another example, consider a symmetric $3$-tensor $a_{ijk}$.
We decompose it as $a_{ijk} = a_i^{(1)}a_j^{(1)}a_k^{(1)} 
= a_i^{(2)} a_j^{(2)} a_k^{(2)} = a_i^{(3)}  a_j^{(3)} a_k^{(3)}$ 
symbolically.
Then, by the exchanging method,  we can easily see the following equality. 
$$
\begin{vmatrix}
a_1^{(1)} & a_1^{(2)} & a_1^{(3)} \\
\rule{0cm}{0.5cm} a_2^{(1)} & a_2^{(2)} & a_2^{(3)} \\
\rule{0cm}{0.5cm} a_3^{(1)} & a_3^{(2)} & a_3^{(3)}
\end{vmatrix}^3 =0.
$$

We give more examples.
By restricting the range of indices $i,j,k=1,2$, we may consider $a_{ijk}$ as a symmetric 
$3$-tensor on $\mathbb{R}^2$, i.e., $a_{ijk} \in S^3(\mathbb{R}^2)$.
We decompose it as $a_{ijk} = a_i^{(p)} a_j^{(p)} a_k^{(p)}$ ($p=1,2,3,4$) in terms of symbols. 
Then we can easily see the equality 
$$
\begin{vmatrix} a_1^{(1)} & a_1^{(2)} \\ 
\rule{0cm}{0.5cm} a_2^{(1)} & a_2^{(2)} \end{vmatrix}
\begin{vmatrix} a_1^{(1)} & a_1^{(3)} \\ 
\rule{0cm}{0.5cm} a_2^{(1)} & a_2^{(3)} 
\end{vmatrix}
\begin{vmatrix} a_1^{(1)} & a_1^{(4)} \\ 
\rule{0cm}{0.5cm} a_2^{(1)} & a_2^{(4)} 
\end{vmatrix}
\begin{vmatrix} a_1^{(2)} & a_1^{(3)} \\ 
\rule{0cm}{0.5cm} a_2^{(2)} & a_2^{(3)} 
\end{vmatrix}
\begin{vmatrix} a_1^{(2)} & a_1^{(4)} \\ 
\rule{0cm}{0.5cm} a_2^{(2)} & a_2^{(4)} 
\end{vmatrix}
\begin{vmatrix} a_1^{(3)} & a_1^{(4)} \\ 
\rule{0cm}{0.5cm} a_2^{(3)} & a_2^{(4)} 
\end{vmatrix} =0
$$
by the exchanging method.
On the other hand, the product 
$$
-\frac{1}{\,2\,} 
\begin{vmatrix} a_1^{(1)} & a_1^{(2)} \\ 
\rule{0cm}{0.5cm} a_2^{(1)} & a_2^{(2)} 
\end{vmatrix}^2
\begin{vmatrix} a_1^{(3)} & a_1^{(4)} \\ 
\rule{0cm}{0.5cm} a_2^{(3)} & a_2^{(4)} 
\end{vmatrix}^2
\begin{vmatrix} a_1^{(1)} & a_1^{(3)} \\ 
\rule{0cm}{0.5cm} a_2^{(1)} & a_2^{(3)} 
\end{vmatrix}
\begin{vmatrix} a_1^{(2)} & a_1^{(4)} \\ 
\rule{0cm}{0.5cm} a_2^{(2)} & a_2^{(4)} 
\end{vmatrix}
$$
is non-zero, and it expresses the discriminant 
$$
a_{111}{}^2a_{222}{}^2-6a_{111}a_{112}a_{122}a_{222}+4a_{111}a_{122}{}^3
+4a_{112}{}^3a_{222}-3a_{112}{}^2a_{122}{}^2
$$
of the cubic equation $a_{111}x^3+3a_{112}x^2+3a_{122}x+a_{222}=0$ 
(cf. Olver \cite{O}, p.117).

As another example, we consider $a_{ijkl} \in S^4(\mathbb{R}^2)$, and put 
$a_{ijkl} = a_i^{(p)}a_j^{(p)}a_k^{(p)}a_l^{(p)}$ for $p=1,2,3$.
Then, 
\begin{align*}
& \frac{1}{\,6\,} 
\begin{vmatrix} a_1^{(1)} & a_1^{(2)} \\ 
\rule{0cm}{0.5cm} a_2^{(1)} & a_2^{(2)} 
\end{vmatrix}^2
\begin{vmatrix} a_1^{(1)} & a_1^{(3)} \\ 
\rule{0cm}{0.5cm} a_2^{(1)} & a_2^{(3)} 
\end{vmatrix}^2
\begin{vmatrix} a_1^{(2)} & a_1^{(3)} \\ 
\rule{0cm}{0.5cm} a_2^{(2)} & a_2^{(3)} 
\end{vmatrix}^2 \\
= & a_{1111}a_{1122}a_{2222}+2a_{1112}a_{1122}a_{1222}-a_{1111}a_{1222}{}^2
-a_{1112}{}^2a_{2222}-a_{1122}{}^3 \\
= & \begin{vmatrix}
a_{1111} & a_{1112} & a_{1122} \\
a_{1112} & a_{1122} & a_{1222} \\
a_{1122} & a_{1222} & a_{2222}
\end{vmatrix}
\end{align*}
gives an invariant, called the catalecticant of a binary quartic $a_{1111}x^4+4a_{1112}x^3
+6a_{1122}x^2+4a_{1222}x+a_{2222}$ (cf. Dolgachev \cite{Dol}, p.10).
Maybe, these are the most classical invariants expressed in terms of symbols.

Classically, symbolic method is a tool to express the invariants (and covariants) in compact 
simple forms.
In this paper, we further use it as a powerful tool to verify several identities on tensors, which 
shows a new role of symbolic method.

(4) As stated in Remark~\ref{Rem_const} (2), we consider the case where the ambient 
space is the $4$-dimensional space of constant curvature $c$.
By taking the covariant derivative of the Gauss equation 
$$
R_{ijkl} - c(\delta_{ik}\delta_{jl}-\delta_{il}\delta_{jk}) = 
\alpha_{ik}\alpha_{jl}-\alpha_{il}\alpha_{jk},
$$
we have the completely same equation 
$$
S_{ijklm} = \beta_{ikm}\alpha_{jl}+\alpha_{ik}\beta_{jlm}-\beta_{ilm}\alpha_{jk}-
\alpha_{il}\beta_{jkm}, 
$$
since the space of constant curvature is locally symmetric.
Thus, by replacing $R_{ijkl}$ in $r_1 \sim r_6$ by $R_{ijkl} - c(\delta_{ik}\delta_{jl}
-\delta_{il}\delta_{jk})$, we obtain Rivertz's six equalities for the space of 
constant curvature.
In addition, we can extend our result (Theorem~\ref{MainTh}) to the space of constant 
curvature (see Corollary~\ref{MainCor}).
\end{rem}

\bigskip

\section{Proof of the main theorem}

\medskip

In this section we give a proof of the main theorem (Theorem~\ref{MainTh}).
We already re-established Theorem~\ref{WTR}, and hence we have only to prove the 
$\lq\lq$if'' part of the main theorem, i.e., our remaining task is to prove the following 
proposition.

\begin{prop}\label{Prop_R}
Let $(M,g)$ be a $3$-dimensional simply connected Riemannian manifold.
Assume that $|\widetilde{R}|>0$ and Rivertz's six equalities $r_1=\cdots = r_6 = 0$ hold.
Then, there exists an isometric embedding $f: (M,g) \longrightarrow \mathbb{R}^4$.
\end{prop}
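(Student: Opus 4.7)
The plan is to construct a symmetric $2$-tensor $\alpha$ directly from the curvature via the inverse formula, then deduce from Rivertz's equalities that $\alpha$ also satisfies the Codazzi equation; the fundamental theorem of hypersurfaces (Theorem~\ref{FT}), applicable because $M$ is simply connected, will then deliver the isometric embedding. Since $|\widetilde{R}|>0$ everywhere, I would fix a continuous sign $\varepsilon\in\{\pm 1\}$ and define $\alpha_{ij}$ by the six equalities of Proposition~\ref{RA2}(2). Reversing the argument of Proposition~\ref{RA2}(1) shows that this $\alpha$ satisfies the Gauss equation $R_{ijkl}=\alpha_{ik}\alpha_{jl}-\alpha_{il}\alpha_{jk}$, and the matrix $A=(\alpha_{ij})$ is non-degenerate with $|A|^2=|\widetilde{R}|$. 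The entire content of the proposition therefore reduces to showing that $\beta_{ijk}:=(\nabla_{X_k}\alpha)(X_i,X_j)$, automatically symmetric in $i,j$, is in fact totally symmetric; equivalently, that the Codazzi defect $T_{ijk}:=\beta_{ijk}-\beta_{ikj}$ vanishes identically.

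Differentiating the Gauss equation gives the derived Gauss equation
$$S_{ijklm}=\beta_{ikm}\alpha_{jl}+\alpha_{ik}\beta_{jlm}-\beta_{ilm}\alpha_{jk}-\alpha_{il}\beta_{jkm},$$
with $S=\nabla R$ given and $\beta$ determined by $\alpha$. Decompose $\beta=\beta^{s}+\beta^{m}$ into its totally symmetric part $\beta^{s}$ and the mixed-symmetry remainder $\beta^{m}$, which encodes $T$. Because the derived Gauss equation is linear in $\beta$, this induces a decomposition $S=S(\beta^{s})+S(\beta^{m})$; and because the polynomial $RS(x_1,x_2,x_3)$ is linear in the $S_{ijklm}$, we obtain $RS(R,S)=RS(R,S(\beta^{s}))+RS(R,S(\beta^{m}))$. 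Theorem~\ref{WTR}(2) precisely asserts that $RS(R,S(\gamma))\equiv 0$ for any totally symmetric $\gamma$, so $RS(R,S(\beta^{s}))=0$. The hypothesis $r_1=\cdots=r_6=0$, equivalent by Proposition~\ref{Prop_Riv} to $RS\equiv 0$, therefore collapses to the identity
$$RS(R,S(\beta^{m}))\equiv 0.$$

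The crux, and the main obstacle, is to extract $T\equiv 0$ from this last identity. I would repeat the symbolic computation of Theorem~\ref{WTR}(2), but inserting $\beta^{m}$ represented by its own auxiliary symbols adapted to the antisymmetry $T_{ikj}=-T_{ijk}$ and the cyclic identity $T_{ijk}+T_{jki}+T_{kij}=0$ inherited from $\beta_{ijk}=\beta_{jik}$. Applying Lemma~\ref{Lem_det_id} to the $R$-determinants should factor $RS(R,S(\beta^{m}))$ as $|A|$ times a product of $3\times 3$ determinants involving $\alpha$, the $T$-symbols, and $x=(x_1,x_2,x_3)$. The ``exchange-of-symbols'' trick that killed $RS(R,S(\beta^{s}))$ in Theorem~\ref{WTR}(2) relied precisely on the decomposition $\beta_i\beta_j\beta_k$ available only in the totally symmetric case, and so must fail here; the surviving factorization should instead exhibit $T$ as the genuine obstruction. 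Combining this with $|A|\neq 0$, the algebraic symmetries of $T$ listed above, and the constraints coming from the second Bianchi identity on $S$ (which, via the derived Gauss equation, already trims the effective number of free components of $T$), elementary linear algebra should force $T\equiv 0$. Once $\alpha$ is known to satisfy the Codazzi equation, Theorem~\ref{FT} on the simply connected $M$ produces the desired isometric embedding $f:(M,g)\to\mathbb{R}^4$.
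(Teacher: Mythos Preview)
Your overall strategy---construct $\alpha$ via the inverse formula, show it satisfies Codazzi, invoke Theorem~\ref{FT}---matches the paper's, and your reduction to $RS(R,S(\beta^m))=0$ via the decomposition $\beta=\beta^s+\beta^m$ is valid and clever. But the final step, where you claim the symbolic method and ``elementary linear algebra'' will force $T=0$, is where the entire difficulty lies, and your proposal does not actually carry it out.

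The specific obstacle is this: the symbolic substitution $S_{ij}=(\alpha^{(5)}\wedge\beta)_{ij}$, $S_m=\beta_m$ used in the proof of Theorem~\ref{WTR}(2) reproduces the derived Gauss equation only because $\beta_{ijk}=\beta_i\beta_j\beta_k$ is fully symmetric. For your $\beta^m$, which is symmetric only in the first two slots, no such product decomposition exists, so you cannot ``repeat the symbolic computation'' with $\beta^m$ in place of $\beta$; Lemma~\ref{Lem_det_id} then has nothing to act on, and your claimed factorization of $RS(R,S(\beta^m))$ is unsupported. The subsequent appeal to linear algebra amounts to asserting precisely what must be proved.

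The paper takes a substantially different route that sidesteps this difficulty. Rather than isolating $\beta^m$ and analyzing $RS(R,S(\beta^m))$, it first \emph{inverts} the derived Gauss equation: since $|A_0|\neq 0$, the linear system determining $\beta^0$ from $S$ and $\alpha^0$ is nondegenerate, and one obtains an explicit symbolic formula for $\beta^0_{ijm}$ (Proposition~\ref{Prop_beta}, via Lemma~\ref{Lem_dG}). Second, it introduces an intermediate quadratic $H_0(x_1,x_2,x_3)$ with coefficients $h^0_{ijkl}$ bilinear in $\alpha^0$ and $S$, and proves the identity $\tfrac{1}{2}RS=-|A_0|H_0$ (Proposition~\ref{Prop_Ralpha}), so that Rivertz's equalities are equivalent to $h^0_{ijkl}=0$. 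Finally, a direct symbolic computation from the inversion formula yields
\[
|A_0|\bigl(\beta^0_{ipq}-\beta^0_{iqp}\bigr)=-\bigl(\alpha^0_{i1}h^0_{23pq}-\alpha^0_{i2}h^0_{13pq}+\alpha^0_{i3}h^0_{12pq}\bigr)
\]
(Proposition~\ref{Prop_Codazzi_2}), from which $h^0=0$ and $|A_0|\neq 0$ immediately give the Codazzi equation. The idea you are missing is this explicit inversion: it lets one compute the Codazzi defect \emph{directly} as a function of $\alpha^0$ and $S$, rather than trying to extract it implicitly from the vanishing of $RS$.
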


\subsection{Solution of the derived Gauss equation}
We consider the Gauss equation as an abstract algebraic equation defined 
on each tangent space of $M$.
Then, under the assumption $|\widetilde{R}|>0$, it admits a unique solution up to sign, 
as stated in Proposition~\ref{RA2} (2) and Remark~\ref{Rem_const} (1).
We fix the sign of the solution once for all.
For example, we set $\varepsilon =1$ in Proposition~\ref{RA2} (2).
Then such defined $\alpha$ gives a solution of the Gauss equation at each point of $M$.
We here denote its solution by $\alpha_{ij}^0$.
We emphasize that $\alpha_{ij}^0$ is not a second fundamental form, since we are unaware 
of the existence of local isometric embedding $f: (M,g) 
\longrightarrow \mathbb{R}^4$ at this 
stage.
It is merely a smooth symmetric $2$-tensor field on $M$, satisfying the pointwise Gauss 
equation.
We do not know whether $\alpha_{ij}^0$ satisfies the Codazzi equation or not at this time.

We put 
$$
A_0=\begin{pmatrix}
\alpha_{11}^0 & \alpha_{12}^0 & \alpha_{13}^0 \\
\rule{0cm}{0.4cm} \alpha_{12}^0 & \alpha_{22}^0 & \alpha_{23}^0 \\
\rule{0cm}{0.4cm} \alpha_{13}^0 & \alpha_{23}^0 & \alpha_{33}^0 \\
\end{pmatrix}.
$$
Note that from the assumption $|\widetilde{R}|>0$ we have $|A_0| \neq 0$ 
(cf. Proposition~\ref{RA2} (1)).
By changing the sign of $\alpha_{ij}^0$ (i.e., the sign of $\varepsilon$ in 
Proposition~\ref{RA2} (2)) if necessary, we may assume that $|A_0|>0$.
In the following argument, we fix such a solution and denote it 
$\alpha_{ij}^0$ again.

Since $\alpha_{ij}^0$ satisfies the Gauss equation 
$$
R_{ijkl}=\alpha_{ik}^0\alpha_{jl}^0-\alpha_{il}^0\alpha_{jk}^0
$$
at every point of $M$, we have the derived Gauss equation for $\alpha_{ij}^0$
$$
S_{ijklm} = \beta_{ikm}^0\alpha_{jl}^0+\alpha_{ik}^0\beta_{jlm}^0
-\beta_{ilm}^0\alpha_{jk}^0-\alpha_{il}^0\beta_{jkm}^0
$$
by taking the covariant derivative of the Gauss equation.
Here, we put $\beta_{ijk}^0 = (\nabla_{X_k}\alpha^0)(X_i,X_j)$.
Then, to prove Proposition~\ref{Prop_R}, we have only to show the following 
proposition on account of the fundamental theorem of hypersurfaces (Theorem~\ref{FT}).

\begin{prop}\label{Prop_Codazzi_1}
Under the above notations, $\alpha_{ij}^0$ satisfies the Codazzi equation 
$\beta_{ijk}^0=\beta_{ikj}^0$ if the conditions $|\widetilde{R}|>0$ and 
$r_1 = \cdots = r_6=0$ hold. 
\end{prop}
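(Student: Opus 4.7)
The plan is to show that the Codazzi defect $T_{ijk}:=\beta^0_{ijk}-\beta^0_{ikj}$ vanishes identically; once this is established, $\alpha^0$ satisfies both the Gauss and Codazzi equations, and the fundamental theorem of hypersurfaces (Theorem~\ref{FT}) produces the desired isometric embedding. The strategy is to substitute the Gauss equation and the derived Gauss equation into Rivertz's symbolic polynomial $RS(x_1,x_2,x_3)$---whose vanishing is equivalent to $r_1=\cdots=r_6=0$ by Proposition~\ref{Prop_Riv}---and to show that the resulting expression factors as $|A_0|$ times a quadratic form in $x$ whose coefficients are linear combinations of the components of $T_{ijk}$.

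Concretely, we would carry out the same symbolic substitution of the Gauss equation as in the proof of Theorem~\ref{WTR}(2), writing $\alpha^0_{ij}=\alpha_i\alpha_j$ symbolically, so that $R_{ij}=\frac{1}{\sqrt{2}}(\alpha^{(1)}\wedge\alpha^{(2)})_{ij}$ and $\overline{R}_{ij}=\frac{1}{\sqrt{2}}(\alpha^{(3)}\wedge\alpha^{(4)})_{ij}$. For the derived Gauss equation, however, we do not have the fully symmetric factorization $\beta^0_{ijk}=\beta_i\beta_j\beta_k$ available, since $\beta^0$ is only known to be symmetric in its first two indices. The key observation is that for each fixed $m$ the derived Gauss equation has precisely the shape of the (polarized) Gauss equation with the two symmetric matrices $\alpha^0$ and $B^{(m)}_{ij}:=\beta^0_{ijm}$; this lets us decompose symbolically $B^{(m)}_{ij}=b^{(m)}_ib^{(m)}_j$ while keeping the $m$-dependence explicit. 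Applying Lemma~\ref{Lem_det_id} to the two $3\times 3$ determinants appearing in $RS(x_1,x_2,x_3)$ should then reduce the whole symbolic expression to a product of $|A_0|$ with small determinants in the symbols $\alpha_i$, $\alpha^{(5)}_i$, $b^{(m)}_i$, and $x_i$.

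The key step, mimicking the end of the proof of Theorem~\ref{WTR}(2), is to apply the symbolic exchanging method by swapping $\alpha_i$ with $\alpha^{(5)}_i$, or analogous pairs. In the fully symmetric case the exchange produced the same expression with opposite sign, forcing the answer to be zero; in the present setting, the exchange should not cancel completely, and the residual should be proportional to $T_{ijk}$. The expected final form is $RS(x_1,x_2,x_3)=|A_0|\cdot Q_T(x)$, where $Q_T$ is a quadratic form in $x$ whose six coefficients are linear in $T$. The hypotheses $|A_0|>0$ and $RS\equiv 0$ then force these six coefficients to vanish; combined with the cyclic identity $T_{ijk}+T_{jki}+T_{kij}=0$ (automatic from the $(i,j)$-symmetry of $\beta^0$) and the linear constraint inherited from the second Bianchi identity for $S$ via derived Gauss, a short linear algebra argument should yield $T\equiv 0$.

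The main obstacle is the combinatorial bookkeeping in the symbolic calculation: we must find the correct decomposition of the non-symmetric $\beta^0$ so that the resulting factorization singles out exactly the Codazzi defect (rather than some entangled combination with other tensors), and we must verify that the six linear conditions on $T$ coming from $RS\equiv 0$, together with the automatic identities, do indeed cut out the trivial subspace inside the $8$-dimensional space of admissible Codazzi defects. Without the symbolic method, the polynomials $r_1,\dots,r_6$ are far too long to reveal such a factorization; with it, Lemma~\ref{Lem_det_id} and the exchanging method should make the underlying structure transparent and reduce the remaining work to elementary linear algebra.
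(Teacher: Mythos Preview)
Your high-level idea—that after the Gauss and derived-Gauss substitutions $RS$ should reduce to $|A_0|$ times something controlled by the Codazzi defect $T$—is correct in spirit, but your proposed execution has a real obstacle and diverges from the paper's route. The symbolic decomposition $S_{ij}=(\alpha^{(5)}\wedge\beta)_{ij}$, $S_m=\beta_m$ in the proof of Theorem~\ref{WTR}(2) works only because a \emph{single} symbol $\beta$ represents a fully symmetric $3$-tensor; this is what makes $S_{ij}$ and $S_m$ independently substitutable and what guarantees the symbolic Bianchi relation $S_{ij}S_k+S_{jk}S_i+S_{ki}S_j=0$. Your per-$m$ workaround $\beta^0_{ijm}=b^{(m)}_ib^{(m)}_j$ gives no consistent substitution for $S_{ij}$ that is independent of $m$, so the three $S$-factors in $RS$ no longer disentangle and Lemma~\ref{Lem_det_id} cannot be applied as you suggest. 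And even granting a factorization $RS=|A_0|\,Q_T(x)$, you would be expressing the six $r_i$ as linear functions of the eight-dimensional $T$-space; concluding $T=0$ from $r_i=0$ then requires an injectivity check (on the Bianchi-constrained subspace) that you leave as a ``should''.

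The paper avoids both issues by running the argument in the opposite direction and never substituting the derived Gauss equation into $RS$. Its three steps are: (A) invert the derived Gauss equation as a linear system in $\beta^0_{\cdot\cdot m}$ (the coefficient matrix has determinant $-2|A_0|^2\neq 0$), obtaining an explicit closed formula for $\beta^0_{ijm}$ in terms of $\alpha^0$ and $S$ (Lemma~\ref{Lem_dG}, Proposition~\ref{Prop_beta}); (B) substitute only the \emph{Gauss} equation (via the inverse formula) into $RS$, yielding $\tfrac{1}{2}RS=-|A_0|\,H_0(x)$ for an explicit quadratic $H_0$ with coefficients $h^0_{ijkl}$ bilinear in $\alpha^0$ and $S$ (Proposition~\ref{Prop_Ralpha}); and (C) compute $\beta^0_{ipq}-\beta^0_{iqp}$ directly from the formula in (A) and establish the identity
\[
|A_0|\bigl(\beta^0_{ipq}-\beta^0_{iqp}\bigr)=-(\alpha^0_{i1}h^0_{23pq}-\alpha^0_{i2}h^0_{13pq}+\alpha^0_{i3}h^0_{12pq})
\]
(Proposition~\ref{Prop_Codazzi_2}). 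Then $r_1=\cdots=r_6=0\Rightarrow h^0_{ijkl}=0\Rightarrow T=0$ is immediate, with no dimension count or injectivity argument needed.
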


\noindent
We remark that the equality $\beta_{ijk}^0=\beta_{jik}^0$ is automatically satisfied 
since $\alpha_{ij}^0$ is symmetric.
It should be also remarked that we have 
\begin{align}
& S_{ijklm} + S_{ijlmk} + S_{ijmkl}  \notag \\
& \hspace{0.3cm} = \alpha_{ik}^0(\beta_{jlm}^0-\beta_{jml}^0) 
+\alpha_{il}^0(\beta_{jmk}^0-\beta_{jkm}^0) 
+\alpha_{im}^0(\beta_{jkl}^0-\beta_{jlk}^0) \label{secondB} \\
& \hspace{0.5cm} +\alpha_{jk}^0(\beta_{iml}^0-\beta_{ilm}^0) 
+\alpha_{jl}^0(\beta_{ikm}^0-\beta_{imk}^0)
+\alpha_{jm}^0(\beta_{ilk}^0-\beta_{ikl}^0)=0 \notag
\end{align}
from the derived Gauss equation and the second Bianchi identity.
But in case ${\rm dim}\:M = 3$, we cannot show the Codazzi equation only from this 
equality.
(See Remark~\ref{G-C} below.)

Now, we introduce the following abstract system of linear equations on 
a new tensor $\overline{\beta}_{ijm}$  
\begin{align}
\overline{\beta}_{ikm}\alpha_{jl}^0+\alpha_{ik}^0\overline{\beta}_{jlm}
-\overline{\beta}_{ilm}\alpha_{jk}^0-\alpha_{il}^0\overline{\beta}_{jkm}
= \overline{S}_{ijklm}, \label{moddG}
\end{align}
where the unknown tensor $\overline{\beta}_{ijm}$ satisfies the relation 
$\overline{\beta}_{ijm}= \overline{\beta}_{jim}$, and the target tensor 
$\overline{S}_{ijklm}$ satisfies the relation 
$$
\overline{S}_{ijklm} = -\overline{S}_{jiklm} = -\overline{S}_{ijlkm} 
= \overline{S}_{klijm}.
$$
(For a reason that will be explained below, we here introduce different letters 
$\overline{\beta}_{ijm}$ and $\overline{S}_{ijklm}$, instead of $\beta_{ijm}^0$ and $S_{ijklm}$.
Remark that we do not assume the second Bianchi identity on the tensor 
$\overline{S}_{ijklm}$.)

Under these preparations, we now divide the proof of Proposition~\ref{Prop_Codazzi_1} in 
the following three steps:

\bigskip

\noindent
(A) We first show that the system of equations (\ref{moddG}) admits a unique solution 
$\overline{\beta}_{ijm}$ under the condition $|A_0|>0$.
Next, we express the solution $\overline{\beta}_{ijm}$ explicitly in terms of $\alpha_{ij}^0$ 
and $\overline{S}_{ijklm}$.
By setting $\overline{\beta}_{ijm} = \beta_{ijm}^0$ and $\overline{S}_{ijklm} 
= S_{ijklm}$, we obtain a formula on $\beta_{ijm}^0$, expressed in terms of 
$\alpha_{ij}^0$ and $S_{ijklm}$ (Proposition~\ref{Prop_beta}).

\medskip

\noindent
(B)
We introduce a new quadratic polynomial $H_0(x_1,x_2,x_3)$ on $x_1,x_2,x_3$, 
whose coefficients are bilinear forms of $\alpha_{ij}^0$ and $S_{ijklm}$.
We prove that the condition 
$RS(x_1,x_2,x_3)=0$ is equivalent to $H_0(x_1,x_2,x_3)=0$ through the Gauss 
equation in case $|A_0| \neq 0$ (Proposition~\ref{Prop_Ralpha}).

\medskip

\noindent
(C) We prove the equality $\beta_{ipq}^0 - \beta_{iqp}^0=0$ 
under the assumptions $|A_0|\neq0$ and $H_0(x_1,x_2,x_3)=0$, by using 
the formula obtained in step (A) (Proposition~\ref{Prop_Codazzi_2}).

\bigskip

\begin{rem}\label{G-C}
Historically, the equation (\ref{secondB}) plays a quite important role in local isometric 
embedding theory.
In the case ${\rm dim}\:M \geq 4$, we can show that the Codazzi equation 
follows from the 
equation (\ref{secondB}) under some generic condition on $\alpha_{ij}^0$, by considering it 
as a system of linear equations on $\beta_{ipq}^0-\beta_{iqp}^0$.
In this case, the system (\ref{secondB}) contains many independent conditions on 
$\beta_{ipq}^0-\beta_{iqp}^0$, and we can show the above fact.
This is nothing but classical Thomas's theorem (Theorem~\ref{Thomas}).
But in the case ${\rm dim}\:M=3$, the system (\ref{secondB}) contains 
less information, and the Codazzi equation does not follow from it.
Thus, we must take another way to show the Codazzi equation.
\end{rem}

\medskip

Now we first prove the fact in step (A).
The system of equations (\ref{moddG}) can be divided into three independent parts 
according as the value of $m=1,2,3$, and three divided parts are essentially 
the same system of equations, 
i.e., we have to solve the equivalent system of equations three times.
To avoid such a threefold calculations, we prepare the following lemma.
To prove this lemma, we constantly use the symbolic method as before.

\begin{lem}\label{Lem_dG}
Let $T_{ijkl}$, $\gamma_{ij}$ be tensors $(i,j,k,l=1,2,3)$, which satisfy 
$$
T_{ijkl}=-T_{jikl}=-T_{ijlk}=T_{klij}, \quad \gamma_{ij}=\gamma_{ji}.
$$
Then the system of linear equations on $\gamma_{ij}$
$$
\gamma_{ik}\alpha_{jl}^0+\alpha_{ik}^0\gamma_{jl}
-\gamma_{il}\alpha_{jk}^0-\alpha_{il}^0\gamma_{jk}=T_{ijkl}
$$
admits a unique solution.
The solution is given by $\gamma_{ij} = \gamma^1_{ij}-\gamma^2_{ij}$, 
where $\gamma^1_{ij}$ and $\gamma^2_{ij}$ are symbolically expressed as 
$$
\gamma_{ij}^1= \gamma_i^1 \gamma_j^1, \qquad 
\gamma_{ij}^2= \gamma_i^2 \gamma_j^2
$$
with 
$$
\gamma_i^1= \frac{1}{\sqrt{2 |A_0|}}
\begin{vmatrix}
\alpha_1^0 & \overline{\alpha}_1^{\,0} & T_{1i} \\ 
\rule{0cm}{0.4cm} \alpha_2^0 & \overline{\alpha}_2^{\,0} & T_{2i} \\ 
\rule{0cm}{0.4cm} \alpha_3^0 & \overline{\alpha}_3^{\,0} & T_{3i} 
\end{vmatrix}, \quad 
\gamma_i^2= \frac{1}{\sqrt{2 |A_0|}}\,
\alpha_i^0 (\overline{\alpha}_1^{\,0} T_{23} 
-\overline{\alpha}_2^{\,0} T_{13}+\overline{\alpha}_3^{\,0} T_{12}).
$$ 
\end{lem}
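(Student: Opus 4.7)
The plan has two parts: establishing uniqueness, and verifying that the stated $\gamma = \gamma^1 - \gamma^2$ is a solution.

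I would first observe that the equation defines a linear map $L_{\alpha^0}$ from the space of symmetric $2$-tensors into the space of $4$-index tensors with the symmetries $T_{ijkl}=-T_{jikl}=-T_{ijlk}=T_{klij}$. In dimension three, both source and target have dimension $6$, so bijectivity of $L_{\alpha^0}$ will follow automatically once existence is verified for every $T_{ijkl}$. For a direct check of injectivity, it is convenient to change basis so that $A_0=\operatorname{diag}(a_1,a_2,a_3)$ with $a_1a_2a_3=|A_0|\ne 0$: the homogeneous system then forces all off-diagonal entries of $\gamma$ to vanish, and reduces to a $3\times 3$ system on the diagonal entries $(\gamma_{11},\gamma_{22},\gamma_{33})$ whose coefficient determinant equals $-2|A_0|$, proving uniqueness.

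For existence, I would verify the explicit formula by the symbolic method of \S 3.2. Decompose symbolically $\alpha^0_{ab}=\alpha^0_a\alpha^0_b=\overline{\alpha}^{\,0}_a\overline{\alpha}^{\,0}_b$ and $T_{abcd}=T_{ab}T_{cd}$ with $T_{ab}=-T_{ba}$, substitute $\gamma^1_{ij}=\gamma^1_i\gamma^1_j$ and $\gamma^2_{ij}=\gamma^2_i\gamma^2_j$ into the left-hand side of the equation, and expand. Each $3\times 3$ determinant defining $\gamma^1_i$ can be developed along its third column as
$T_{1i}(\alpha^0\wedge\overline{\alpha}^{\,0})_{23}-T_{2i}(\alpha^0\wedge\overline{\alpha}^{\,0})_{13}+T_{3i}(\alpha^0\wedge\overline{\alpha}^{\,0})_{12}$,
so that products of such determinants fit the pattern of Lemma~\ref{Lem_det_id}; the normalization $1/\sqrt{2|A_0|}$ is chosen precisely to absorb the factor of $|A_0|$ produced by that lemma.

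The role of $\gamma^2$ is the main subtlety. Because $\gamma^2_{ij}$ has the form $c(T)\,\alpha^0_{ij}$ for a scalar $c(T)$ built from $T$ and $\overline{\alpha}^{\,0}$, its image under $L_{\alpha^0}$ is simply a multiple of the Gauss tensor $\alpha^0_{ik}\alpha^0_{jl}-\alpha^0_{il}\alpha^0_{jk}$. I therefore expect $L_{\alpha^0}(\gamma^1)$ to produce $T_{ijkl}$ plus exactly this spurious Gauss-tensor component, which $-L_{\alpha^0}(\gamma^2)$ is designed to cancel. The hard part of the proof will be tracking this cancellation cleanly through the symbolic manipulations; I expect the exchange-of-symbols technique used at the end of the proof of Theorem~\ref{WTR}(2) to play a role, so that the cancellation can be read off from antisymmetry rather than via a term-by-term expansion.
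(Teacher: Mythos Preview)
Your high-level strategy matches the paper's: establish uniqueness (the paper writes out the $6\times 6$ coefficient matrix explicitly and computes its determinant as $-2|A_0|^2$; your diagonalization argument is a valid alternative), then show $L_{\alpha^0}(\gamma^1)=T_{ijkl}+(\text{spurious Gauss-tensor term})$ and $L_{\alpha^0}(\gamma^2)=(\text{same spurious term})$, and subtract. Your identification of the role of $\gamma^2$ is exactly right.

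The gap is in the computational engine. Lemma~\ref{Lem_det_id} does \emph{not} apply here: its determinants carry wedge products $(\alpha^{(1)}\wedge\alpha^{(2)})_{ab}$ in their columns, whereas the determinants defining $\gamma^1_i$ carry single symbols $\alpha^0_a,\ \overline{\alpha}^{\,0}_a,\ T_{ai}$. Nor is the exchange-of-symbols trick used in this proof. What the paper actually does is introduce a \emph{third} symbol $\overline{\overline{\alpha}}^{\,0}$ (the first two being already consumed inside $\gamma^1_i$), write
\[
L_{\alpha^0}(\gamma^1)_{ijkl}
=\begin{vmatrix}\overline{\overline{\alpha}}^{\,0}_i & \gamma^1_i\\[2pt] \overline{\overline{\alpha}}^{\,0}_j & \gamma^1_j\end{vmatrix}
\begin{vmatrix}\overline{\overline{\alpha}}^{\,0}_k & \gamma^1_k\\[2pt] \overline{\overline{\alpha}}^{\,0}_l & \gamma^1_l\end{vmatrix},
\]
and derive a closed formula for each $2\times 2$ factor (equation~(\ref{eq7}) in the paper). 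Evaluating the product then requires two new elementary identities, proved separately as Lemma~\ref{Lem_identity}: the square of the symbolic $3\times 3$ determinant in $\alpha^0,\overline{\alpha}^{\,0},\overline{\overline{\alpha}}^{\,0}$ equals $3!\,|A_0|$, and a companion identity that recovers $2|A_0|\,T_{ijkl}$ from a specific product involving that determinant and $T$. These identities do the real work; once they are available, the expansion produces $T_{ijkl}$ plus the spurious Gauss term you anticipated, and the cancellation against $L_{\alpha^0}(\gamma^2)$ is then a one-line computation with no exchange argument needed.
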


\noindent
Concerning the symbols, we promise that $\alpha_i^0 \alpha_j^0 = \overline{\alpha}_i^{\,0} 
\,\overline{\alpha}_j^{\,0} =\alpha_{ij}^{\,0}$, $T_{ij} = -T_{ji}$ and $T_{ij}T_{kl}=T_{ijkl}$.
We can express $\gamma_{ij}^1$ and $\gamma_{ij}^2$ explicitly as 
\begin{align*}
\gamma^1_{ij} &= \frac{1}{2|A_0|} 
\begin{vmatrix}
\alpha_1^0 & \overline{\alpha}_1^{\,0} & T_{1i} \\ 
\rule{0cm}{0.4cm} \alpha_2^0 & \overline{\alpha}_2^{\,0} & T_{2i} \\ 
\rule{0cm}{0.4cm} \alpha_3^0 & \overline{\alpha}_3^{\,0} & T_{3i} 
\end{vmatrix} 
\begin{vmatrix}
\alpha_1^0 & \overline{\alpha}_1^{\,0} & T_{1j} \\ 
\rule{0cm}{0.4cm} \alpha_2^0 & \overline{\alpha}_2^{\,0} & T_{2j} \\ 
\rule{0cm}{0.4cm} \alpha_3^0 & \overline{\alpha}_3^{\,0} & T_{3j} 
\end{vmatrix}, \\
\gamma^2_{ij} & 
=\frac{1}{2|A_0|}\,\alpha_{ij}^0 \,(\overline{\alpha}_1^{\,0} T_{23} 
-\overline{\alpha}_2^{\,0} T_{13}+\overline{\alpha}_3^{\,0} T_{12})^2.
\end{align*}
For example, we have 
\begin{align*}
\gamma_{12}^1 & = \frac{1}{2|A_0|}
(\alpha_1^0 \,\overline{\alpha}_2^{\,0}T_{31}+\alpha_3^0 \,\overline{\alpha}_1^{\,0}T_{21}
-\alpha_1^0 \,\overline{\alpha}_3^{\,0}T_{21}-\alpha_2^0 \,\overline{\alpha}_1^{\,0}T_{31}) \\
& \hspace{1cm} \times
(\alpha_1^0 \,\overline{\alpha}_2^{\,0}T_{32}+\alpha_2^0 \,\overline{\alpha}_3^{\,0}T_{12}
-\alpha_2^0 \,\overline{\alpha}_1^{\,0}T_{32}-\alpha_3^0 \,\overline{\alpha}_2^{\,0}T_{12}) \\
& = \frac{1}{|A_0|}
\{ (\alpha_{11}^0\alpha_{22}^0-\alpha_{12}^0{}^2)T_{13}T_{23}
+ (\alpha_{13}^0\alpha_{22}^0-\alpha_{12}^0\alpha_{23}^0)T_{12}T_{13} \\
& \hspace{1cm} + (\alpha_{12}^0\alpha_{13}^0-\alpha_{11}^0\alpha_{23}^0)T_{12}T_{23}
+ (\alpha_{12}^0\alpha_{33}^0-\alpha_{13}^0\alpha_{23}^0)T_{12}{}^2 \} \\
& = \frac{1}{|A_0|}
\{ (\alpha_{11}^0\alpha_{22}^0-\alpha_{12}^0{}^2)T_{1323}
+ (\alpha_{13}^0\alpha_{22}^0-\alpha_{12}^0\alpha_{23}^0)T_{1213} \\
& \hspace{1cm} + (\alpha_{12}^0\alpha_{13}^0-\alpha_{11}^0\alpha_{23}^0)T_{1223}
+ (\alpha_{12}^0\alpha_{33}^0-\alpha_{13}^0\alpha_{23}^0)T_{1212} \},
\end{align*}
and
\begin{align*}
\gamma_{12}^2 & = \frac{1}{2|A_0|} \alpha_{12}^0
(\overline{\alpha}_1^{\,0}T_{23}-\overline{\alpha}_2^{\,0}T_{13}
+\overline{\alpha}_3^{\,0}T_{12}) (\overline{\alpha}_1^{\,0}T_{23}
-\overline{\alpha}_2^{\,0}T_{13}+\overline{\alpha}_3^{\,0}T_{12}) \\
& = \frac{1}{2|A_0|} \alpha_{12}^0
(\alpha_{11}^0T_{23}{}^2-2\alpha_{12}^0T_{13}T_{23}+2\alpha_{13}^0T_{12}T_{23} \\
& \hspace{2.1cm} +\alpha_{22}^0T_{13}{}^2-2\alpha_{23}^0T_{12}T_{13}
+\alpha_{33}^0T_{12}{}^2) \\
& = \frac{1}{2|A_0|} \alpha_{12}^0
(\alpha_{11}^0T_{2323}-2\alpha_{12}^0T_{1323}+2\alpha_{13}^0T_{1223} \\
& \hspace{2.1cm} +\alpha_{22}^0T_{1313}-2\alpha_{23}^0T_{1213}
+\alpha_{33}^0T_{1212}).
\end{align*}
The solution $\gamma_{12}$ is obtained by taking the difference, which becomes rather a 
long and complicated expression, though its symbolic expression is relatively simple.

To prove Lemma~\ref{Lem_dG}, we prepare two identities.
We promise $\overline{\overline{\alpha}}_i^{\,0} \,\overline{\overline{\alpha}}_j^{\,0} 
= \alpha_{ij}^0$ in the following.

\begin{lem}\label{Lem_identity}
The following two identities hold:
$$
\begin{vmatrix}
\alpha_1^0 & \overline{\alpha}_1^{\,0} & \overline{\overline{\alpha}}_1^{\,0} \\ 
\rule{0cm}{0.5cm} \alpha_2^0 & \overline{\alpha}_2^{\,0} & 
\overline{\overline{\alpha}}_2^{\,0} \\ 
\rule{0cm}{0.5cm} \alpha_3^0 & \overline{\alpha}_3^{\,0} & 
\overline{\overline{\alpha}}_3^{\,0}
\end{vmatrix}^2 = 3!\, |A_0|. \hspace{7cm} \leqno{(1)}
$$
$$
\begin{vmatrix}
\alpha_1^0 & \overline{\alpha}_1^{\,0} & \overline{\overline{\alpha}}_1^{\,0} \\ 
\rule{0cm}{0.5cm} \alpha_2^0 & \overline{\alpha}_2^{\,0} & 
\overline{\overline{\alpha}}_2^{\,0} \\ 
\rule{0cm}{0.5cm} \alpha_3^0 & \overline{\alpha}_3^{\,0} & 
\overline{\overline{\alpha}}_3^{\,0}
\end{vmatrix}  
\begin{vmatrix}
\alpha_i^0 & \overline{\alpha}_i^{\,0} \\ 
\rule{0cm}{0.4cm} \alpha_j^0 & \overline{\alpha}_j^{\,0}
\end{vmatrix} 
(\overline{\overline{\alpha}}_1^{\,0} T_{23}
-\overline{\overline{\alpha}}_2^{\,0} T_{13}
+\overline{\overline{\alpha}}_3^{\,0} T_{12})T_{kl}
= 2 \, |A_0| \, T_{ijkl}. \leqno{(2)}
$$
\end{lem}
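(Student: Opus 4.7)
The plan is to treat both identities as polynomial equalities in the entries $\alpha^0_{ij}$ (and linear in $T_{ijkl}$ for (2)) after symbolic restoration. Both sides are polynomial-continuous in $\alpha^0_{ij}$, so it suffices to verify them on the open dense subset where $|A_0|\neq 0$ and extend by continuity, exactly as in the proof of Proposition~\ref{RA2}.

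For (1), I would expand the determinant by the Leibniz formula,
\[
D := \sum_{\pi \in S_3} \mathrm{sgn}(\pi)\, \alpha^0_{\pi(1)}\,\overline{\alpha}^0_{\pi(2)}\,\overline{\overline{\alpha}}^0_{\pi(3)},
\]
square it, and apply the restoration rules $\alpha^0_i\alpha^0_j = \overline{\alpha}^0_i\overline{\alpha}^0_j = \overline{\overline{\alpha}}^0_i\overline{\overline{\alpha}}^0_j = \alpha^0_{ij}$. The product of the $\pi$-term and the $\sigma$-term gives $\mathrm{sgn}(\pi)\mathrm{sgn}(\sigma)\alpha^0_{\pi(1)\sigma(1)}\alpha^0_{\pi(2)\sigma(2)}\alpha^0_{\pi(3)\sigma(3)}$. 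For fixed $\pi$, the inner sum over $\sigma$ equals $\mathrm{sgn}(\pi)|A_0|$ by the row-permutation invariance of the determinant, and the outer sum over $\pi\in S_3$ yields $3!\,|A_0|$.

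For (2), the strategy is geometric. Treating the symbolic ``vectors'' $\mathbf{a}=(\alpha^0_i)$, $\mathbf{b}=(\overline{\alpha}^0_i)$, $\mathbf{c}=(\overline{\overline{\alpha}}^0_i)$ as formal elements of $\mathbb{R}^3$, and setting $\mathbf{T}^\ast=(T_{23},-T_{13},T_{12})$, one has $D=\mathbf{c}\cdot(\mathbf{a}\times\mathbf{b})$, the minor $M_{ij}:=\alpha^0_i\overline{\alpha}^0_j-\alpha^0_j\overline{\alpha}^0_i = (\mathbf{a}\wedge\mathbf{b})_{ij}$, and $N:=\overline{\overline{\alpha}}^0_1 T_{23}-\overline{\overline{\alpha}}^0_2 T_{13}+\overline{\overline{\alpha}}^0_3 T_{12}=\mathbf{c}\cdot\mathbf{T}^\ast$. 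I would first restore the two $\overline{\overline{\alpha}}^0$-symbols in $DN$ via $c_sc_t=\alpha^0_{st}$, obtaining $\sum_{s,t}(\mathbf{a}\times\mathbf{b})_s\alpha^0_{st}T^\ast_t$. Multiplying by $M_{ij}$ and restoring the remaining $\mathbf{a}$- and $\mathbf{b}$-symbols via $a_pa_i\cdot b_qb_j - a_pa_j\cdot b_qb_i = \alpha^0_{pi}\alpha^0_{qj}-\alpha^0_{pj}\alpha^0_{qi}$ together with $(\mathbf{a}\times\mathbf{b})_s=\epsilon_{spq}a_pb_q$, one arrives at
\[
DNM_{ij}=2\sum_{s,t}\alpha^0_{st}\,(\star E^{(i,j)})_s\, T^\ast_t,\qquad E^{(i,j)}_{pq}:=\alpha^0_{pi}\alpha^0_{qj}-\alpha^0_{pj}\alpha^0_{qi},
\]
where $\star$ denotes the Hodge star on $3\times 3$ antisymmetric matrices.

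The key algebraic simplification is the identity $\star(XYX^T)=(\det X)X^{-T}\star Y$, valid for any $3\times 3$ matrix $X$ and antisymmetric $Y$. Writing $E^{(i,j)}=A_0(e_ie_j^T-e_je_i^T)A_0$ and applying this to $X=A_0$ (symmetric) yields $\star E^{(i,j)}=|A_0|\,A_0^{-1}\star E_{ij}$ with $(\star E_{ij})_r=\epsilon_{rij}$. Substituting, the factor $A_0^{-1}$ contracts against $A_0$ through $\sum_s(A_0^{-1})_{sr}\alpha^0_{st}=\delta_{rt}$, leaving $|A_0|\sum_t\epsilon_{tij}T^\ast_t=|A_0|T_{ij}$ by standard Hodge duality. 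Multiplying by $T_{kl}$ and applying the restoration rule $T_{ij}T_{kl}=T_{ijkl}$ produces $2|A_0|T_{ijkl}$, as required. I expect the main obstacle to be the careful bookkeeping of when to restore which symbols and the correct handling of the Hodge-star identity $\star(A_0YA_0)$; a brute-force coordinate-by-coordinate expansion is in principle possible but would be prohibitively lengthy, which is precisely the situation the symbolic calculus is designed for.
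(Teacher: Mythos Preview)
Your proof of (1) is essentially identical to the paper's: Leibniz expansion, pairing of symbols, and recognition of the inner sum as a row-permuted determinant.

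For (2), your argument is correct but follows a genuinely different route from the paper. The paper never introduces the Hodge star or the identity $\star(XYX^T)=(\det X)X^{-T}\star Y$. Instead, it first pairs the $3\times 3$ determinant $D$ with the $2\times 2$ minor $M_{ij}$ \emph{and a single factor} $\overline{\overline{\alpha}}^{\,0}_m$: expanding $M_{ij}\overline{\overline{\alpha}}^{\,0}_m = \alpha^0_i\overline{\alpha}^{\,0}_j\overline{\overline{\alpha}}^{\,0}_m - \alpha^0_j\overline{\alpha}^{\,0}_i\overline{\overline{\alpha}}^{\,0}_m$ and restoring against $D$ immediately yields $2\det(\alpha^0_{\bullet i}\mid\alpha^0_{\bullet j}\mid\alpha^0_{\bullet m}) = 2\,\mathrm{sgn}\{i,j,m\}\,|A_0|$. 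Summing this over $m$ against $T^\ast_m$ then gives $2|A_0|T_{ij}$ directly, and multiplying by $T_{kl}$ finishes. So the paper restores $\alpha,\overline{\alpha},\overline{\overline{\alpha}}$ simultaneously (three symbols at once), while you restore the $\overline{\overline{\alpha}}$ pair first and then the $\alpha,\overline{\alpha}$ pairs, which forces the detour through $E^{(i,j)}$ and the cofactor identity. The paper's ordering is shorter and avoids any appeal to $A_0^{-1}$ or the density argument; your ordering is more systematic and makes the linear-algebraic structure (the adjugate identity $A_0\,\mathrm{adj}(A_0)=|A_0|\,I$) explicit. Both are valid, and your continuity extension to $|A_0|=0$ is exactly the device the paper uses elsewhere (Proposition~\ref{RA2}, Lemma~\ref{Lem_det_id}).
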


\begin{proof}
(1) 
The first identity immediately follows from the definition of the determinant.
In fact, the left hand side is equal to 
\begin{align*}
& \sum_{\sigma} {\rm sgn}\, \sigma \, 
\alpha_{\sigma(1)}^0 \overline{\alpha}_{\sigma(2)}^{\,0} 
\overline{\overline{\alpha}}_{\sigma(3)}^{\,0} \cdot
\sum_{\tau} {\rm sgn}\, \tau \, 
\alpha_{\tau(1)}^0 \overline{\alpha}_{\tau(2)}^{\,0} 
\overline{\overline{\alpha}}_{\tau(3)}^{\,0} \\
= & \sum_{\sigma, \tau} {\rm sgn}\, \sigma \tau \, 
\alpha_{\sigma(1)\tau(1)}^0 \alpha_{\sigma(2)\tau(2)}^0 
\alpha_{\sigma(3) \tau(3)}^0 \\
= & 3! \, \sum_{\tau} {\rm sgn}\, \tau \, 
\alpha_{1\tau(1)}^0 \alpha_{2\tau(2)}^0 \alpha_{3 \tau(3)}^0, 
\end{align*}
which is nothing but $3! \, |A_0|$.

(2) 
We have 
\begin{align*}
& \begin{vmatrix}
\alpha_1^0 & \overline{\alpha}_1^{\,0} & \overline{\overline{\alpha}}_1^{\,0} \\ 
\rule{0cm}{0.5cm} \alpha_2^0 & \overline{\alpha}_2^{\,0} & 
\overline{\overline{\alpha}}_2^{\,0} \\ 
\rule{0cm}{0.5cm} \alpha_3^0 & \overline{\alpha}_3^{\,0} & 
\overline{\overline{\alpha}}_3^{\,0}
\end{vmatrix}  
\begin{vmatrix}
\alpha_i^0 & \overline{\alpha}_i^{\,0} \\ 
\rule{0cm}{0.4cm} \alpha_j^0 & \overline{\alpha}_j^{\,0}
\end{vmatrix} 
\overline{\overline{\alpha}}_m^{\,0}
= \begin{vmatrix}
\alpha_1^0 & \overline{\alpha}_1^{\,0} & \overline{\overline{\alpha}}_1^{\,0} \\ 
\rule{0cm}{0.5cm} \alpha_2^0 & \overline{\alpha}_2^{\,0} & 
\overline{\overline{\alpha}}_2^{\,0} \\ 
\rule{0cm}{0.5cm} \alpha_3^0 & \overline{\alpha}_3^{\,0} & 
\overline{\overline{\alpha}}_3^{\,0}
\end{vmatrix} 
(\alpha_i^0 \,\overline{\alpha}_j^{\,0}\,\overline{\overline{\alpha}}_m^{\,0}
-\alpha_j^0 \,\overline{\alpha}_i^{\,0}\,\overline{\overline{\alpha}}_m^{\,0}) \\
& \rule{0cm}{1.2cm} = \begin{vmatrix}
\alpha_{1i}^0 & \alpha_{1j}^0 & \alpha_{1m}^0 \\ 
\rule{0cm}{0.5cm} \alpha_{2i}^0 & \alpha_{2j}^0 & \alpha_{2m}^0 \\ 
\rule{0cm}{0.5cm} \alpha_{3i}^0 & \alpha_{3j}^0 & \alpha_{3m}^0
\end{vmatrix}
- \begin{vmatrix}
\alpha_{1j}^0 & \alpha_{1i}^0 & \alpha_{1m}^0 \\ 
\rule{0cm}{0.5cm} \alpha_{2j}^0 & \alpha_{2i}^0 & \alpha_{2m}^0 \\ 
\rule{0cm}{0.5cm} \alpha_{3j}^0 & \alpha_{3i}^0 & \alpha_{3m}^0
\end{vmatrix} 
= 2 \,\begin{vmatrix}
\alpha_{1i}^0 & \alpha_{1j}^0 & \alpha_{1m}^0 \\ 
\rule{0cm}{0.5cm} \alpha_{2i}^0 & \alpha_{2j}^0 & \alpha_{2m}^0 \\ 
\rule{0cm}{0.5cm} \alpha_{3i}^0 & \alpha_{3j}^0 & \alpha_{3m}^0
\end{vmatrix}.
\end{align*}
Hence we have 
\begin{align*}
& \begin{vmatrix}
\alpha_1^0 & \overline{\alpha}_1^{\,0} & \overline{\overline{\alpha}}_1^{\,0} \\ 
\rule{0cm}{0.5cm} \alpha_2^0 & \overline{\alpha}_2^{\,0} & 
\overline{\overline{\alpha}}_2^{\,0} \\ 
\rule{0cm}{0.5cm} \alpha_3^0 & \overline{\alpha}_3^{\,0} & 
\overline{\overline{\alpha}}_3^{\,0}
\end{vmatrix}  
\begin{vmatrix}
\alpha_i^0 & \overline{\alpha}_i^{\,0} \\ 
\rule{0cm}{0.4cm} \alpha_j^0 & \overline{\alpha}_j^{\,0}
\end{vmatrix} 
\overline{\overline{\alpha}}_m^{\,0} 
= 2\,{\rm sgn} \{i,j,m\} \,|A_0|.
\end{align*}
Here, ${\rm sgn} \, \{i,j,m\}$ is the signature of $\{ i,j,m \}$ when $\{ i,j,m \}$ is a 
permutation of $\{1,2,3 \}$, and is $0$ otherwise.
From this equality, we have easily the second identity.
\end{proof}

\begin{rem}\label{Rem_identity}
Concerning the identity (1), we remark that the following is an $\lq\lq$illegal'' symbolic 
calculation of the determinant (with smaller size, for simplicity):
\begin{align*}
\begin{vmatrix}
\alpha_1^0 & \overline{\alpha}_1^{\,0} \\ 
\rule{0cm}{0.4cm} \alpha_2^0 & \overline{\alpha}_2^{\,0} 
\end{vmatrix}^2 
= & \begin{vmatrix}
\alpha_1^0 & \overline{\alpha}_1^{\,0} \\ 
\rule{0cm}{0.4cm} \alpha_2^0 & \overline{\alpha}_2^{\,0} 
\end{vmatrix} 
\begin{vmatrix}
\alpha_1^0 & \alpha_2^0 \\ 
\rule{0cm}{0.4cm} \overline{\alpha}_1^{\,0} & \overline{\alpha}_2^{\,0} 
\end{vmatrix} \\
= & \begin{vmatrix}
\alpha_1^0 \alpha_1^0 + \overline{\alpha}_1^{\,0} \,\overline{\alpha}_1^{\,0} & 
\alpha_1^0 \alpha_2^0 + \overline{\alpha}_1^{\,0} \,\overline{\alpha}_2^{\,0} \\
\rule{0cm}{0.4cm} \alpha_2^0 \alpha_1^0 + \overline{\alpha}_2^{\,0} \,
\overline{\alpha}_1^{\,0} & 
\alpha_2^0 \alpha_2^0 + \overline{\alpha}_2^{\,0} \,\overline{\alpha}_2^{\,0}
\end{vmatrix} \\
= & \begin{vmatrix}
2\alpha_{11}^0 & 2\alpha_{12}^{\,0} \\ 
\rule{0cm}{0.4cm} 2\alpha_{12}^0 & 2\alpha_{22}^{\,0} 
\end{vmatrix} 
= 4 \begin{vmatrix}
\alpha_{11}^0 & \alpha_{12}^{\,0} \\ 
\rule{0cm}{0.4cm} \alpha_{12}^0 & \alpha_{22}^{\,0} 
\end{vmatrix}.
\end{align*}
In the final expression, the correct coefficient is $2$, instead of $4$.
In this calculation, the determinant in the second line has no symbolic meaning.
In fact, if we expand it,  it becomes 
\begin{align*}
& (\alpha_1^0 \alpha_1^0 + \overline{\alpha}_1^{\,0} \,\overline{\alpha}_1^{\,0})
(\alpha_2^0 \alpha_2^0 + \overline{\alpha}_2^{\,0} \,\overline{\alpha}_2^{\,0})
-
(\alpha_1^0 \alpha_2^0 + \overline{\alpha}_1^{\,0} \,\overline{\alpha}_2^{\,0})
(\alpha_2^0 \alpha_1^0 + \overline{\alpha}_2^{\,0} \,\overline{\alpha}_1^{\,0}) \\
= & \alpha_1^0 \alpha_1^0 \alpha_2^0 \alpha_2^0 + \cdots,
\end{align*}
which has no symbolic meaning since four $\alpha_i^0$'s appear.
We must be careful not to fall into such an illegal calculation.
\end{rem}

\medskip

{\it Proof of Lemma~\ref{Lem_dG}}.
We first prove the uniqueness of the solution.
Since the indices $i, j, k, l$ moves in the range $1\sim3$, the system of linear equations can 
be expressed in the matrix form as follows:
$$
\begin{pmatrix}
\alpha_{22}^0 & -2\alpha_{12}^0 & 0 & \alpha_{11}^0 & 0 & 0 \\
\rule{0cm}{0.4cm} \alpha_{23}^0 & -\alpha_{13}^0 & -\alpha_{12}^0 & 0 & \alpha_{11}^0 & 0 \\
\rule{0cm}{0.4cm} 0 & \alpha_{23}^0 & -\alpha_{22}^0 & -\alpha_{13}^0 & \alpha_{12}^0 & 0 \\
\rule{0cm}{0.4cm} \alpha_{33}^0 & 0 & -2\alpha_{13}^0 & 0 & 0 & \alpha_{11}^0 \\
\rule{0cm}{0.4cm} 0 & \alpha_{33}^0 & -\alpha_{23}^0 & 0 & -\alpha_{13}^0 & \alpha_{12}^0 \\
\rule{0cm}{0.4cm} 0 & 0 & 0 & \alpha_{33}^0 & -2\alpha_{23}^0 & \alpha_{22}^0
\end{pmatrix}
\begin{pmatrix}
\gamma_{11} \\ \gamma_{12} \\ \gamma_{13} \\ \gamma_{22} \\ \gamma_{23} \\ 
\gamma_{33}
\end{pmatrix}
= \begin{pmatrix}
T_{1212} \\ T_{1213} \\ T_{1223} \\ T_{1313} \\ T_{1323} \\ T_{2323}
\end{pmatrix}.
$$
By direct calculations we know that the determinant of the above $(6,6)$-matrix is 
equal to $-2|A_0|^2$, which is non-zero from the assumption.
Hence the solution $\gamma_{ij}$ exists uniquely.

Next, we show that $\gamma_{ij}=\gamma_{ij}^1-\gamma_{ij}^2$ actually gives a 
solution.
For this purpose, we first calculate two polynomials 
\begin{align*}
& \gamma^1_{ik}\alpha_{jl}^0+\alpha_{ik}^0\gamma^1_{jl}
-\gamma^1_{il}\alpha_{jk}^0-\alpha_{il}^0\gamma^1_{jk}, \\
\rule{0cm}{0.5cm} & \gamma^2_{ik}\alpha_{jl}^0+\alpha_{ik}^0\gamma^2_{jl}
-\gamma^2_{il}\alpha_{jk}^0-\alpha_{il}^0\gamma^2_{jk}
\end{align*}
separately, and take the difference at the last step.
We decompose the first polynomial symbolically as 
$$
\gamma^1_{ik}\alpha_{jl}^0+\alpha_{ik}^0\gamma^1_{jl}
-\gamma^1_{il}\alpha_{jk}^0-\alpha_{il}^0\gamma^1_{jk}
= \begin{vmatrix} \overline{\overline{\alpha}}_i^{\,0} & \gamma_i^1 \\
\rule{0cm}{0.5cm} \overline{\overline{\alpha}}_j^{\,0} & \gamma_j^1 \end{vmatrix}
\begin{vmatrix} \overline{\overline{\alpha}}_k^{\,0} & \gamma_k^1 \\
\rule{0cm}{0.5cm} \overline{\overline{\alpha}}_l^{\,0} & \gamma_l^1 \end{vmatrix}.
$$
Here we use the symbol $\overline{\overline{\alpha}}_i^{\,0}$, since $\alpha_i^0$ 
and $\overline{\alpha}_i^{\,0}$ are already appeared in the symbol $\gamma_i^1$.
We calculate its half factor 
$$
\begin{vmatrix} \overline{\overline{\alpha}}_i^{\,0} & \gamma_i^1 \\
\rule{0cm}{0.5cm} \overline{\overline{\alpha}}_j^{\,0} & \gamma_j^1 \end{vmatrix}
= \overline{\overline{\alpha}}_i^{\,0} \gamma_j^1
-\overline{\overline{\alpha}}_j^{\,0} \gamma_i^1
$$
symbolically.
We cannot restore this symbolic half factor to a polynomial since its symbolic partner 
is lacked.
But in the final step of calculations, we will take the product with its partner 
$$
\begin{vmatrix} \overline{\overline{\alpha}}_k^{\,0} & \gamma_k^1 \\
\rule{0cm}{0.5cm} \overline{\overline{\alpha}}_l^{\,0} & \gamma_l^1 \end{vmatrix}
= \overline{\overline{\alpha}}_k^{\,0} \gamma_l^1
-\overline{\overline{\alpha}}_l^{\,0} \gamma_k^1,
$$
and we can safely restore it to a polynomial form.
For some time we proceed to calculate the above half factor as if it is a 
polynomial.
The result is given by the following equality:
\begin{align}
& \begin{vmatrix}
\overline{\overline{\alpha}}_i^{\,0} & \gamma^1_i \\ 
\rule{0cm}{0.5cm} \overline{\overline{\alpha}}_j^{\,0} & \gamma^1_j
\end{vmatrix}  \label{eq7} \\
= & \frac{1}{\sqrt{2 |A_0|}} \left(
\begin{vmatrix}
\alpha_1^0 & \overline{\alpha}_1^{\,0} & \overline{\overline{\alpha}}_1^{\,0} \\ 
\rule{0cm}{0.5cm} \alpha_2^0 & \overline{\alpha}_2^{\,0} & 
\overline{\overline{\alpha}}_2^{\,0} \\ 
\rule{0cm}{0.5cm} \alpha_3^0 & \overline{\alpha}_3^{\,0} & 
\overline{\overline{\alpha}}_3^{\,0}
\end{vmatrix} T_{ij}
-\begin{vmatrix}
\alpha_i^0 & \overline{\alpha}_i^{\,0} \\ 
\rule{0cm}{0.4cm} \alpha_j^0 & \overline{\alpha}_j^{\,0}
\end{vmatrix}
(\overline{\overline{\alpha}}_1^{\,0} T_{23}
-\overline{\overline{\alpha}}_2^{\,0} T_{13}
+\overline{\overline{\alpha}}_3^{\,0} T_{12})
\right). \notag
\end{align}
The equality for its symbolic partner can be similarly obtained.
In the case $i=j$, the equality (\ref{eq7}) clearly holds.
If we exchange $i$ and $j$, both sides of (\ref{eq7}) change their signs.
Thus, to prove (\ref{eq7}), we have only to consider the case where $\{i,j,k\}$ is a 
cyclic permutation of $\{1,2,3\}$ by taking a suitable $k$.
Then we have 
\begin{align*}
\begin{vmatrix}
\overline{\overline{\alpha}}_i^{\,0} & \gamma^1_i \\ 
\rule{0cm}{0.5cm} \overline{\overline{\alpha}}_j^{\,0} & \gamma^1_j
\end{vmatrix} 
= & \, \overline{\overline{\alpha}}_i^{\,0} \gamma_j^1 
 - \overline{\overline{\alpha}}_j^{\,0}\gamma_i^1 \\
 = & \frac{1}{\sqrt{2 |A_0|}} \,\overline{\overline{\alpha}}_i^{\,0}
\begin{vmatrix}
\alpha_1^0 & \overline{\alpha}_1^{\,0} & T_{1j} \\ 
\rule{0cm}{0.5cm} \alpha_2^0 & \overline{\alpha}_2^{\,0} & T_{2j} \\ 
\rule{0cm}{0.5cm} \alpha_3^0 & \overline{\alpha}_3^{\,0} & T_{3j} 
\end{vmatrix}
-  \frac{1}{\sqrt{2 |A_0|}} \,\overline{\overline{\alpha}}_j^{\,0}
\begin{vmatrix}
\alpha_1^0 & \overline{\alpha}_1^{\,0} & T_{1i} \\ 
\rule{0cm}{0.5cm} \alpha_2^0 & \overline{\alpha}_2^{\,0} & T_{2i} \\ 
\rule{0cm}{0.5cm} \alpha_3^0 & \overline{\alpha}_3^{\,0} & T_{3i} 
\end{vmatrix} \\
 = & \frac{1}{\sqrt{2 |A_0|}} 
\begin{vmatrix}
\alpha_1^0 & \overline{\alpha}_1^{\,0} & 
\overline{\overline{\alpha}}_i^{\,0}\,T_{1j} - \overline{\overline{\alpha}}_j^{\,0}\,T_{1i}\\ 
\rule{0cm}{0.5cm} \alpha_2^0 & \overline{\alpha}_2^{\,0} & 
\overline{\overline{\alpha}}_i^{\,0}\,T_{2j} - \overline{\overline{\alpha}}_j^{\,0}\,T_{2i}\\ 
\rule{0cm}{0.5cm} \alpha_3^0 & \overline{\alpha}_3^{\,0} & 
\overline{\overline{\alpha}}_i^{\,0}\,T_{3j}  - \overline{\overline{\alpha}}_j^{\,0}\,T_{3i}
\end{vmatrix}.
\end{align*}
Since $\{i,j,k \}$ is a cyclic permutation of $\{ 1,2,3 \}$, this is equal to 
\begin{align*}
& \frac{1}{\sqrt{2 |A_0|}} 
\begin{vmatrix}
\alpha_i^0 & \overline{\alpha}_i^{\,0} & 
\overline{\overline{\alpha}}_i^{\,0}\,T_{ij} - \overline{\overline{\alpha}}_j^{\,0}\,T_{ii}\\ 
\rule{0cm}{0.5cm} \alpha_j^0 & \overline{\alpha}_j^{\,0} & 
\overline{\overline{\alpha}}_i^{\,0}\,T_{jj} - \overline{\overline{\alpha}}_j^{\,0}\,T_{ji}\\ 
\rule{0cm}{0.5cm} \alpha_k^0 & \overline{\alpha}_k^{\,0} & 
\overline{\overline{\alpha}}_i^{\,0}\,T_{kj}  - \overline{\overline{\alpha}}_j^{\,0}\,T_{ki}
\end{vmatrix} \\
 = & \frac{1}{\sqrt{2 |A_0|}} 
\begin{vmatrix}
\alpha_i^0 & \overline{\alpha}_i^{\,0} & \overline{\overline{\alpha}}_i^{\,0}\,T_{ij}\\ 
\rule{0cm}{0.5cm} \alpha_j^0 & \overline{\alpha}_j^{\,0} &
 \overline{\overline{\alpha}}_j^{\,0}\,T_{ij}\\ 
\rule{0cm}{0.5cm} \alpha_k^0 & \overline{\alpha}_k^{\,0} & 
\overline{\overline{\alpha}}_k^{\,0}\,T_{ij}
-(\overline{\overline{\alpha}}_i^{\,0}\,T_{jk}
+ \overline{\overline{\alpha}}_j^{\,0}\,T_{ki}
+ \overline{\overline{\alpha}}_k^{\,0}\,T_{ij})
\end{vmatrix} \\
 = & \frac{1}{\sqrt{2 |A_0|}} \left( 
\begin{vmatrix}
\alpha_i^0 & \overline{\alpha}_i^{\,0} & \overline{\overline{\alpha}}_i^{\,0}\\ 
\rule{0cm}{0.5cm} \alpha_j^0 & \overline{\alpha}_j^{\,0} & \overline{\overline{\alpha}}_j^{\,0}\\ 
\rule{0cm}{0.5cm} \alpha_k^0 & \overline{\alpha}_k^{\,0} & \overline{\overline{\alpha}}_k^{\,0}
\end{vmatrix} T_{ij}
-  
\begin{vmatrix}
\alpha_i^0 & \overline{\alpha}_i^{\,0} & 0\\ 
\rule{0cm}{0.4cm} \alpha_j^0 & \overline{\alpha}_j^{\,0} & 0\\ 
\rule{0cm}{0.4cm} \alpha_k^0 & \overline{\alpha}_k^{\,0} & 1
\end{vmatrix} 
(\overline{\overline{\alpha}}_i^{\,0}\,T_{jk}
+ \overline{\overline{\alpha}}_j^{\,0}\,T_{ki}
+ \overline{\overline{\alpha}}_k^{\,0}\,T_{ij}) \right) \\
= & \frac{1}{\sqrt{2 |A_0|}} \left(
\begin{vmatrix}
\alpha_1^0 & \overline{\alpha}_1^{\,0} & \overline{\overline{\alpha}}_1^{\,0}\\ 
\rule{0cm}{0.5cm} \alpha_2^0 & \overline{\alpha}_2^{\,0} & 
\overline{\overline{\alpha}}_2^{\,0}\\ 
\rule{0cm}{0.5cm} \alpha_3^0 & \overline{\alpha}_3^{\,0} & 
\overline{\overline{\alpha}}_3^{\,0}
\end{vmatrix} T_{ij}
- \begin{vmatrix}
\alpha_i^0 & \overline{\alpha}_i^{\,0} \\ 
\rule{0cm}{0.4cm} \alpha_j^0 & \overline{\alpha}_j^{\,0} 
\end{vmatrix} 
(\overline{\overline{\alpha}}_1^{\,0}\,T_{23}
- \overline{\overline{\alpha}}_2^{\,0}\,T_{13}
+ \overline{\overline{\alpha}}_3^{\,0}\,T_{12}) \right), 
\end{align*}
which shows the equality (\ref{eq7}).

Now we take the product with its partner 
$$ \frac{1}{\sqrt{2 |A_0|}} \left( 
\begin{vmatrix}
\alpha_1^0 & \overline{\alpha}_1^{\,0} & \overline{\overline{\alpha}}_1^{\,0} \\ 
\rule{0cm}{0.5cm} \alpha_2^0 & \overline{\alpha}_2^{\,0} & 
\overline{\overline{\alpha}}_2^{\,0} \\ 
\rule{0cm}{0.5cm} \alpha_3^0 & \overline{\alpha}_3^{\,0} & 
\overline{\overline{\alpha}}_3^{\,0}
\end{vmatrix} T_{kl}
-\begin{vmatrix}
\alpha_k^0 & \overline{\alpha}_k^{\,0} \\ 
\rule{0cm}{0.4cm} \alpha_l^0 & \overline{\alpha}_l^{\,0}
\end{vmatrix}
(\overline{\overline{\alpha}}_1^{\,0} T_{23}
-\overline{\overline{\alpha}}_2^{\,0} T_{13}
+\overline{\overline{\alpha}}_3^{\,0} T_{12})
\right), 
$$
and expand it.
Then we have 
\begin{align*}
\begin{vmatrix}
\overline{\overline{\alpha}}_i^{\,0} & \gamma^1_i \\ 
\rule{0cm}{0.5cm} \overline{\overline{\alpha}}_j^{\,0} & \gamma^1_j
\end{vmatrix} 
\begin{vmatrix}
\overline{\overline{\alpha}}_k^{\,0} & \gamma^1_k \\ 
\rule{0cm}{0.5cm} \overline{\overline{\alpha}}_l^{\,0} & \gamma^1_l
\end{vmatrix} 
= & \frac{1}{2 |A_0|} 
\begin{vmatrix}
\alpha_1^0 & \overline{\alpha}_1^{\,0} & \overline{\overline{\alpha}}_1^{\,0} \\ 
\rule{0cm}{0.5cm} \alpha_2^0 & \overline{\alpha}_2^{\,0} & 
\overline{\overline{\alpha}}_2^{\,0} \\ 
\rule{0cm}{0.5cm} \alpha_3^0 & \overline{\alpha}_3^{\,0} & 
\overline{\overline{\alpha}}_3^{\,0}
\end{vmatrix}^2 T_{ij}T_{kl} \\
& -\frac{1}{2 |A_0|}
\begin{vmatrix}
\alpha_1^0 & \overline{\alpha}_1^{\,0} & \overline{\overline{\alpha}}_1^{\,0} \\ 
\rule{0cm}{0.5cm} \alpha_2^0 & \overline{\alpha}_2^{\,0} & 
\overline{\overline{\alpha}}_2^{\,0} \\ 
\rule{0cm}{0.5cm} \alpha_3^0 & \overline{\alpha}_3^{\,0} & 
\overline{\overline{\alpha}}_3^{\,0}
\end{vmatrix}
\begin{vmatrix}
\alpha_k^0 & \overline{\alpha}_k^{\,0} \\ 
\rule{0cm}{0.4cm} \alpha_l^0 & \overline{\alpha}_l^{\,0}
\end{vmatrix}  
(\overline{\overline{\alpha}}_1^{\,0} T_{23}
-\overline{\overline{\alpha}}_2^{\,0} T_{13}
+\overline{\overline{\alpha}}_3^{\,0} T_{12})T_{ij} \\
& -\frac{1}{2 |A_0|}
\begin{vmatrix}
\alpha_1^0 & \overline{\alpha}_1^{\,0} & \overline{\overline{\alpha}}_1^{\,0} \\ 
\rule{0cm}{0.5cm} \alpha_2^0 & \overline{\alpha}_2^{\,0} & 
\overline{\overline{\alpha}}_2^{\,0} \\ 
\rule{0cm}{0.5cm} \alpha_3^0 & \overline{\alpha}_3^{\,0} & 
\overline{\overline{\alpha}}_3^{\,0}
\end{vmatrix}
\begin{vmatrix}
\alpha_i^0 & \overline{\alpha}_i^{\,0} \\ 
\rule{0cm}{0.4cm} \alpha_j^0 & \overline{\alpha}_j^{\,0}
\end{vmatrix} 
(\overline{\overline{\alpha}}_1^{\,0} T_{23}
-\overline{\overline{\alpha}}_2^{\,0} T_{13}
+\overline{\overline{\alpha}}_3^{\,0} T_{12})T_{kl} \\
& + \frac{1}{2 |A_0|}
\begin{vmatrix}
\alpha_i^0 & \overline{\alpha}_i^{\,0} \\ 
\rule{0cm}{0.4cm} \alpha_j^0 & \overline{\alpha}_j^{\,0}
\end{vmatrix}
\begin{vmatrix}
\alpha_k^0 & \overline{\alpha}_k^{\,0} \\ 
\rule{0cm}{0.4cm} \alpha_l^0 & \overline{\alpha}_l^{\,0}
\end{vmatrix}
(\overline{\overline{\alpha}}_1^{\,0} T_{23}
-\overline{\overline{\alpha}}_2^{\,0} T_{13}
+\overline{\overline{\alpha}}_3^{\,0} T_{12})^2.
\end{align*}
From Lemma~\ref{Lem_identity}, this is equal to 
\begin{align*}
& 3\,T_{ijkl} -T_{kl}T_{ij} -T_{ij}T_{kl}
+ \frac{1}{2 |A_0|}
\begin{vmatrix}
\alpha_i^0 & \overline{\alpha}_i^{\,0} \\ 
\rule{0cm}{0.4cm} \alpha_j^0 & \overline{\alpha}_j^{\,0}
\end{vmatrix}
\begin{vmatrix}
\alpha_k^0 & \overline{\alpha}_k^{\,0} \\ 
\rule{0cm}{0.4cm} \alpha_l^0 & \overline{\alpha}_l^{\,0}
\end{vmatrix}
(\overline{\overline{\alpha}}_1^{\,0} T_{23}
-\overline{\overline{\alpha}}_2^{\,0} T_{13}
+\overline{\overline{\alpha}}_3^{\,0} T_{12})^2 \\
= & T_{ijkl}
+ \frac{1}{2 |A_0|}
\begin{vmatrix}
\alpha_i^0 & \overline{\alpha}_i^{\,0} \\ 
\rule{0cm}{0.4cm} \alpha_j^0 & \overline{\alpha}_j^{\,0}
\end{vmatrix}
\begin{vmatrix}
\alpha_k^0 & \overline{\alpha}_k^{\,0} \\ 
\rule{0cm}{0.4cm} \alpha_l^0 & \overline{\alpha}_l^{\,0}
\end{vmatrix}
(\overline{\overline{\alpha}}_1^{\,0} T_{23}
-\overline{\overline{\alpha}}_2^{\,0} T_{13}
+\overline{\overline{\alpha}}_3^{\,0} T_{12})^2.
\end{align*}
Thus, we have 
\begin{align*}
& \gamma^1_{ik}\alpha_{jl}^0+\alpha_{ik}^0\gamma^1_{jl}
-\gamma^1_{il}\alpha_{jk}^0-\alpha_{il}^0\gamma^1_{jk} \\
= & T_{ijkl}
+ \frac{1}{|A_0|}
(\alpha_{ik}^0 \alpha_{jl}^0-\alpha_{il}^0 \alpha_{jk}^0) 
(\overline{\overline{\alpha}}_1^{\,0} T_{23}
-\overline{\overline{\alpha}}_2^{\,0} T_{13}
+\overline{\overline{\alpha}}_3^{\,0} T_{12})^2.
\end{align*}

As for the second polynomial 
$
\gamma^2_{ik}\alpha_{jl}^0+\alpha_{ik}^0\gamma^2_{jl}
-\gamma^2_{il}\alpha_{jk}^0-\alpha_{il}^0\gamma^2_{jk},
$
we substitute 
\begin{align*}
\gamma^2_{ij} & =\frac{1}{2|A_0|}\,\alpha_{ij}^0 \,(\overline{\alpha}_1^{\,0} T_{23} 
-\overline{\alpha}_2^{\,0} T_{13}+\overline{\alpha}_3^{\,0} T_{12})^2 
\end{align*}
into it.
Then we have directly 
\begin{align*}
& \gamma^2_{ik}\alpha_{jl}^0+\alpha_{ik}^0\gamma^2_{jl}
-\gamma^2_{il}\alpha_{jk}^0-\alpha_{il}^0\gamma^2_{jk}  \\
= & \frac{2}{2|A_0|} (\alpha_{ik}^0 \alpha_{jl}^0-\alpha_{il}^0 \alpha_{jk}^0)
(\overline{\alpha}_1^{\,0} T_{23} 
-\overline{\alpha}_2^{\,0} T_{13}+\overline{\alpha}_3^{\,0} T_{12})^2 \\
= & \frac{1}{|A_0|} (\alpha_{ik}^0 \alpha_{jl}^0-\alpha_{il}^0 \alpha_{jk}^0)
(\overline{\overline{\alpha}}_1^{\,0} T_{23} 
-\overline{\overline{\alpha}}_2^{\,0} T_{13}+\overline{\overline{\alpha}}_3^{\,0} T_{12})^2.
\end{align*}
Hence, by taking the difference, we have 
\begin{align*}
& \gamma_{ik}\alpha_{jl}^0+\alpha_{ik}^0\gamma_{jl}
-\gamma_{il}\alpha_{jk}^0-\alpha_{il}^0\gamma_{jk} \\
= & \rule{0cm}{0.4cm} (\gamma^1_{ik}\alpha_{jl}^0+\alpha_{ik}^0\gamma^1_{jl}
-\gamma^1_{il}\alpha_{jk}^0-\alpha_{il}^0\gamma^1_{jk}) 
 - (\gamma^2_{ik}\alpha_{jl}^0+\alpha_{ik}^0\gamma^2_{jl}
-\gamma^2_{il}\alpha_{jk}^0-\alpha_{il}^0\gamma^2_{jk})  \\
= & \rule{0cm}{0.4cm} T_{ijkl},
\end{align*}
which completes the proof of Lemma~\ref{Lem_dG}.
\hfill{$\Box$}

\medskip

\begin{rem}\label{Rem_S4}
(1) 
To express the solution $\gamma_{ij}$, we used two types of symbols $\gamma_i^1$ 
and $\gamma_i^2$ and take the difference $\gamma_i^1 \gamma_j^1 
- \gamma_i^2 \gamma_j^2$.
It is natural to ask whether we can decompose $\gamma_{ij}$ simply as $\gamma_i 
\gamma_j$ symbolically.
If we are allowed to use complex numbers in symbolic calculations, we have such a 
decomposition.
In fact, by putting 
$$
\gamma_i = \frac{\sqrt{-1}}{\sqrt{2|A_0|}} \{ \omega \,\alpha_i^0 \,
(\overline{\alpha}_1^{\,0} T_{23}
-\overline{\alpha}_2^{\,0} T_{13} + \overline{\alpha}_3^{\,0}T_{12}) 
+\omega^2 \,\overline{\alpha}_i^{\,0} \,(\alpha_1^0 T_{23}-\alpha_2^0T_{13}
+\alpha_3^0 T_{12}) \}
$$
($\omega$ is a complex number satisfying $\omega^2+\omega+1=0$), 
we have $\gamma_{ij} = \gamma_i \gamma_j$ for $i,j=1,2,3$.
We can directly prove that this $\gamma_{ij}$ gives the solution of the system of linear 
equations in Lemma~\ref{Lem_dG} essentially in the same way as in the above proof.
We must use the same technique and identities during the proof.
We once mention that polynomials may be decomposed into symbols in several ways 
(cf. Remark~\ref{subst_exchange} (2)).

(2) 
We can write down the solution $\gamma_{ij}$ explicitly in the polynomial form by restoring 
the formula given in Lemma~\ref{Lem_dG}, but it becomes a long complicated expression 
as we gave one example $\gamma_{12}$ above.
It is almost impossible to check that this long expression actually gives a solution of the system 
of linear equations by direct substitution, in particular by hand calculations.
This fact shows the power of the symbolic method in treating long expressions.
\end{rem}

\medskip

Now, we return to the system of linear equations (\ref{moddG})
\begin{align*}
\overline{\beta}_{ikm}\alpha_{jl}^0+\alpha_{ik}^0\overline{\beta}_{jlm}
-\overline{\beta}_{ilm}\alpha_{jk}^0-\alpha_{il}^0\overline{\beta}_{jkm}
= \overline{S}_{ijklm}.
\end{align*}
As stated before, we have to solve the same system of equations as in 
Lemma~\ref{Lem_dG} three times.
Thus, for each $m$, we put $\overline{\beta}_{ijm}= \gamma_{ij}S_m$, 
$\overline{S}_{ijklm} = T_{ijkl}S_m$, and apply Lemma~\ref{Lem_dG}, which leads us to the 
formula of $\overline{\beta}_{ijm}$.
Then, we may apply the result to the derived Gauss equation for $\alpha_{ij}^0$ by putting 
$\overline{\beta}_{ijm} = \beta_{ijm}^0$ and $\overline{S}_{ijklm} = S_{ijklm}$.
Thus, we obtain the formula of $\beta_{ijm}^0$.

\begin{prop}\label{Prop_beta}
Notations being as above, we have 
\begin{align*}
& \beta_{ijm}^0 = 
\frac{1}{2|A_0|} \left( 
\begin{vmatrix}
\alpha_1^0 & \overline{\alpha}_1^{\,0} & S_{1i} \\ 
\rule{0cm}{0.4cm} \alpha_2^0 & \overline{\alpha}_2^{\,0} & S_{2i} \\ 
\rule{0cm}{0.4cm} \alpha_3^0 & \overline{\alpha}_3^{\,0} & S_{3i} 
\end{vmatrix} 
\begin{vmatrix}
\alpha_1^0 & \overline{\alpha}_1^{\,0} & S_{1j} \\ 
\rule{0cm}{0.4cm} \alpha_2^0 & \overline{\alpha}_2^{\,0} & S_{2j} \\ 
\rule{0cm}{0.4cm} \alpha_3^0 & \overline{\alpha}_3^{\,0} & S_{3j} 
\end{vmatrix}
-\alpha_{ij}^0 \,(\overline{\alpha}_1^{\,0} S_{23} 
-\overline{\alpha}_2^{\,0} S_{13}+\overline{\alpha}_3^{\,0} S_{12})^2 \right) S_m.
\end{align*}
\end{prop}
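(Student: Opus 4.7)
The proposition is a direct corollary of Lemma~\ref{Lem_dG}. The plan is to fix $m \in \{1,2,3\}$ and view the derived Gauss equation
\[
\beta_{ikm}^0\alpha_{jl}^0+\alpha_{ik}^0\beta_{jlm}^0-\beta_{ilm}^0\alpha_{jk}^0-\alpha_{il}^0\beta_{jkm}^0 = S_{ijklm}
\]
as a linear system on the six symmetric unknowns $\gamma_{ij}^{(m)} := \beta_{ijm}^0$, with target tensor $T_{ijkl}^{(m)} := S_{ijklm}$. The tensor $T^{(m)}$ inherits the symmetries $T^{(m)}_{ijkl} = -T^{(m)}_{jikl} = -T^{(m)}_{ijlk} = T^{(m)}_{klij}$ from the corresponding identities on $S_{ijklm}$ in its first four indices recorded in Section~2, while the standing assumption $|\widetilde{R}|>0$ together with the sign choice $|A_0|>0$ puts us squarely in the setting of Lemma~\ref{Lem_dG}.

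Lemma~\ref{Lem_dG} then produces existence, uniqueness and an explicit symbolic formula $\gamma_{ij}^{(m)} = \gamma_i^1\gamma_j^1 - \gamma_i^2\gamma_j^2$, in which the symbol $T_{pq}$ plays the role of target (with $T_{pq}T_{rs} = T^{(m)}_{pqrs} = S_{pqrsm}$). To fold the three $m$-values back into a single closed form, I reintroduce the partner symbol $S_m$ obeying the rule $S_{pq}S_{rs}S_m = S_{pqrsm}$ and perform the substitution $T_{pq} \leadsto S_{pq}$ in the Lemma's output, appending a trailing factor $S_m$. Under this rule the substitution restores to the same polynomial, so after expanding $\gamma_i^1\gamma_j^1 - \gamma_i^2\gamma_j^2$ with the explicit forms
\[
\gamma_i^1 = \frac{1}{\sqrt{2|A_0|}}\begin{vmatrix}\alpha_1^0 & \overline{\alpha}_1^{\,0} & S_{1i} \\ \alpha_2^0 & \overline{\alpha}_2^{\,0} & S_{2i} \\ \alpha_3^0 & \overline{\alpha}_3^{\,0} & S_{3i}\end{vmatrix}, \qquad \gamma_i^2 = \frac{1}{\sqrt{2|A_0|}}\,\alpha_i^0(\overline{\alpha}_1^{\,0}S_{23} - \overline{\alpha}_2^{\,0}S_{13} + \overline{\alpha}_3^{\,0}S_{12}),
\]
and multiplying by $S_m$ at the end, one reads off the displayed formula for $\beta_{ijm}^0$.

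There is no conceptually hard step; the only real concern is symbol discipline. Every monomial in the final expression carries exactly two $S_{pq}$-type symbols together with the single $S_m$-type symbol, which is the correct count for restoration to a polynomial of degree one in $S_{ijklm}$ under the rule $S_{pq}S_{rs}S_m = S_{pqrsm}$. I also note that Lemma~\ref{Lem_dG} is proved without invoking any Bianchi-type identity on its target tensor, so the second Bianchi identity on $S_{ijklm}$ is not used here; it is reserved for step (C) of the outline, where it will ultimately enforce the Codazzi relation.
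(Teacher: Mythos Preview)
Your argument is correct and follows essentially the same route as the paper: fix $m$, apply Lemma~\ref{Lem_dG} with $\gamma_{ij}=\beta_{ijm}^0$ and $T_{ijkl}=S_{ijklm}$, and then repackage the three cases via the substitution $T_{pq}\leadsto S_{pq}$ together with the trailing partner $S_m$. Your remarks on symbol discipline and on the second Bianchi identity being unnecessary here are also in agreement with the paper's treatment.
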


\noindent
This gives the explicit formula of $\beta^0 = \nabla \alpha^0$, which is represented as a 
function of $\alpha^0$ and $S=\nabla R$.
We thus settled the problem in step (A).
It seems difficult to obtain the above formula on $\beta^0$ by directly taking the covariant 
derivative of $\alpha^0$ given in Proposition~\ref{RA2} (2), except for some specific cases.
(See for example, Petersen \cite{Pe}, p.95.
Also, compare with the general formula (74) in Rosenson \cite{Ro}, p.345.)

\medskip

\subsection{A new condition equivalent to Rivertz's six equalities}
We prove the next fact in step (B).
For this purpose, we introduce a quadratic polynomial $H_0(x_1,x_2,x_3)$ on $x_1,x_2,x_3$ 
symbolically by 
\begin{align*}
H_0(x_1,x_2,x_3) & : = \begin{vmatrix} x_1 & \alpha_1^0 & S_1 \\
\rule{0cm}{0.4cm} x_2 & \alpha_2^0 & S_2 \\
\rule{0cm}{0.4cm} x_3 & \alpha_3^0 & S_3 \end{vmatrix} 
(x_1S_{23}-x_2S_{13}+x_3S_{12}) 
(\alpha_1^0S_{23} - \alpha_2^0S_{13} + \alpha_3^0S_{12}).
\end{align*}
By putting 
$$
h_{ijkl}^0 := \frac{1}{\,2\,} \left( S_{ij} \begin{vmatrix} \alpha_k^0 & S_k \\
\rule{0cm}{0.4cm} \alpha_l^0 & S_l \end{vmatrix} 
+ S_{kl} \begin{vmatrix} \alpha_i^0 & S_i \\
\rule{0cm}{0.4cm} \alpha_j^0 & S_j \end{vmatrix} \right) 
(\alpha_1^0 S_{23} - \alpha_2^0 S_{13}
+\alpha_3^0 S_{12}), 
$$
the polynomial $H_0(x_1,x_2,x_3)$ is expanded as follows 
$$
H_0(x_1,x_2,x_3)  =  h_{1212}^0\,x_3{}^2 - 2 h_{1213}^0\,x_2x_3 + 2 h_{1223}^0\,x_1x_3 
+  h_{1313}^0\,x_2{}^2 - 2 h_{1323}^0\,x_1x_2 +  h_{2323}^0\,x_1{}^2.
$$
It is easy to see that the coefficients $h_{ijkl}^0$ possess the following curvature like 
properties 
\begin{align*}
& h_{ijkl}^0 = -h_{jikl}^0 = -h_{ijlk}^0, \quad h_{ijkl}^0 = h_{klij}^0.
\end{align*}
They are explicitly given by 

\begin{align*}
& h_{1212}^0=\alpha_{11}^0S_{12232}-\alpha_{12}^0S_{12132}-\alpha_{12}^0S_{12231}
+\alpha_{13}^0S_{12122}+\alpha_{22}^0S_{12131}-\alpha_{23}^0S_{12121},\\
& h_{1213}^0 = \frac{1}{\,2\,} (\alpha_{11}^0S_{13232}+\alpha_{11}^0S_{12233}
-\alpha_{12}^0S_{12133}-\alpha_{12}^0S_{13132}-\alpha_{12}^0S_{13231} \\
& \qquad \quad +\alpha_{13}^0S_{12123} +\alpha_{13}^0S_{12132}-\alpha_{13}^0S_{12231}
+\alpha_{22}^0S_{13131}-\alpha_{33}^0S_{12121}),\\
& h_{1223}^0 = \frac{1}{\,2\,} (\alpha_{11}^0S_{23232}+\alpha_{12}^0S_{12233}
-\alpha_{12}^0S_{13232}-\alpha_{12}^0S_{23231}+\alpha_{22}^0S_{13231} \\
& \qquad \quad -\alpha_{22}^0S_{12133}+\alpha_{23}^0S_{12123}+\alpha_{23}^0S_{12132}-
\alpha_{23}^0S_{12231}-\alpha_{33}^0S_{12122}),\\
& h_{1313}^0 = \alpha_{11}^0S_{13233}-\alpha_{12}^0S_{13133}-\alpha_{13}^0S_{13231}
+\alpha_{13}^0S_{12133}+\alpha_{23}^0S_{13131}-\alpha_{33}^0S_{12131},\\
& h_{1323}^0 = \frac{1}{\,2\,} (\alpha_{11}^0S_{23233}+\alpha_{13}^0S_{12233}
-\alpha_{13}^0S_{13232}-\alpha_{13}^0S_{23231}-\alpha_{22}^0S_{13133} \\
& \qquad \quad+\alpha_{23}^0S_{12133}+\alpha_{23}^0S_{13132}+\alpha_{23}^0S_{13231}-
\alpha_{33}^0S_{12231}-\alpha_{33}^0S_{12132}),\\
& h_{2323}^0 = \alpha_{12}^0S_{23233}-\alpha_{13}^0S_{23232}-\alpha_{22}^0S_{13233}
+\alpha_{23}^0S_{13232}+\alpha_{23}^0S_{12233}-\alpha_{33}^0S_{12232}.
\end{align*}

\noindent
Then, we have the following proposition, which will connect Rivertz's six polynomials and the 
Codazzi equation through $H_0(x_1,x_2,x_3)$.
(See Proposition~\ref{Prop_Codazzi_2}.)

\begin{prop}\label{Prop_Ralpha}
Under the above notations, the identity 
$$
\frac{1}{\,2\,} RS(x_1,x_2,x_3) = -|A_0| H_0(x_1,x_2,x_3)
$$
holds.
In other words, we have the equalities 
\begin{align*}
& r_1=-|A_0| h_{1212}^0, \quad r_2=-2|A_0| h_{1213}^0, \quad r_3=-2|A_0| h_{1223}^0, \\
& r_4=-|A_0| h_{1313}^0, \quad r_5=-2|A_0| h_{1323}^0, \quad r_6=-|A_0| h_{2323}^0.
\end{align*}
In particular, the equality $RS(x_1,x_2,x_3)=0$ is equivalent to $H_0(x_1,x_2,x_3)=0$.
\end{prop}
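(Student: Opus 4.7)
The plan is to derive the identity by substituting the Gauss equation for $\alpha^0$ into $RS(x_1,x_2,x_3)$ in symbolic form and then invoking a mild extension of Lemma~\ref{Lem_det_id}. Since $\alpha^0_{ij}$ satisfies the pointwise Gauss equation by construction, I would first follow the scheme used in the proof of Theorem~\ref{WTR}~(2) and substitute
$$
R_{ij}=\tfrac{1}{\sqrt{2}}(\alpha^{0(1)}\wedge\alpha^{0(2)})_{ij},\qquad \overline{R}_{ij}=\tfrac{1}{\sqrt{2}}(\alpha^{0(3)}\wedge\alpha^{0(4)})_{ij}
$$
into $RS(x_1,x_2,x_3)$, where $\alpha^{0(k)}$ ($k=1,\dots,4$) are independent symbolic copies of $\alpha^0$ with the pairing rule $\alpha^{0(k)}_i\alpha^{0(k)}_j=\alpha^0_{ij}$. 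After this substitution, $RS$ takes the form $\frac{1}{4}D_1'D_2'(x_1S_{23}-x_2S_{13}+x_3S_{12})$, where $D_1'$ and $D_2'$ are $3\times 3$ determinants whose first two columns are $(\alpha^{0(1)}\wedge\alpha^{0(2)})_{ij}$ and $(\alpha^{0(3)}\wedge\alpha^{0(4)})_{ij}$, and whose third columns are $S_{ij}$ and $(x\wedge S)_{ij}$, respectively.

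The determinant $D_2'$ fits Lemma~\ref{Lem_det_id} directly, but the third column $S_{ij}$ of $D_1'$ is a general antisymmetric $2$-tensor rather than literally a wedge. The crux is therefore a slight extension of Lemma~\ref{Lem_det_id}: for any antisymmetric $T_{ij}$,
\begin{align*}
&\det\!\bigl(\alpha^{(1)}\!\wedge\alpha^{(2)},\,\alpha^{(3)}\!\wedge\alpha^{(4)},\,T\bigr)\cdot\det\!\bigl(\alpha^{(1)}\!\wedge\alpha^{(2)},\,\alpha^{(3)}\!\wedge\alpha^{(4)},\,x\!\wedge y\bigr)\\
&\qquad =8|A|\bigl(\alpha_1T_{23}-\alpha_2T_{13}+\alpha_3T_{12}\bigr)\det(\alpha,x,y),
\end{align*}
which reduces to Lemma~\ref{Lem_det_id} itself when $T=u\wedge v$ because $\alpha_1(u\wedge v)_{23}-\alpha_2(u\wedge v)_{13}+\alpha_3(u\wedge v)_{12}=\det(\alpha,u,v)$. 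Since both sides are linear in the three independent entries $T_{12},T_{13},T_{23}$ and every antisymmetric $2$-tensor in dimension three is decomposable as a single wedge, the extension follows from Lemma~\ref{Lem_det_id} by polynomial continuation.

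Applying this extension with $T=S$ and $(x\wedge y)=(x\wedge S)$, and using $\det(\alpha^0,x,S)=-\det(x,\alpha^0,S)$, I obtain
\begin{align*}
\frac{1}{2}RS(x_1,x_2,x_3)
&=-|A_0|\det(x,\alpha^0,S)\,(x_1S_{23}-x_2S_{13}+x_3S_{12})\\
&\hspace{2.3cm}\cdot(\alpha^0_1S_{23}-\alpha^0_2S_{13}+\alpha^0_3S_{12})\\
&=-|A_0|H_0(x_1,x_2,x_3),
\end{align*}
which is the asserted identity; the six scalar equalities $r_1=-|A_0|h^0_{1212}$, $r_2=-2|A_0|h^0_{1213}$, etc., then follow by reading off the coefficients of $x_ix_j$ and comparing with Proposition~\ref{Prop_Riv}.

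The main obstacle I expect is rigorously justifying the extension of Lemma~\ref{Lem_det_id} to a non-wedge third column within the symbolic framework, where $T_{ij}=S_{ij}$ is itself a symbol that must eventually pair with other $S$-symbols and with $S_k$ appearing in $D_2'$ and in the outer scalar factor. This is resolved by noting that the extended identity is linear in the entries $T_{ij}$ while the rest of the expression is formally independent of $T$, so that applying the extension first and then restoring polynomials via $S_{ij}S_{kl}S_m=S_{ijklm}$ yields the same result as any other ordering. Once this point is accepted, what remains is a short symbolic computation entirely in the spirit of the proof of Theorem~\ref{WTR}~(2).
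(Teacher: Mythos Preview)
Your approach is correct and takes a genuinely different route from the paper's proof. The paper proceeds by direct computation: it observes that every quadratic coefficient $R_{abpq}R_{cdrs}-R_{abrs}R_{cdpq}$ appearing in the $r_i$ is precisely a $2\times 2$ minor of $\widetilde{R}$, and hence, by the inverse formula of Proposition~\ref{RA2}~(2), equals $|A_0|\,\alpha^0_{ip}$ for suitable indices. Substituting these into $r_1,r_4,r_6$ immediately yields $-|A_0|\,h^0_{1212}$, $-|A_0|\,h^0_{1313}$, $-|A_0|\,h^0_{2323}$. For $r_2,r_3,r_5$ the same substitution does not match $-2|A_0|\,h^0_{ijkl}$ term by term, so the paper first adds to $H_0$ an auxiliary polynomial $H_0'(x_1,x_2,x_3)$ that vanishes by the second Bianchi identity, after which the coefficients do match.

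Your proof, by contrast, pushes the symbolic method one step further: you substitute the Gauss equation into $R_{ij}$ and $\overline{R}_{ij}$ in $RS$ and then invoke (an extension of) Lemma~\ref{Lem_det_id} to collapse the product of the two $3\times 3$ determinants directly into $8|A_0|\,(\alpha^0\cdot S)\,\det(\alpha^0,x,S)$. The needed extension is legitimate because both sides of Lemma~\ref{Lem_det_id} depend on $x,y$ and $z,w$ only through the three components of $x\wedge y$ and $z\wedge w$ (indeed $\det(\alpha,x,y)=\alpha_1(x\wedge y)_{23}-\alpha_2(x\wedge y)_{13}+\alpha_3(x\wedge y)_{12}$), so the identity extends by linearity to arbitrary antisymmetric third columns $T,U$; decomposability of $2$-forms in dimension three is in fact not even needed. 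Your argument that the extension can be applied with the symbolic $T_{ij}=S_{ij}$ and $U_{ij}=(x\wedge S)_{ij}$, and then restored via $S_{ij}S_{kl}S_m=S_{ijklm}$, is sound for the same reason symbolic substitution works throughout the paper: the extended Lemma is a polynomial identity in $\alpha^0_{ij}$, $T_{ab}$, $U_{cd}$, and polynomial identities survive formal substitution and subsequent restoration.

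The advantage of your route is that it treats all six coefficients uniformly and never requires the ad hoc second-Bianchi correction $H_0'$; the identity $\tfrac12 RS=-|A_0|H_0$ drops out in one line once the extension of Lemma~\ref{Lem_det_id} is in place. The paper's route, on the other hand, is more elementary in that it avoids reopening the symbolic machinery at this point and relies only on the already-established inverse formula; it also makes the index-by-index equalities $r_i=-|A_0|h^0_{\cdots}$ immediately visible. Both proofs are short once the respective key lemmas are available.
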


\begin{proof}
By substituting the inverse formula of the Gauss equation (Proposition~\ref{RA2} (2)) into 
the polynomials $r_1$, $r_4$, $r_6$ in Introduction, we can directly obtain the above 
equalities for $r_1$, $r_4$, $r_6$.
(Remind that $\varepsilon/ \sqrt{|\widetilde{R}|}$ is equal to $1/|A_0|$. 
See Remark~\ref{Rem_const} (1)).

Concerning $r_2$, $r_3$, $r_5$, we define the following polynomial 
$H_0^{\prime}(x_1,x_2,x_3)$, which is actually 
zero on account of the second Bianchi identity 
\begin{align*}
H_0^{\prime}(x_1,x_2,x_3) : = &
\rule{0cm}{0.5cm} \{ x_1x_2(\alpha_{13}^0S_{23}+\alpha_{23}^0S_{13})
+x_1x_3(\alpha_{23}^0S_{12}-\alpha_{12}^0S_{23}) \\
& \hspace{0.5cm}  -x_2x_3(\alpha_{13}^0S_{12}+\alpha_{12}^0S_{13}) \} 
(S_{12}S_3-S_{13}S_2+S_{23}S_1), 
\end{align*}
and add it to $H_0(x_1,x_2,x_3)$.
Then the coefficient of $x_2x_3$ in $H_0(x_1,x_2,x_3)+H_0^{\prime}(x_1,x_2,x_3)$ is 
equal to 
\begin{align*}
& -2h_{1213}^0-(\alpha_{13}^0S_{12}+\alpha_{12}^0S_{13}) 
(S_{12}S_3-S_{13}S_2+S_{23}S_1) \\
= &  -(\alpha_{11}^0S_{13232}+\alpha_{11}^0S_{12233}-\alpha_{12}^0S_{12133}
 -\alpha_{12}^0S_{13132}-\alpha_{12}^0S_{13231} \\
& \hspace{0.6cm} +\alpha_{13}^0S_{12123}+\alpha_{13}^0S_{12132} -\alpha_{13}^0S_{12231}
+\alpha_{22}^0S_{13131}-\alpha_{33}^0S_{12121}) \\
& -(\alpha_{13}^0S_{12123}-\alpha_{13}^0S_{12132}+\alpha_{13}^0S_{12231}
+\alpha_{12}^0S_{12133}-\alpha_{12}^0S_{13132}+\alpha_{12}^0S_{13231}) \\
= & -(\alpha_{11}^0S_{12233}+\alpha_{11}^0S_{13232}-2\alpha_{12}^0S_{13132}
+\alpha_{22}^0S_{13131}+2\alpha_{13}^0S_{12123}  -\alpha_{33}^0S_{12121}).
\end{align*}
By substituting the inverse formula of the Gauss equation into this expression, we know that 
it is equal to  $r_2/|A_0|$.
This shows the equality $r_2=-2|A_0| h_{1213}^0$.
Remaining two equalities for $r_3$ and $r_5$ can be obtained in the same way.
\end{proof}

\begin{rem}\label{dependent}
Six polynomials $h_{ijkl}^0$ are not independent.
In fact, they satisfy the following relation:
$$
\alpha_{33}^0 h_{1212}^0  - 2\alpha_{23}^0 h_{1213}^0 + 2\alpha_{13}^0 h_{1223}^0 
+ \alpha_{22}^0 h_{1313}^0 - 2\alpha_{12}^0 h_{1323}^0 + \alpha_{11}^0 h_{2323}^0=0.
$$
It takes a little labor to examine this relation by direct calculations, since $h_{ijkl}^0$ are lengthy.
But this fact can be easily verified as follows.
We substitute $x_i=\overline{\alpha}_i^{\,0}$ $(i=1,2,3)$ into 
\begin{align*}
& H_0(x_1,x_2,x_3) \\
= & h_{1212}^0 x_3{}^2  - 2h_{1213}^0 x_2x_3 + 2h_{1223}^0 x_1x_3 
+ h_{1313}^0x_2{}^2 - 2h_{1323}^0x_1x_2  + h_{2323}^0x_1{}^2  \\
= & \begin{vmatrix} x_1 & \alpha_1^0 & S_1 \\
\rule{0cm}{0.4cm} x_2 & \alpha_2^0 & S_2 \\
\rule{0cm}{0.4cm} x_3 & \alpha_3^0 & S_3 \end{vmatrix} 
(x_1S_{23}-x_2S_{13}+x_3S_{12}) (\alpha_1^0S_{23} - \alpha_2^0S_{13} + \alpha_3^0S_{12}).
\end{align*}
Then we have 
\begin{align*}
& \alpha_{33}^0 h_{1212}^0  - 2\alpha_{23}^0 h_{1213}^0 + 2\alpha_{13}^0 h_{1223}^0 
+ \alpha_{22}^0 h_{1313}^0 - 2\alpha_{12}^0 h_{1323}^0 + \alpha_{11}^0 h_{2323}^0 \\
= & \begin{vmatrix} \overline{\alpha}_1^{\,0} & \alpha_1^0 & S_1 \\
\rule{0cm}{0.4cm} \overline{\alpha}_2^{\,0} & \alpha_2^0 & S_2 \\
\rule{0cm}{0.4cm} \overline{\alpha}_3^{\,0} & \alpha_3^0 & S_3 \end{vmatrix} 
(\overline{\alpha}_1^{\,0}S_{23}-\overline{\alpha}_2^{\,0}S_{13}
+\overline{\alpha}_3^{\,0}S_{12}) 
(\alpha_1^0S_{23} - \alpha_2^0S_{13} + \alpha_3^0S_{12}).
\end{align*}
In the last expression, by the exchanging method, we can see that it is zero, which shows 
the desired equality.
\end{rem}

\medskip

\subsection{Verification of the Codazzi equation}
Now, we are in a final position to prove Proposition~\ref{Prop_Codazzi_1}.
We prove the fact in step (C), and show the following proposition.

\begin{prop}\label{Prop_Codazzi_2}
Under the same notations and assumptions as above, the following equality holds:
$$
|A_0|(\beta_{ipq}^0-\beta_{iqp}^0) 
= -(\alpha_{i1}^0h_{23pq}^0-\alpha_{i2}^0h_{13pq}^0+\alpha_{i3}^0h_{12pq}^0).
$$
\end{prop}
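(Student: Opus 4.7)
The plan is to substitute the explicit formula for $\beta_{ijm}^0$ from Proposition~\ref{Prop_beta} into the left-hand side and reduce the resulting symbolic expression to match the right-hand side, using the cofactor expansions and determinantal identities developed in Section~3.

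Write
$$\beta_{ijm}^0 = \tfrac{1}{2|A_0|}(D_iD_j - \alpha_{ij}^0 C)S_m,\quad D_i := \begin{vmatrix} \alpha_1^0 & \overline{\alpha}_1^{\,0} & S_{1i} \\ \alpha_2^0 & \overline{\alpha}_2^{\,0} & S_{2i} \\ \alpha_3^0 & \overline{\alpha}_3^{\,0} & S_{3i} \end{vmatrix},\quad C := \bigl(\overline{\alpha}_1^{\,0}S_{23} - \overline{\alpha}_2^{\,0}S_{13}+\overline{\alpha}_3^{\,0}S_{12}\bigr)^2.$$
Direct substitution, pulling out the common factors $D_i$ and $C$, gives
$$2|A_0|(\beta_{ipq}^0 - \beta_{iqp}^0) = D_i(D_pS_q - D_qS_p) - C(\alpha_{ip}^0S_q - \alpha_{iq}^0S_p).$$
The symbolic second Bianchi identity $S_{pq}S_k+S_{qk}S_p+S_{kp}S_q=0$ yields $S_{kp}S_q - S_{kq}S_p = -S_{pq}S_k$, and applying this to the cofactor expansion of $D_p$ and $D_q$ along their third columns produces the compact form $D_pS_q - D_qS_p = -S_{pq}G$, where
$$G := \begin{vmatrix} \alpha_1^0 & \overline{\alpha}_1^{\,0} & S_1 \\ \alpha_2^0 & \overline{\alpha}_2^{\,0} & S_2 \\ \alpha_3^0 & \overline{\alpha}_3^{\,0} & S_3 \end{vmatrix}.$$

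Next, using the definition of $h_{ijkl}^0$ given above Proposition~\ref{Prop_Ralpha} and introducing the abbreviations $E := \alpha_1^0S_{23} - \alpha_2^0S_{13}+\alpha_3^0S_{12}$, $\Delta_{ab} := \alpha_a^0S_b - \alpha_b^0S_a$, $\Psi_i := \alpha_{i1}^0S_{23} - \alpha_{i2}^0S_{13}+\alpha_{i3}^0S_{12}$, and $\Theta_i := \alpha_{i1}^0\Delta_{23}-\alpha_{i2}^0\Delta_{13}+\alpha_{i3}^0\Delta_{12}$, a direct expansion shows
$$\alpha_{i1}^0h_{23pq}^0-\alpha_{i2}^0h_{13pq}^0+\alpha_{i3}^0h_{12pq}^0 = \tfrac{E}{2}\bigl(\Psi_i\Delta_{pq}+\Theta_iS_{pq}\bigr).$$
Combining the two computations, Proposition~\ref{Prop_Codazzi_2} reduces to the single symbolic identity
$$S_{pq}D_iG + C(\alpha_{ip}^0S_q - \alpha_{iq}^0S_p) = E\bigl(\Psi_i\Delta_{pq}+\Theta_iS_{pq}\bigr).$$

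This identity is quartic in $\alpha^0$ and quadratic in $S$, and its verification proceeds by expanding $D_i$ and $G$ along their third columns and then applying the determinantal identities of Lemma~\ref{Lem_identity} together with the exchanging method, in the same spirit as the proof of Lemma~\ref{Lem_dG}. The main obstacle is combinatorial rather than conceptual: the symbolic expressions involve several distinct $\alpha^0$ and $\overline{\alpha}^{\,0}$ partner symbols along with both paired ($S_{ij}$) and unpaired ($S_k$) $S$-symbols, and the pairings must be tracked carefully throughout the reduction. Once this bookkeeping is handled, both sides of the reduced identity coincide and the proposition follows.
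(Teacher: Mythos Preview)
Your initial reductions are correct and clean: the formula $D_pS_q-D_qS_p=-S_{pq}G$ via the symbolic second Bianchi identity is a nice observation (the paper does not use this shortcut), and your expansion of the right-hand side as $\tfrac{E}{2}(\Psi_i\Delta_{pq}+\Theta_iS_{pq})$ is accurate. So the proposition is correctly reduced to the single identity you display.

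The gap is that you do not actually verify that identity. You say it ``proceeds by expanding $D_i$ and $G$ \ldots\ and then applying the determinantal identities of Lemma~\ref{Lem_identity} together with the exchanging method,'' but neither of those tools is obviously adequate here: Lemma~\ref{Lem_identity} involves three distinct $\alpha$-type symbols, whereas your identity involves only two ($\alpha^0$ and $\overline{\alpha}^{\,0}$), and the exchanging method establishes vanishing, not equality of two nonzero expressions. The paper's own proof shows that this last step is precisely where the real work lies. It takes a different path: rather than staying in $\alpha$-symbols, it invokes Lemma~\ref{Lem_det}, which converts products such as $D_iD_p$ into $R$-symbolic form, performs the Bianchi simplification there, and then uses Lemma~\ref{Lem_det}\,(1) to come back. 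That round trip through $R$-symbols, together with a further nontrivial rearrangement (the derivation of the equality labeled~(\ref{eq4}) in the paper), is what closes the argument. Your route may well be completable, but as written it stops exactly at the hard part; to finish it you would either need an analogue of Lemma~\ref{Lem_det} or a direct term-by-term expansion, neither of which is mere ``bookkeeping.''
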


\noindent
Since Rivertz's six equalities $r_1 = \cdots =r_6=0$ are equivalent to $h_{ijkl}^0=0$ 
(Proposition~\ref{Prop_Ralpha}), we have immediately the Codazzi equation 
$\beta_{ipq}^0= \beta_{iqp}^0$ from this proposition, which completes the proof of 
Proposition~\ref{Prop_Codazzi_1}.

We can directly verify the equalities in this proposition by restoring the formula 
in Proposition~\ref{Prop_beta}, and by using the second Bianchi identity.
But actually it requires tremendous labor, and it is hard to verify these equalities for 
all indices $i, p, q$ only by hand calculations.
In the following, to prove Proposition~\ref{Prop_Codazzi_2}, we once use the symbolic method.
By this device, we can prove the proposition by using only elementary properties of 
determinants, though to complete the calculations, it requires some technical ideas, 
which can be seen below.
We may say that the subsequent symbolic calculations constitute an essential part in 
the proof of our main theorem.

We also remark that the representation of $|A_0|(\beta_{ipq}^0-\beta_{iqp}^0)$ as a 
linear combination of $\alpha_{ij}^0 h_{pqrs}^0$ given in 
Proposition~\ref{Prop_Codazzi_2} is not uniquely determined, because there exists a 
non-trivial identity between $h_{ijkl}^0$ (cf. Remark~\ref{dependent}).

To prove Proposition~\ref{Prop_Codazzi_2}, we first prepare the following lemma.

\begin{lem}\label{Lem_det}
Assume that $\{i,j,k\}$ and $\{ p,q,r \}$ are cyclic permutations of $\{1,2,3\}$.
Then the following identities on symbols hold.
\begin{align}
\tag{1} & \alpha_i^0  \begin{vmatrix} \alpha_1^0 & \overline{\alpha}_1^{\,0} & S_1 \\
\rule{0cm}{0.4cm} \alpha_2^0 & \overline{\alpha}_2^{\,0} & S_2 \\
\rule{0cm}{0.4cm} \alpha_3^0 & \overline{\alpha}_3^{\,0} & S_3 
\end{vmatrix} 
(\overline{\alpha}_1^{\,0} S_{23} - \overline{\alpha}_2^{\,0} S_{13}
+\overline{\alpha}_3^{\,0} S_{12}) \\
& \hspace{3cm} =  -\begin{vmatrix} R_{ij} & S_{ij} \\
R_{ik} & S_{ik} \end{vmatrix} (S_1R_{23}-S_2R_{13}+S_3R_{12}). \notag
\end{align}

\begin{equation}
\tag{2} 
\begin{vmatrix}
\alpha_1^0 & \overline{\alpha}_1^{\,0} & S_{1i} \\ 
\rule{0cm}{0.4cm} \alpha_2^0 & \overline{\alpha}_2^{\,0} & S_{2i} \\ 
\rule{0cm}{0.4cm} \alpha_3^0 & \overline{\alpha}_3^{\,0} & S_{3i} 
\end{vmatrix} 
\begin{vmatrix}
\alpha_1^0 & \overline{\alpha}_1^{\,0} & S_{1p} \\ 
\rule{0cm}{0.4cm} \alpha_2^0 & \overline{\alpha}_2^{\,0} & S_{2p} \\ 
\rule{0cm}{0.4cm} \alpha_3^0 & \overline{\alpha}_3^{\,0} & S_{3p} 
\end{vmatrix}
 =  2 \begin{vmatrix} R_{ij} & S_{ij} \\
R_{ik} & S_{ik} \end{vmatrix}
 \begin{vmatrix} R_{pq} & S_{pq} \\
 R_{pr} & S_{pr} \end{vmatrix}.
\end{equation}
\end{lem}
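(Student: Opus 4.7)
The central device for both identities is the cofactor expansion along the third column of a $3\times 3$ determinant whose first two columns are $(\alpha_1^0,\alpha_2^0,\alpha_3^0)^{T}$ and $(\overline{\alpha}_1^{\,0},\overline{\alpha}_2^{\,0},\overline{\alpha}_3^{\,0})^{T}$. For any triple $v=(v_1,v_2,v_3)$,
\begin{equation*}
\begin{vmatrix}\alpha_1^0 & \overline{\alpha}_1^{\,0} & v_1 \\ \alpha_2^0 & \overline{\alpha}_2^{\,0} & v_2 \\ \alpha_3^0 & \overline{\alpha}_3^{\,0} & v_3\end{vmatrix}=\sqrt{2}\,\bigl(v_1R_{23}-v_2R_{13}+v_3R_{12}\bigr),
\end{equation*}
after identifying $\sqrt{2}\,R_{pq}=\alpha_p^0\overline{\alpha}_q^{\,0}-\alpha_q^0\overline{\alpha}_p^{\,0}$ as in Section~3.3. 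Second, a direct check case-by-case for $i\in\{1,2,3\}$, using $S_{ii}=0$ and $S_{ji}=-S_{ij}$, establishes the auxiliary symbolic identity
\begin{equation*}
S_{1i}R_{23}-S_{2i}R_{13}+S_{3i}R_{12}=-\begin{vmatrix}R_{ij} & S_{ij}\\ R_{ik} & S_{ik}\end{vmatrix}
\end{equation*}
for every cyclic permutation $(i,j,k)$ of $(1,2,3)$.

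From these preliminaries, identity~(2) is immediate. The column expansion converts each of the two $3\times 3$ determinants on the LHS into a factor of the form $\sqrt{2}\,(S_{1\bullet}R_{23}-S_{2\bullet}R_{13}+S_{3\bullet}R_{12})$, which the auxiliary identity rewrites as $-\sqrt{2}$ times the corresponding $2\times 2$ minor. Multiplying the two factors, the two sign changes cancel and the two $\sqrt{2}$'s combine to produce the factor $2$ on the RHS.

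For identity~(1), applying the column expansion with $v_p=S_p$ to the big determinant $D$ recasts the LHS as $\sqrt{2}\,\alpha_i^0\,E\,(S_1R_{23}-S_2R_{13}+S_3R_{12})$, while the auxiliary identity rewrites the RHS as $(S_{1i}R_{23}-S_{2i}R_{13}+S_{3i}R_{12})(S_1R_{23}-S_2R_{13}+S_3R_{12})$. Using cyclic symmetry I would reduce to the case $i=1$, expand both sides, and group monomials by the basis $\{S_{12},S_{13},S_{23}\}\times\{S_1,S_2,S_3\}$. After restoring the $\alpha$-pairs on the LHS via $\alpha_p^0\alpha_q^0=\overline{\alpha}_p^{\,0}\overline{\alpha}_q^{\,0}=\alpha_{pq}^0$ and applying $R_{pq}R_{rs}=R_{pqrs}$ on the RHS, each of the nine coefficients becomes a quadratic combination of $\alpha_{\cdot\cdot}^0$'s that collapses, via the Gauss equation, to a single curvature component $R_{\cdot\cdot\cdot\cdot}$; a coefficient-by-coefficient comparison then closes the case.

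The main obstacle is identity~(1). Unlike identity~(2), no clean cancellation of the common factor $(S_1R_{23}-S_2R_{13}+S_3R_{12})$ is available at the symbolic level, because the $R$-symbols it carries must pair with distinct $R$-partners on each side---with the $\alpha^0\wedge\overline{\alpha}^{\,0}$ fragments of $D$ on the LHS and with the $2\times 2$ minor's $R$-symbols on the RHS---so the equality materializes only once those pairings are resolved through the Gauss equation. The bookkeeping of the resulting nine coefficient checks is routine but delicate, and illustrates precisely where the symbolic method saves effort compared with manipulating the fully restored polynomial expressions.
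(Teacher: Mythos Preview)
Your argument for identity~(2) is essentially the paper's: both expand each $3\times 3$ determinant along its third column, reducing to products of $2\times 2$ minors in $\alpha^0,\overline{\alpha}^{\,0}$, and then use that such paired minors restore to $2R_{abde}=2R_{ab}R_{de}$. Your use of the shorthand $\sqrt{2}\,R_{pq}=(\alpha^0\wedge\overline{\alpha}^{\,0})_{pq}$ is harmless here precisely because the two minors pair with each other.

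For identity~(1) the approaches diverge. Your plan---write $D=\sqrt{2}(S_1R_{23}-S_2R_{13}+S_3R_{12})$, then unwind by a nine-term coefficient check in the basis $\{S_{12},S_{13},S_{23}\}\times\{S_1,S_2,S_3\}$ for $i=1$---does work (I spot-checked, e.g., the $S_{12}S_3$ coefficient), but the intermediate expression $\sqrt{2}\,\alpha_i^0\,E\,(S_1R_{23}-\cdots)$ is symbolically ill-formed: the $\alpha^0$ hidden in each ``$R_{pq}$'' must pair with the external $\alpha_i^0$, and the $\overline{\alpha}^{\,0}$ with one inside $E$, so the $R$-shorthand is only a mnemonic here, not a legitimate intermediate object. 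You recognize this, but it means the real work is the nine Gauss-equation checks you defer.

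The paper avoids that bookkeeping by a single uniform computation: it multiplies each cofactor term of $D$ \emph{together with} the external factors $\alpha_i^0$ and $E$ at once, showing
\[
\alpha_i^0\begin{vmatrix}\alpha_p^0&\overline{\alpha}_p^{\,0}\\ \alpha_q^0&\overline{\alpha}_q^{\,0}\end{vmatrix}(\overline{\alpha}_1^{\,0}S_{23}-\overline{\alpha}_2^{\,0}S_{13}+\overline{\alpha}_3^{\,0}S_{12})
=-R_{pq}\begin{vmatrix}R_{ij}&S_{ij}\\ R_{ik}&S_{ik}\end{vmatrix}
\]
for every cyclic $(p,q,r)$; summing against $S_r$ then gives~(1) directly. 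The gain is that all nine of your coefficient checks collapse into one short expansion, because the $\alpha^0$-pairing (with $\alpha_i^0$) and $\overline{\alpha}^{\,0}$-pairing (with $E$) are performed simultaneously rather than deferred.
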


\begin{proof}
(1) 
We first calculate 
$$
\alpha_i^0 \begin{vmatrix} \alpha_p^0 & \overline{\alpha}_p^{\,0} \\
\rule{0cm}{0.4cm} \alpha_q^0 & \overline{\alpha}_q^{\,0} \end{vmatrix}
(\overline{\alpha}_1^{\,0}S_{23}- \overline{\alpha}_2^{\,0} S_{13} 
+ \overline{\alpha}_3^{\,0}S_{12}).
$$
Since $\{ i,j,k \}$ is a cyclic permutation of $\{ 1,2,3 \}$, it is equal to 
\begin{align*}
& \rule{0cm}{0.4cm} \alpha_i^0 (\alpha_p^0 \,\overline{\alpha}_q^{\,0} 
- \alpha_q^0 \,\overline{\alpha}_p^{\,0})
(\overline{\alpha}_i^{\,0}S_{jk}+ \overline{\alpha}_j^{\,0} S_{ki} 
+ \overline{\alpha}_k^{\,0}S_{ij}) \\
= & \rule{0cm}{0.4cm} \alpha_{ip}^0\alpha_{iq}^0S_{jk}+\alpha_{ip}^0\alpha_{jq}^0S_{ki}
+\alpha_{ip}^0\alpha_{kq}^0S_{ij}
-\alpha_{iq}^0\alpha_{ip}^0S_{jk}-\alpha_{iq}^0\alpha_{jp}^0S_{ki}
-\alpha_{iq}^0\alpha_{kp}^0S_{ij} \\
= & \rule{0cm}{0.4cm} (\alpha_{ip}^0\alpha_{jq}^0-\alpha_{iq}^0\alpha_{jp}^0)S_{ki}
-(\alpha_{iq}^0\alpha_{kp}^0-\alpha_{ip}^0\alpha_{kq}^0)S_{ij} \\
= & \rule{0cm}{0.4cm} R_{ijpq}S_{ki}-R_{kipq}S_{ij} \\
= & R_{pq} \begin{vmatrix} R_{ij} & S_{ij} \\
R_{ki} & S_{ki} \end{vmatrix} 
=  -R_{pq} \begin{vmatrix} R_{ij} & S_{ij} \\
R_{ik} & S_{ik} \end{vmatrix}.
\end{align*}
Then, we have 
\begin{align*}
& \alpha_i^0  \begin{vmatrix} \alpha_1^0 & \overline{\alpha}_1^{\,0} & S_1 \\
\rule{0cm}{0.4cm} \alpha_2^0 & \overline{\alpha}_2^{\,0} & S_2 \\
\rule{0cm}{0.4cm} \alpha_3^0 & \overline{\alpha}_3^{\,0} & S_3 
\end{vmatrix} 
(\overline{\alpha}_1^{\,0} S_{23} - \overline{\alpha}_2^{\,0} S_{13}
+\overline{\alpha}_3^{\,0} S_{12}) \\
= & \alpha_i^0  \left( \begin{vmatrix} \alpha_2^0 & \overline{\alpha}_2^{\,0} \\
\rule{0cm}{0.4cm} \alpha_3^0 & \overline{\alpha}_3^{\,0} 
\end{vmatrix} S_1
- \begin{vmatrix} \alpha_1^0 & \overline{\alpha}_1^{\,0} \\
\rule{0cm}{0.4cm} \alpha_3^0 & \overline{\alpha}_3^{\,0} 
\end{vmatrix} S_2
+ \begin{vmatrix} \alpha_1^0 & \overline{\alpha}_1^{\,0} \\
\rule{0cm}{0.4cm} \alpha_2^0 & \overline{\alpha}_2^{\,0} 
\end{vmatrix} S_3 \right)
(\overline{\alpha}_1^{\,0} S_{23} - \overline{\alpha}_2^{\,0} S_{13}
+\overline{\alpha}_3^{\,0} S_{12}) \\
= & - \begin{vmatrix} R_{ij} & S_{ij} \\
R_{ik} & S_{ik} \end{vmatrix} (S_1R_{23}-S_2R_{13}+S_3R_{12}).
\end{align*}

(2) 
Since $\{i,j,k\}$ and $\{ p,q,r \}$ are cyclic permutations of $\{1,2,3\}$, 
the left hand side is equal to 
\begin{align*}
& \begin{vmatrix}
\alpha_i^0 & \overline{\alpha}_i^{\,0} & S_{ii} \\ 
\rule{0cm}{0.4cm} \alpha_j^0 & \overline{\alpha}_j^{\,0} & S_{ji} \\ 
\rule{0cm}{0.4cm} \alpha_k^0 & \overline{\alpha}_k^{\,0} & S_{ki} 
\end{vmatrix} 
\begin{vmatrix}
\alpha_p^0 & \overline{\alpha}_p^{\,0} & S_{pp} \\ 
\rule{0cm}{0.4cm} \alpha_q^0 & \overline{\alpha}_q^{\,0} & S_{qp} \\ 
\rule{0cm}{0.4cm} \alpha_r^0 & \overline{\alpha}_r^{\,0} & S_{rp} 
\end{vmatrix} \\
= & \left( \begin{vmatrix}
\alpha_i^0 & \overline{\alpha}_i^{\,0} \\ 
\rule{0cm}{0.4cm} \alpha_j^0 & \overline{\alpha}_j^{\,0} 
\end{vmatrix} S_{ki} 
- \begin{vmatrix}
\alpha_i^0 & \overline{\alpha}_i^{\,0} \\ 
\rule{0cm}{0.4cm} \alpha_k^0 & \overline{\alpha}_k^{\,0} 
\end{vmatrix} S_{ji} \right)
\left( \begin{vmatrix}
\alpha_p^0 & \overline{\alpha}_p^{\,0} \\ 
\rule{0cm}{0.4cm} \alpha_q^0 & \overline{\alpha}_q^{\,0} 
\end{vmatrix} S_{rp} 
- \begin{vmatrix}
\alpha_p^0 & \overline{\alpha}_p^{\,0} \\ 
\rule{0cm}{0.4cm} \alpha_r^0 & \overline{\alpha}_r^{\,0} 
\end{vmatrix} S_{qp} \right).
\end{align*}
Here, the coefficient of $S_{ki}S_{rp}$ is given by  
\begin{align*}
\begin{vmatrix}
\alpha_i^0 & \overline{\alpha}_i^{\,0} \\ 
\rule{0cm}{0.4cm} \alpha_j^0 & \overline{\alpha}_j^{\,0} 
\end{vmatrix} 
\begin{vmatrix}
\alpha_p^0 & \overline{\alpha}_p^{\,0} \\ 
\rule{0cm}{0.4cm} \alpha_q^0 & \overline{\alpha}_q^{\,0} 
\end{vmatrix} 
= & (\alpha_i^0 \overline{\alpha}_j^{\,0}- \alpha_j^0 \overline{\alpha}_i^{\,0})
(\alpha_p^0 \overline{\alpha}_q^{\,0}- \alpha_q^0 \overline{\alpha}_p^{\,0}) \\
= & 2(\alpha_{ip}^0\alpha_{jq}^0-\alpha_{iq}^0\alpha_{jp}^0) \\
= & 2R_{ijpq} = 2R_{ij}R_{pq}.
\end{align*}
Remaining coefficients are similarly calculated, and finally we know that the above is equal to 
$$
2(R_{ij}S_{ki}-R_{ik}S_{ji})(R_{pq}S_{rp}-R_{pr}S_{qp})
 =  2 \begin{vmatrix} R_{ij} & S_{ij} \\
R_{ik} & S_{ik} \end{vmatrix}
 \begin{vmatrix} R_{pq} & S_{pq} \\
 R_{pr} & S_{pr} \end{vmatrix}, 
$$
which is the desired equality.
\end{proof}

{\it Proof of Proposition~\ref{Prop_Codazzi_2}.}
In case $p=q$, the equality clearly holds, and if we exchange $p$ and $q$, then the both 
sides change their signs.
Hence we may assume that $\{p,q,r\}$ is a cyclic permutation of $\{1,2,3\}$ by taking 
a suitable $r$.
We also assume that $\{i,j,k\}$ is a cyclic permutation of $\{1,2,3\}$ by taking suitable 
$j$, $k$.
We fix such $\{ p,q,r \}$ and $\{ i,j,k \}$ throughout this proof.

First of all, we calculate 
$$
-2(\alpha_{i1}^0h_{23pq}^0-\alpha_{i2}^0h_{13pq}^0+\alpha_{i3}^0h_{12pq}^0),
$$
which is twice the right hand side of the equality.
It is equal to 
$$
-2(\alpha_{ii}^0h_{jkpq}^0+\alpha_{ij}^0h_{kipq}^0+\alpha_{ik}^0h_{ijpq}^0).
$$
We substitute 
$$
h_{ijkl}^0 = \frac{1}{\,2\,} \left( S_{ij} \begin{vmatrix} \overline{\alpha}_k^{\,0} & S_k \\
\rule{0cm}{0.4cm} \overline{\alpha}_l^{\,0} & S_l \end{vmatrix} 
+ S_{kl} \begin{vmatrix} \overline{\alpha}_i^{\,0} & S_i \\
\rule{0cm}{0.4cm} \overline{\alpha}_j^{\,0} & S_j \end{vmatrix} \right) 
(\overline{\alpha}_1^{\,0} S_{23} - \overline{\alpha}_2^{\,0} S_{13}
+\overline{\alpha}_3^{\,0} S_{12}), 
$$
where we replace $\alpha_i^0$ in the definition of $h_{ijkl}^0$ by another symbol 
$\overline{\alpha}_i^{\,0}$.
Then it becomes 
\begin{align}
& -\left\{ \alpha_{ii}^0 \left( S_{jk} \begin{vmatrix} \overline{\alpha}_p^{\,0} & S_p \\
\rule{0cm}{0.4cm} \overline{\alpha}_q^{\,0} & S_q \end{vmatrix} 
+ S_{pq} \begin{vmatrix} \overline{\alpha}_j^{\,0} & S_j \\
\rule{0cm}{0.4cm} \overline{\alpha}_k^{\,0} & S_k \end{vmatrix} \right) 
+ \alpha_{ij}^0 \left( S_{ki} \begin{vmatrix} \overline{\alpha}_p^{\,0} & S_p \\
\rule{0cm}{0.4cm} \overline{\alpha}_q^{\,0} & S_q \end{vmatrix} 
+ S_{pq} \begin{vmatrix} \overline{\alpha}_k^{\,0} & S_k \\
\rule{0cm}{0.4cm} \overline{\alpha}_i^{\,0} & S_i \end{vmatrix} \right) \right. \label{eq8} \\
& \hspace{0.5cm} + \left. \alpha_{ik}^0 \left( S_{ij} \begin{vmatrix} \overline{\alpha}_p^{\,0} 
& S_p \\ \rule{0cm}{0.4cm} \overline{\alpha}_q^{\,0} & S_q \end{vmatrix} 
+ S_{pq} \begin{vmatrix} \overline{\alpha}_i^{\,0} & S_i \\
\rule{0cm}{0.4cm} \overline{\alpha}_j^{\,0} & S_j \end{vmatrix} \right) \right\}
(\overline{\alpha}_1^{\,0} S_{23} - \overline{\alpha}_2^{\,0} S_{13}
+\overline{\alpha}_3^{\,0} S_{12}) \notag \\
= & -S_{pq} \left( \alpha_{ii}^0 \begin{vmatrix} \overline{\alpha}_j^{\,0} & S_j \\
\rule{0cm}{0.4cm} \overline{\alpha}_k^{\,0} & S_k \end{vmatrix} 
+\alpha_{ij}^0 \begin{vmatrix} \overline{\alpha}_k^{\,0} & S_k \\
\rule{0cm}{0.4cm} \overline{\alpha}_i^{\,0} & S_i \end{vmatrix}
+\alpha_{ik}^0 \begin{vmatrix} \overline{\alpha}_i^{\,0} & S_i \\
\rule{0cm}{0.4cm} \overline{\alpha}_j^{\,0} & S_j \end{vmatrix} \right) 
(\overline{\alpha}_1^{\,0} S_{23} - \overline{\alpha}_2^{\,0} S_{13}
+\overline{\alpha}_3^{\,0} S_{12}) \notag \\
& - \alpha_i^0 \begin{vmatrix} \overline{\alpha}_p^{\,0} & S_p \\
\rule{0cm}{0.4cm} \overline{\alpha}_q^{\,0} & S_q \end{vmatrix}
(\alpha_i^0 S_{jk}+\alpha_j^0 S_{ki}+\alpha_k^0 S_{ij})
(\overline{\alpha}_1^{\,0} S_{23} - \overline{\alpha}_2^{\,0} S_{13}
+\overline{\alpha}_3^{\,0} S_{12}) \notag \\
= & -\alpha_i^0 S_{pq}  \begin{vmatrix} \alpha_1^0 & \overline{\alpha}_1^{\,0} & S_1 \\
\rule{0cm}{0.4cm} \alpha_2^0 & \overline{\alpha}_2^{\,0} & S_2 \\
\rule{0cm}{0.4cm} \alpha_3^0 & \overline{\alpha}_3^{\,0} & S_3 
\end{vmatrix} 
(\overline{\alpha}_1^{\,0} S_{23} - \overline{\alpha}_2^{\,0} S_{13}
+\overline{\alpha}_3^{\,0} S_{12}) \notag \\
& -\alpha_i^0  \begin{vmatrix} \overline{\alpha}_p^{\,0} & S_p \\
\rule{0cm}{0.4cm} \overline{\alpha}_q^{\,0} & S_q \end{vmatrix}
(\alpha_1^0 S_{23}-\alpha_2^0 S_{13}+\alpha_3^0 S_{12})
(\overline{\alpha}_1^{\,0} S_{23} - \overline{\alpha}_2^{\,0} S_{13}
+\overline{\alpha}_3^{\,0} S_{12}). \notag
\end{align}

Next, we calculate $2|A_0|(\beta_{ipq}^0-\beta_{iqp}^0)$ by using the formula in  
Proposition~\ref{Prop_beta}.
Then we have 
\begin{align*}
& 2|A_0|(\beta_{ipq}^0-\beta_{iqp}^0) \\
= & \begin{vmatrix}
\alpha_1^0 & \overline{\alpha}_1^{\,0} & S_{1i} \\ 
\rule{0cm}{0.4cm} \alpha_2^0 & \overline{\alpha}_2^{\,0} & S_{2i} \\ 
\rule{0cm}{0.4cm} \alpha_3^0 & \overline{\alpha}_3^{\,0} & S_{3i} 
\end{vmatrix} 
\begin{vmatrix}
\alpha_1^0 & \overline{\alpha}_1^{\,0} & S_{1p} \\ 
\rule{0cm}{0.4cm} \alpha_2^0 & \overline{\alpha}_2^{\,0} & S_{2p} \\ 
\rule{0cm}{0.4cm} \alpha_3^0 & \overline{\alpha}_3^{\,0} & S_{3p} 
\end{vmatrix} 
S_q
- 
\begin{vmatrix}
\alpha_1^0 & \overline{\alpha}_1^{\,0} & S_{1i} \\ 
\rule{0cm}{0.4cm} \alpha_2^0 & \overline{\alpha}_2^{\,0} & S_{2i} \\ 
\rule{0cm}{0.4cm} \alpha_3^0 & \overline{\alpha}_3^{\,0} & S_{3i} 
\end{vmatrix}  
\begin{vmatrix}
\alpha_1^0 & \overline{\alpha}_1^{\,0} & S_{1q} \\ 
\rule{0cm}{0.4cm} \alpha_2^0 & \overline{\alpha}_2^{\,0} & S_{2q} \\ 
\rule{0cm}{0.4cm} \alpha_3^0 & \overline{\alpha}_3^{\,0} & S_{3q} 
\end{vmatrix} 
S_p \\
&- \begin{vmatrix} \alpha_{ip}^0 & S_p \\
\rule{0.0cm}{0.4cm} \alpha_{iq}^0 & S_q \end{vmatrix}
(\overline{\alpha}_1^{\,0}S_{23}- \overline{\alpha}_2^{\,0} S_{13} 
+ \overline{\alpha}_3^{\,0}S_{12})^2.
\end{align*}
By Lemma~\ref{Lem_det} (2), it is  equal to 
\begin{align}
& 2 \begin{vmatrix} R_{ij} & S_{ij} \\
R_{ik} & S_{ik} \end{vmatrix}
 \begin{vmatrix} R_{pq} & S_{pq} \\
 R_{pr} & S_{pr} \end{vmatrix} S_q
 - 
 2 \begin{vmatrix} R_{ij} & S_{ij} \\
R_{ik} & S_{ik} \end{vmatrix}
 \begin{vmatrix} R_{qr} & S_{qr} \\
 R_{qp} & S_{qp} \end{vmatrix} S_p \label{eq9} \\
&- \begin{vmatrix} \alpha_{ip}^0 & S_p \\
\rule{0.0cm}{0.4cm} \alpha_{iq}^0 & S_p \end{vmatrix} 
(\overline{\alpha}_1^{\,0}S_{23}- \overline{\alpha}_2^{\,0} S_{13} 
+ \overline{\alpha}_3^{\,0}S_{12})^2. \notag
\end{align}
The first line of (\ref{eq9}) is equal to 
\begin{align}
& 2 \begin{vmatrix} R_{ij} & S_{ij} \\
 R_{ik} & S_{ik} \end{vmatrix} 
 \left( \begin{vmatrix} R_{pq} & S_{pq} \\
 R_{pr} & S_{pr} \end{vmatrix}S_q
 + 
 \begin{vmatrix} R_{qr} & S_{qr} \\
 R_{pq} & S_{pq} \end{vmatrix}S_p \right) \label{eq3} \\
 = & 2 \begin{vmatrix} R_{ij} & S_{ij} \\
 R_{ik} & S_{ik} \end{vmatrix} 
 \{ S_{pq}(S_pR_{qr}+S_qR_{rp}+S_rR_{pq}) \notag \\
& \hspace{3cm}  - R_{pq} (S_pS_{qr}+S_qS_{rp}+S_rS_{pq}) \}  \notag \\
  = & 2 S_{pq} \begin{vmatrix} R_{ij} & S_{ij} \\
 R_{ik} & S_{ik} \end{vmatrix} 
 (S_pR_{qr}+S_qR_{rp}+S_rR_{pq})  \notag \\
  = & 2 S_{pq} \begin{vmatrix} R_{ij} & S_{ij} \\
 R_{ik} & S_{ik} \end{vmatrix} 
 (S_1R_{23}-S_2R_{13}+S_3R_{12}) \notag \\
= &  -2\alpha_i^0 S_{pq}  \begin{vmatrix} \alpha_1^0 & \overline{\alpha}_1^{\,0} & S_1 \\
\rule{0cm}{0.4cm} \alpha_2^0 & \overline{\alpha}_2^{\,0} & S_2 \\
\rule{0cm}{0.4cm} \alpha_3^0 & \overline{\alpha}_3^{\,0} & S_3 
\end{vmatrix} 
(\overline{\alpha}_1^{\,0} S_{23} - \overline{\alpha}_2^{\,0} S_{13}
+\overline{\alpha}_3^{\,0} S_{12}), \notag
 \end{align}
on account of the second Bianchi identity and Lemma~\ref{Lem_det} (1).

Next, we calculate the second line of (\ref{eq9}).
We show the equality 
\begin{align}
& - \alpha_i^0 \begin{vmatrix} \alpha_{p}^0 & S_p \\
\rule{0.0cm}{0.4cm} \alpha_{q}^0 & S_q \end{vmatrix}
(\overline{\alpha}_1^{\,0}S_{23}- \overline{\alpha}_2^{\,0} S_{13} 
+ \overline{\alpha}_3^{\,0}S_{12})^2 \label{eq4} \\
= & - \alpha_i^0 \begin{vmatrix} 
\overline{\alpha}_p^{\,0} & S_p \\ 
\rule{0cm}{0.4cm} \overline{\alpha}_q^{\,0} & S_q 
\end{vmatrix}
(\alpha_1^0S_{23}- \alpha_2^0 S_{13} + \alpha_3^0S_{12}) 
(\overline{\alpha}_1^{\,0}S_{23}- \overline{\alpha}_2^{\,0} S_{13} 
+ \overline{\alpha}_3^{\,0}S_{12}) \notag \\
& \hspace{1cm} + \alpha_i^0 S_{pq}  \begin{vmatrix} \alpha_1^0 & 
\overline{\alpha}_1^{\,0} & S_1 \\
\rule{0cm}{0.4cm} \alpha_2^0 & \overline{\alpha}_2^{\,0} & S_2 \\
\rule{0cm}{0.4cm} \alpha_3^0 & \overline{\alpha}_3^{\,0} & S_3
\end{vmatrix} 
(\overline{\alpha}_1^{\,0} S_{23} - \overline{\alpha}_2^{\,0} S_{13}
+\overline{\alpha}_3^{\,0} S_{12}). \notag
\end{align}
We calculate 
$$
\begin{vmatrix}
\alpha_p^0 & S_p \\ \rule{0cm}{0.4cm} \alpha_q^0 & S_q 
\end{vmatrix}
(\overline{\alpha}_1^{\,0}S_{23}- \overline{\alpha}_2^{\,0} S_{13} 
+ \overline{\alpha}_3^{\,0}S_{12}), 
$$
which is a symbolic piece of the left hand side of (\ref{eq4}).
We can easily examine the equality 
\begin{align*}
& \begin{vmatrix}
\alpha_p^0 & S_p \\ \rule{0cm}{0.4cm} \alpha_q^0 & S_q 
\end{vmatrix}
(\overline{\alpha}_p^{\,0}S_{qr}+ \overline{\alpha}_q^{\,0} S_{rp} 
+ \overline{\alpha}_r^{\,0}S_{pq}) 
- 
\begin{vmatrix} 
\overline{\alpha}_p^{\,0} & S_p \\ 
\rule{0cm}{0.4cm} \overline{\alpha}_q^{\,0} & S_q 
\end{vmatrix}
(\alpha_p^0S_{qr}+ \alpha_q^0 S_{rp} + \alpha_r^0S_{pq}) \\
& = \begin{vmatrix}
\alpha_p^0 & \overline{\alpha}_p^{\,0} \\
\rule{0cm}{0.4cm} \alpha_q^0 & \overline{\alpha}_q^{\,0} 
\end{vmatrix} S_pS_{qr}
+\begin{vmatrix}
\alpha_p^0 & \overline{\alpha}_p^{\,0} \\
\rule{0cm}{0.4cm} \alpha_q^0 & \overline{\alpha}_q^{\,0}
\end{vmatrix} S_qS_{rp}
+ \begin{vmatrix}
\alpha_p^0 & \overline{\alpha}_p^{\,0} \\
\rule{0cm}{0.4cm} \alpha_r^0 & \overline{\alpha}_r^{\,0} 
\end{vmatrix} S_qS_{pq}
- \begin{vmatrix}
\alpha_q^0 & \overline{\alpha}_q^{\,0} \\
\rule{0cm}{0.4cm} \alpha_r^0 & \overline{\alpha}_r^{\,0} 
\end{vmatrix} S_pS_{pq}.
\end{align*}
From this equality we have 
\begin{align*}
& \begin{vmatrix}
\alpha_p^0 & S_p \\ \rule{0cm}{0.4cm} \alpha_q^0 & S_q 
\end{vmatrix}
(\overline{\alpha}_p^{\,0}S_{qr}+ \overline{\alpha}_q^{\,0} S_{rp} 
+ \overline{\alpha}_r^{\,0}S_{pq}) \\
= &  \begin{vmatrix} 
\overline{\alpha}_p^{\,0} & S_p \\ 
\rule{0cm}{0.4cm} \overline{\alpha}_q^{\,0} & S_q 
\end{vmatrix}
(\alpha_p^0S_{qr}+ \alpha_q^0 S_{rp} + \alpha_r^0S_{pq}) \\
 & + \begin{vmatrix}
\alpha_p^0 & \overline{\alpha}_p^{\,0} \\
\rule{0cm}{0.4cm} \alpha_q^0 & \overline{\alpha}_q^{\,0} 
\end{vmatrix} S_pS_{qr}
+\begin{vmatrix}
\alpha_p^0 & \overline{\alpha}_p^{\,0} \\
\rule{0cm}{0.4cm} \alpha_q^0 & \overline{\alpha}_q^{\,0}
\end{vmatrix} S_qS_{rp}
+ \begin{vmatrix}
\alpha_p^0 & \overline{\alpha}_p^{\,0} \\
\rule{0cm}{0.4cm} \alpha_r^0 & \overline{\alpha}_r^{\,0} 
\end{vmatrix} S_qS_{pq}
- \begin{vmatrix}
\alpha_q^0 & \overline{\alpha}_q^{\,0} \\
\rule{0cm}{0.4cm} \alpha_r^0 & \overline{\alpha}_r^{\,0} 
\end{vmatrix} S_pS_{pq} \\
& + \begin{vmatrix}
\alpha_p^0 & \overline{\alpha}_p^{\,0} \\
\rule{0cm}{0.4cm} \alpha_q^0 & \overline{\alpha}_q^{\,0} 
\end{vmatrix} S_rS_{pq} 
- \begin{vmatrix}
\alpha_p^0 & \overline{\alpha}_p^{\,0} \\
\rule{0cm}{0.4cm} \alpha_q^0 & \overline{\alpha}_q^{\,0} 
\end{vmatrix} S_rS_{pq} \\
= &  \begin{vmatrix} 
\overline{\alpha}_p^{\,0} & S_p \\ 
\rule{0cm}{0.4cm} \overline{\alpha}_q^{\,0} & S_q 
\end{vmatrix}
(\alpha_p^0S_{qr}+ \alpha_q^0 S_{rp} + \alpha_r^0S_{pq}) 
+ \begin{vmatrix}
\alpha_p^0 & \overline{\alpha}_p^{\,0} \\
\rule{0cm}{0.4cm} \alpha_q^0 & \overline{\alpha}_q^{\,0} 
\end{vmatrix} (S_pS_{qr}+S_qS_{rp}+S_rS_{pq}) \\
& - S_{pq} \left( \begin{vmatrix}
\alpha_q^0 & \overline{\alpha}_q^{\,0} \\
\rule{0cm}{0.4cm} \alpha_r^0 & \overline{\alpha}_r^{\,0} 
\end{vmatrix} S_p 
- \begin{vmatrix}
\alpha_p^0 & \overline{\alpha}_p^{\,0} \\
\rule{0cm}{0.4cm} \alpha_r^0 & \overline{\alpha}_r^{\,0} 
\end{vmatrix} S_q
+ \begin{vmatrix}
\alpha_p^0 & \overline{\alpha}_p^{\,0} \\
\rule{0cm}{0.4cm} \alpha_q^0 & \overline{\alpha}_q^{\,0} 
\end{vmatrix} S_r \right) \\
= &  \begin{vmatrix} 
\overline{\alpha}_p^{\,0} & S_p \\ 
\rule{0cm}{0.4cm} \overline{\alpha}_q^{\,0} & S_q 
\end{vmatrix}
(\alpha_p^0S_{qr}+ \alpha_q^0 S_{rp} + \alpha_r^0S_{pq}) 
 - S_{pq} \begin{vmatrix}
\alpha_p^0 & \overline{\alpha}_p^{\,0} & S_p \\
\rule{0cm}{0.4cm} \alpha_q^0 & \overline{\alpha}_q^{\,0} & S_q \\
\rule{0cm}{0.4cm} \alpha_r^0 & \overline{\alpha}_r^{\,0} & S_r
\end{vmatrix}, 
\end{align*}
by using the second Bianchi identity.
Hence, by multiplying the remaining symbolic piece 
$$
- \alpha_i^0 (\overline{\alpha}_1^{\,0}S_{23}- \overline{\alpha}_2^{\,0} S_{13} 
+ \overline{\alpha}_3^{\,0}S_{12})
= - \alpha_i^0 (\overline{\alpha}_p^{\,0}S_{qr}+ \overline{\alpha}_q^{\,0} S_{rp} 
+ \overline{\alpha}_r^{\,0}S_{pq})
$$
to the above, the second line of (\ref{eq9}) is equal to 
\begin{align*}
& - \alpha_i^0 \begin{vmatrix} 
\overline{\alpha}_p^{\,0} & S_p \\ 
\rule{0cm}{0.4cm} \overline{\alpha}_q^{\,0} & S_q 
\end{vmatrix}
(\alpha_p^0S_{qr}+ \alpha_q^0 S_{rp} + \alpha_r^0S_{pq}) 
(\overline{\alpha}_p^{\,0}S_{qr}+ \overline{\alpha}_q^{\,0} S_{rp} 
+ \overline{\alpha}_r^{\,0}S_{pq}) \\
& + \alpha_i^0 S_{pq} \begin{vmatrix}
\alpha_p^0 & \overline{\alpha}_p^{\,0} & S_p \\
\rule{0cm}{0.4cm} \alpha_q^0 & \overline{\alpha}_q^{\,0} & S_q \\
\rule{0cm}{0.4cm} \alpha_r^0 & \overline{\alpha}_r^{\,0} & S_r
\end{vmatrix} 
(\overline{\alpha}_p^{\,0}S_{qr}+ \overline{\alpha}_q^{\,0} S_{rp} 
+ \overline{\alpha}_r^{\,0}S_{pq}) \\
= & - \alpha_i^0 \begin{vmatrix} 
\overline{\alpha}_p^{\,0} & S_p \\ 
\rule{0cm}{0.4cm} \overline{\alpha}_q^{\,0} & S_q 
\end{vmatrix}
(\alpha_1^0S_{23}- \alpha_2^0 S_{13} + \alpha_3^0S_{12}) 
(\overline{\alpha}_1^{\,0}S_{23}- \overline{\alpha}_2^{\,0} S_{13} 
+ \overline{\alpha}_3^{\,0}S_{12}) \\
& + \alpha_i^0 S_{pq} \begin{vmatrix}
\alpha_1^0 & \overline{\alpha}_1^{\,0} & S_1 \\
\rule{0cm}{0.4cm} \alpha_2^0 & \overline{\alpha}_2^{\,0} & S_2 \\
\rule{0cm}{0.4cm} \alpha_3^0 & \overline{\alpha}_3^{\,0} & S_3
\end{vmatrix} 
(\overline{\alpha}_1^{\,0}S_{23}- \overline{\alpha}_2^{\,0} S_{13} 
+ \overline{\alpha}_3^{\,0}S_{12}), 
\end{align*}
which shows the equality (\ref{eq4}).

Taking the sum of (\ref{eq3}) and (\ref{eq4}), we have 
\begin{align*}
& 2|A_0|(\beta_{112}^0-\beta_{121}^0) \\
& = - \alpha_i^0 S_{pq} \begin{vmatrix}
\alpha_1^0 & \overline{\alpha}_1^{\,0} & S_1 \\
\rule{0cm}{0.4cm} \alpha_2^0 & \overline{\alpha}_2^{\,0} & S_2 \\
\rule{0cm}{0.4cm} \alpha_3^0 & \overline{\alpha}_3^{\,0} & S_3
\end{vmatrix} 
(\overline{\alpha}_1^{\,0}S_{23}- \overline{\alpha}_2^{\,0} S_{13} 
+ \overline{\alpha}_3^{\,0}S_{12}) \\
&  \hspace{0.5cm} - \alpha_i^0 \begin{vmatrix} 
\overline{\alpha}_p^{\,0} & S_p \\ 
\rule{0cm}{0.4cm} \overline{\alpha}_q^{\,0} & S_q 
\end{vmatrix}
(\alpha_1^0S_{23}- \alpha_2^0 S_{13} + \alpha_3^0S_{12}) 
(\overline{\alpha}_1^{\,0}S_{23}- \overline{\alpha}_2^{\,0} S_{13} 
+ \overline{\alpha}_3^{\,0}S_{12}). 
 \end{align*}
Therefore, combined with the equality (\ref{eq8}), we have 
$$
2|A_0|(\beta_{ipq}^0-\beta_{iqp}^0) 
= -2(\alpha_{i1}^0h_{23pq}^0-\alpha_{i2}^0h_{13pq}^0+\alpha_{i3}^0h_{12pq}^0), 
$$
which completes the proof of Proposition~\ref{Prop_Codazzi_2}.
\hfill{$\Box$}

\medskip

\begin{rem}\label{Rem_rep}
Several expressions that were appeared in the proof of the main theorem possess their own 
representation theoretic meaning.
In this remark let $V$ be an abstract $3$-dimensional real vector space, 
and we denote by 
$V^*$ its dual space.
Let $S_{\lambda_1 \lambda_2 \lambda_3}(x_1,x_2,x_3)$ be the Schur function 
corresponding to the partition $\lambda=(\lambda_1,\lambda_2,\lambda_3)$ 
($\lambda_1 \geq \lambda_2 \geq \lambda_3 \geq 0$).
In the following we abbreviate it simply as $S_{\lambda_1 \lambda_2 \lambda_3}$.
It is well known that there is a one-to-one correspondence between the set of 
equivalence classes of $GL(V)$-irreducible polynomial representations and the set of 
Schur functions.
(For the definition of the Schur function, and the representation theory of the group 
$GL(V)$, see for example \cite{Mac}, \cite{A1}.)
We often confuse the Schur function with the corresponding representation space of 
$GL(V)$, by the abuse use of notation.

(1) 
Curvature tensor $R$ and its covariant derivative $S = \nabla R$ can be considered as elements 
of the spaces 
\begin{align*}
& \{ R \in S^2(\wedge^2 V^*)\: | \: \mathfrak{S}_{Y,Z,W} R(X,Y,Z,W)=0 \} \\
& \{ S \in S^2(\wedge^2 V^*) \otimes V^* \: | \: \mathfrak{S}_{Z,W,U} S(X,Y,Z,W,U)=0 \}, 
\end{align*}
respectively, where $S^2(\wedge^2 V^*)$ is a symmetric tensor product of the space 
$\wedge^2 V^*$ and $\mathfrak{S}_{Y,Z,W}$ means the cyclic sum with respect to $Y,Z,W$.
(Actually, in dimension three, the first Bianchi identity trivially holds, and we may omit it in 
the above definition.)
The general linear group $GL(V)$ acts naturally on these spaces.
These representations are both irreducible, and the corresponding Schur functions are 
$S_{22}$ and $S_{32}$, respectively.
(cf. \cite{KT}. 
Do not confuse the Schur function with the symbols $S_{ij}$ appeared in 
the decomposition of the tensor $S_{ijklm}=S_{ij}S_{kl}S_m$.)

The determinant $|\widetilde{R}|$ is a cubic polynomial of the curvature tensor $R$, 
and hence it is an element of the space $S^3(S_{22})$.
The $GL(V)$-irreducible decomposition of the space $S^3(S_{22})$ is given by the 
plethysm $S_3 \circ S_{22}$, which is explicitly given by 
$$
S_3 \circ S_{22} = S_{66} + S_{642} + S_{444}, 
$$
in case ${\rm dim} \: V = 3$ (cf. \cite{A1}, p.112).
Namely, it splits into three $GL(V)$-irreducible invariant subspaces.
(For the plethysm of Schur functions, see also \cite{Mac}.)
Among these spaces, the cubic polynomial $|\widetilde{R}|$ gives a basis of the 
$1$-dimensional representation space $S_{444}$ (\cite{A1}, p.131$\sim$132).
This fact implies that $|\widetilde{R}|$ is a relative invariant of $GL(V)$. 
(Note that the representation space $S_{\lambda_1 \lambda_2 \lambda_3}$ is 
$1$-dimensional if and only if $\lambda_1=\lambda_2=\lambda_3$.)

The determinant $|A|$ is a cubic polynomial of the symmetric $2$-tensor $\alpha_{ij}$.
Since $\alpha_{ij}$ is an element of the space $S_2$, $|A|$ is contained in the 
space $S^3(S_2)$.
Its $GL(V)$-irreducible decomposition is given by the plethysm 
$$
S_3 \circ S_2 = S_6 + S_{42} + S_{222}, 
$$
and actually $|A|$ is a basis of the $1$-dimensional space $S_{222}$, which implies that 
$|A|$ is also a relative invariant of $GL(V)$.

(2) 
Rivertz's six polynomials $r_1 \sim r_6$ are elements of the space $S^2(S_{22}) \otimes 
S_{32}$, since they are quadratic on $R$ and linear on $S$.
The $GL(V)$-irreducible decomposition of the space $S^2(S_{22})$ is given by the 
plethysm 
$$
S_2 \circ S_{22} = S_{44} + S_{422},
$$
and the tensor product with $S_{32}$ can be calculated by using the Littlewood-Richardson rule.
The result is given by 
\begin{align}
(S_{44} + S_{422}) S_{32} = & 
(S_{76} + S_{751} + S_{742} + S_{661} + S_{652} + S_{643}) \label{LR1} \\
& + (S_{742} + S_{652} + S_{643} + S_{553} + S_{544}) \notag \\
= & S_{76} + S_{751} + 2S_{742} + S_{661} + 2S_{652} + 2S_{643} \notag \\
& \hspace{1cm} + S_{553} + S_{544}, \notag
\end{align}
where the coefficients imply their multiplicities.
Among these spaces, Rivertz's six polynomials are contained in $S_{553}$, and give a basis 
of this $6$-dimensional representation space.
We can examine this fact by applying the method developed in \cite{A1}.
(See also \cite{R1}.)

We constructed the polynomial $RS(x_1,x_2,x_3)$, which plays an essential role in the proof 
of the main theorem.
This polynomial can be considered as an element of the  tensor product of the space $S_{553}$ 
and the space of quadratic polynomials $S_2$, consisting of $x_ix_j$.
The tensor product is decomposed into $GL(V)$-irreducible factors as 
$$
S_{553}S_2 = S_{753} + S_{654} + S_{555}.
$$ 
Among these three spaces $RS(x_1,x_2,x_3)$ is contained in $S_{555}$, and it is also 
a relative invariant of $GL(V)$.

(3) 
By substituting the Gauss equation and the derived Gauss equation into six 
polynomials $r_1 \sim r_6$, we obtain homogeneous polynomials of $\alpha_{ij}$ and 
$\beta_{ijk}$ with degrees five and one, respectively.
Hence, these polynomials are contained in the space $S^5(S_2) \otimes S_3$.
Its $GL(V)$-irreducible decomposition is given by 
\begin{align}
(S_5 \circ S_2)S_3 = & (S_{10} + S_{82} + S_{64} + S_{622} + S_{442}) S_3 \label{LR2} \\
= & S_{13} + S_{12\,1} + 2S_{11\,2} + 2S_{10\,3} + S_{10\,21} + 2S_{94} + S_{931} \notag \\ 
& \hspace{0.2cm} + 2S_{922} + 2S_{85} + 2S_{841} + 2S_{832} + S_{76} + S_{751} 
+ 3S_{742} \notag \\ 
& \hspace{0.2cm}  + S_{661} + 2S_{652} + 2S_{643} +S_{544}. \notag
\end{align}
The above substitution defines a $GL(V)$-equivariant map from the space $S^2(S_{22}) 
\otimes S_{32}$ to $(S_5 \circ S_2) \otimes S_3$.
Hence each irreducible component of $S^2(S_{22}) \otimes S_{32}$ is mapped to the 
irreducible subspace of $(S_5 \circ S_2) \otimes S_3$ with the same Schur function, or is 
mapped to zero.
Comparing (\ref{LR1}) with (\ref{LR2}), we know that the space $S_{553}$ in 
$S^2(S_{22}) \otimes S_{32}$ must be mapped to zero, since the space 
$S_{553}$ does not appear 
in $(S_5 \circ S_2) \otimes S_3$.
This gives a representation theoretic proof of Rivertz's result (Theorem~\ref{WTR} (2)).
The inverse formula of the Gauss equation is also explained in \cite{A1}, p.132 from the 
representation theoretic viewpoint.
\end{rem}

\bigskip

We finally state a corollary concerning local isometric embeddings of $M^3$ into the 
$4$-dimensional space of constant curvature $c$, as a direct consequence of our main 
theorem.
We put 
$$
\overline{R}_{ijkl} := R_{ijkl} - c(\delta_{ik}\delta_{jl}-\delta_{il}\delta_{jk}), 
$$
and define $|\overline{R}|$ by 
$$
|\overline{R}|:= \begin{vmatrix}
\overline{R}_{1212} & \overline{R}_{1213} & \overline{R}_{1223}\\
\overline{R}_{1213} & \overline{R}_{1313} & \overline{R}_{1323}\\
\overline{R}_{1223} & \overline{R}_{1323} & \overline{R}_{2323}
\end{vmatrix} 
= \begin{vmatrix}
R_{1212}-c & R_{1213} & R_{1223}\\
R_{1213} & R_{1313}-c & R_{1323}\\
R_{1223} & R_{1323} & R_{2323}-c
\end{vmatrix}.
$$
In addition, we substitute the above $\overline{R}_{ijkl}$ into $R_{ijkl}$ of Rivertz's six 
polynomials, and express them as $\overline{r}_1 \sim \overline{r}_6$.
For example, we have 
{\small 
\begin{align*}
& \overline{r}_1=\{R_{1213}(R_{2323}-c) - R_{1223}R_{1323}\}S_{12121}
- \{R_{1213}R_{1323} - R_{1223}(R_{1313}-c)\}S_{12122} \\
& - \{(R_{1212}-c)(R_{2323}-c)- R_{1223}{}^2\}S_{12131}
+ \{(R_{1212}-c)R_{1323} - R_{1213}R_{1223}\}S_{12132} \\
& +\{(R_{1212}-c)R_{1323} - R_{1213}R_{1223}\}S_{12231}
- \{(R_{1212}-c)(R_{1313}-c) - R_{1213}{}^2\}S_{12232}.
\end{align*}
}
Then we have the following corollary.

\begin{cor}\label{MainCor}
Let $(M^3,g)$ be a $3$-dimensional simply connected Riemannian manifold.

\smallskip

$(1)$ 
If $M^3$ can be isometrically embedded into the $4$-dimensional space of constant 
curvature $c$, then the inequality $|\overline{R}| \geq 0$ and the six equalities $\overline{r}_1 = 
\cdots = \overline{r}_6=0$ hold everywhere.

\smallskip

$(2)$ 
If $|\overline{R}| > 0$ and $\overline{r}_1 = \cdots = \overline{r}_6=0$ hold, then there exists 
a local isometric embedding of $M^3$ into the $4$-dimensional space of constant 
curvature $c$.
\end{cor}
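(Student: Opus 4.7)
The plan is to reduce the corollary to the main theorem by the substitution $R_{ijkl} \rightsquigarrow \overline{R}_{ijkl}$. The key observation is that, in a $4$-dimensional space of constant curvature $c$, the Gauss equation reads
\begin{equation*}
\overline{R}_{ijkl} := R_{ijkl} - c(\delta_{ik}\delta_{jl}-\delta_{il}\delta_{jk}) = \alpha_{ik}\alpha_{jl}-\alpha_{il}\alpha_{jk},
\end{equation*}
which is formally identical to the Euclidean Gauss equation with $R$ replaced by $\overline{R}$. Moreover, since the constant-curvature correction $c(\delta_{ik}\delta_{jl}-\delta_{il}\delta_{jk})$ is covariantly constant, one has $\nabla_m \overline{R}_{ijkl} = \nabla_m R_{ijkl} = S_{ijklm}$, so the derived Gauss equation
\begin{equation*}
S_{ijklm} = \beta_{ikm}\alpha_{jl}+\alpha_{ik}\beta_{jlm}-\beta_{ilm}\alpha_{jk}-\alpha_{il}\beta_{jkm}
\end{equation*}
also retains its Euclidean form, as already noted in Remark~\ref{subst_exchange} (4). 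I shall invoke the fundamental theorem of hypersurfaces for the space of constant curvature (a standard extension of Theorem~\ref{FT}; see \cite{DT}, \cite{KN}), which asserts that a simply connected $(M^m,g)$ admits a local isometric embedding into the space of constant curvature $c$ precisely when a symmetric $2$-tensor $\alpha$ solves the above Gauss and the Codazzi equations.

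For part (1), assume such an isometric embedding exists with second fundamental form $\alpha$. Then $\overline{R}$ satisfies the Euclidean-form Gauss equation, so Proposition~\ref{RA2}~(1), applied with $R$ replaced by $\overline{R}$, gives $|\overline{R}| = |A|^2 \geq 0$. The computations in the proof of Theorem~\ref{WTR}~(2) use only the Gauss equation and its covariant derivative and the symbolic rule $R_{ij}R_{kl} = \overline{R}_{ij}\overline{R}_{kl} = R_{ijkl}$; replacing $R_{ijkl}$ by $\overline{R}_{ijkl}$ throughout (which is precisely how $\overline{r}_1,\dots,\overline{r}_6$ are defined) yields $\overline{r}_1 = \cdots = \overline{r}_6 = 0$.

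For part (2), I carry over the proof of Proposition~\ref{Prop_R} with $R \rightsquigarrow \overline{R}$. Under the hypothesis $|\overline{R}|>0$, the inverse formula (Proposition~\ref{RA2}~(2) applied to $\overline{R}$) produces a symmetric $2$-tensor field $\alpha^0$ on $M^3$ which solves the pointwise equation $\overline{R}_{ijkl} = \alpha^0_{ik}\alpha^0_{jl}-\alpha^0_{il}\alpha^0_{jk}$, with the sign of $\varepsilon$ chosen so that $|A_0|>0$. The proofs of Lemma~\ref{Lem_dG}, Proposition~\ref{Prop_beta}, Proposition~\ref{Prop_Ralpha} and Proposition~\ref{Prop_Codazzi_2} are formal identities in symbols subject to the Gauss equation, the derived Gauss equation, and the second Bianchi identity for $S$; none of these is altered by the substitution $R \rightsquigarrow \overline{R}$ (the second Bianchi identity for $S$ is intrinsic to $M$ and independent of $c$). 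Hence the conditions $|\overline{R}|>0$ and $\overline{r}_1=\cdots=\overline{r}_6=0$ force $\beta^0_{ijk}=\beta^0_{ikj}$, i.e., $\alpha^0$ satisfies the Codazzi equation.

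With both the Gauss equation for the space of constant curvature and the Codazzi equation in hand, the fundamental theorem of hypersurfaces for the space of constant curvature supplies the desired local isometric embedding, completing the proof. There is essentially no new obstacle beyond checking that the constant-curvature correction term is covariantly constant and that the fundamental theorem extends to this ambient; the rest is a transparent repackaging of \S4.
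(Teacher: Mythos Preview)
Your proof is correct and follows essentially the same approach as the paper: replace $R_{ijkl}$ by $\overline{R}_{ijkl}$ throughout the proofs of Theorem~\ref{WTR} and Proposition~\ref{Prop_R}, observe that the derived Gauss equation is unchanged because the correction term is parallel, and appeal to the fundamental theorem of hypersurfaces in a space of constant curvature. The paper's own justification is exactly this substitution argument, stated a bit more tersely.
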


\noindent
Since the fundamental theorem of hypersurfaces also holds in the case where the ambient 
space is the space of constant curvature (see, for example \cite{D}), we can easily verify 
the above corollary by replacing $R_{ijkl}$ by $\overline{R}_{ijkl}$ in the 
proof of Theorem ~\ref{WTR} and Proposition~\ref{Prop_R}.
Note that the Gauss and Codazzi equations are expressed as 
\begin{align*}
& R_{ijkl}-c(\delta_{ik}\delta_{jl}-\delta_{il}\delta_{jk}) 
= \alpha_{ik}\alpha_{jl}-\alpha_{il}\alpha_{jk}, \\
& \beta_{ijk} = \beta_{ikj}, 
\end{align*}
respectively in this case, where $\beta_{ijk} = (\nabla_{X_k}\alpha)(X_i,X_j)$ as before.
Since the space of constant curvature is locally symmetric, the derived Gauss 
equation takes the completely same form 
$$
S_{ijklm} = \beta_{ikm}\alpha_{jl}+\alpha_{ik}\beta_{jlm}
-\beta_{ilm}\alpha_{jk}-\alpha_{il}\beta_{jkm}.
$$
Concerning this subject, see also \cite{R3}, \cite{AH2}.

\bigskip
\bigskip

\section{Applications}

\subsection{Warped product manifold}
As an application of our main theorem, we consider warped product manifolds 
$M = B \times_f F$, where $B$ is the base, $F$ is the fiber and $f$ is 
a warping function.
We assume $f>0$ throughout.
(For details on warped product manifolds, see for example, O'Neill \cite{ON}, Chen \cite{Chen2}, 
Dajczer-Tojeiro \cite{DT}.)
In \S 5.1 we drop the assumption ${\rm dim}\:M = 3$.
In terms of local coordinates, the Riemannian metric of $M$ is expressed as 
$$
ds^2= \sum_{i,j} g_{ij}dx_idx_j + f(x)^2 \sum_{p,q} h_{pq} dy_pdy_q,
$$
where $g_{ij}$ and $h_{pq}$ are the Riemannian metrics of $B$ and $F$, respectively.
In the following we always assume that the indices move in the range $i,j,k,l,m = 1,2, \cdots, 
{\rm dim} \: B$ and $p,q,r,s,t = 1,2, \cdots, {\rm dim} \: F$.
We first consider the general dimensional case, not restricting to the three dimension.
We put 
$$
X_i = \frac{\partial \;\;}{\partial x_i}, \qquad Y_p= \frac{\partial \;\;}{\partial y_p},
$$
and let $\nabla^B_{X_i}X_j$, $\nabla^F_{Y_p}Y_q$ be the Levi-Civita connections of $B$ and 
$F$, respectively.
We may consider $X_i$, $Y_p$ as vector fields on $M$.
Then, as is well known (cf. \cite{ON}, \cite{Chen2}, \cite{DT}), the Levi-Civita connection 
$\nabla$ of $M$ is given by 
\begin{align*}
& \nabla_{X_i} X_j = \nabla^B_{X_i} X_j, \\
& \nabla_{X_i}Y_p = \nabla_{Y_p} X_i = f^{-1}f_iY_p, \\
& \nabla_{Y_p} Y_q = \nabla^F_{Y_p} Y_q -f\,h_{pq} \,{\rm grad}\: f,
\end{align*}
\noindent
where $f_i = \frac{\partial f}{\partial x_i}$, and ${\rm grad} \: f$ is the gradient of $f$, i.e., 
the vector field on $B$ satisfying $g({\rm grad}\: f,X) = Xf$ for any vector field $X$ on $B$.
In local coordinates, we have ${\rm grad}\: f = \sum_{i,j} g^{ij}f_iX_j$, where $(g^{ij})$ is 
the inverse matrix of $(g_{ij})$.
The gradient ${\rm grad}\: f$ can be naturally considered as a vector field on $M$.
We remark that $h_{pq}$ is given by $h(Y_p,Y_q)$ as a tensor field on $F$, and if we consider 
$Y_p$, $Y_q$ as tangent vector fields on $M$, their inner product is given by 
$f(x)^2h_{pq}$, depending on $x \in B$.

In terms of the curvatures $R^B$, $R^F$ of $B$ and $F$, respectively, the curvature $R$ of 
$M$ is given by the following (see \cite{ON}, \cite{Chen2}, \cite{DT}):

\begin{align*}
& R(X_i,X_j)X_k = R^B(X_i,X_j)X_k, \\
& R(X_i,X_j)Y_p = 0, \\
& R(X_i,Y_p)X_j = f^{-1} H^f(X_i,X_j)Y_p, \\
& R(X_i,Y_p)Y_q = -f \,h_{pq} (\nabla^B_{X_i} \,{\rm grad} \: f), \\
& R(Y_p,Y_q)X_i = 0, \\
& R(Y_p,Y_q)Y_r = R^F(Y_p,Y_q)Y_r - || {\rm grad}\: f ||^2 (h_{qr}Y_p - h_{pr} Y_q),
\end{align*}

\noindent
where $H^f(X_i,X_j)$ is the Hessian of $f$ defined by 
$$
H^f(X_i,X_j) = X_iX_jf - (\nabla^B_{X_i} X_j) f,
$$
and $|| {\rm grad}\: f ||^2= \sum_{i,j} g^{ij} f_if_j$ is the square of the norm of 
${\rm grad}\: f$.
Note that $H^f$ is a symmetric $2$-tensor field.
From the above formula, we immediately obtain the following formulas 
\begin{align*}
& R_{ijkl} = R^B_{ijkl}, \\
& R_{ijkp} = R_{ijpq} = R_{ipqr} = 0, \\
& R_{ipjq} = -f H^f_{ij} h_{pq}, \\
& R_{pqrs} = f^2 \{ R^F_{pqrs} - || {\rm grad}\: f ||^2 
(h_{pr}h_{qs} - h_{ps}h_{qr}) \}, 
\end{align*}
\noindent
where we put $H^f_{ij} = H^f(X_i,X_j)$.
In these equalities, for example, $R_{ipjq}$ implies $R(X_i,Y_p,X_j,Y_q)$ etc.
We remark that the equality 
$$
H^f(X_i,X_j) = g(\nabla^B_{X_i} {\rm grad} \: f , X_j) 
= g(X_i,  \nabla^B_{X_j} {\rm grad} \: f)
$$
holds.

We must further calculate the covariant derivative of $R$ in order to apply our main 
theorem.
The formula of $S = \nabla R$ for warped products  may be also well known.
But we cannot find it in literatures, and so we here calculate it directly from the definition 
$$
S_{ijklm} = \frac{\partial R_{ijkl}}{\partial x_m} - \sum_a \Gamma_{mi}^aR_{ajkl}
 - \sum_a \Gamma_{mj}^aR_{iakl}  - \sum_a \Gamma_{mk}^aR_{ijal} 
 - \sum_a \Gamma_{ml}^aR_{ijka}
$$
etc.
We exhibit our result for a later reference.
We denote by $S^B_{ijklm}$, $S^F_{pqrst}$ the components of the covariant derivative of 
the curvature of $B$ and $F$, respectively.
Then we have 

\begin{align*}
& S_{ijklm} = S^B_{ijklm}, \\
& S_{ijklp} = S_{ijkpl} = S_{ijpqk} = 
S_{ijpqr} = S_{ipjqr} = S_{ipqrj} = 0, \\
& S_{ijkpq} = ( f_j H^f_{ik}-f_iH^f_{jk}+f \sum_{l,m} f_l\,g^{lm}R_{ijkm} )h_{pq}, \\
& S_{ipjqk} = \{ f_k H^f_{ij} - f (\nabla^B_{X_k} H^f)(X_i,X_j) \} h_{pq}, \\
& S_{ipqrs} = -f f_i R^F_{spqr} + f ( f_i || {\rm grad}\: f ||^2 
- f \sum_{j,k} H^f_{ij}\,g^{jk}f_k ) (h_{pr}h_{qs}-h_{pq}h_{rs}), \\
& S_{pqrsi} = 2S_{iqrsp}, \\
& S_{pqrst} = f^2 S^F_{pqrst}, 
\end{align*}

\noindent
where $(\nabla^B_{X_k} H^f)(X_i,X_j)$ is the covariant derivative of the Hessian given by 
$$
(\nabla^B_{X_k} H^f)(X_i,X_j) = X_k(H^f(X_i,X_j))-H^f(\nabla^B_{X_k} X_i,X_j) 
-H^f(X_i,\nabla^B_{X_k} X_j).
$$
We remark that the following equalities hold:
\begin{align*}
& \frac{1}{\,2\,} \frac{\partial \;\;}{\partial x_i} || {\rm grad}\: f ||^2 = H^f(X_i,{\rm grad}\: f) 
= \sum_{j,k} H^f_{ij} g^{jk}f_k, \\
& \sum_{l,m} f_l\,g^{lm}R^B_{ijkm} = -(R^B(X_i,X_j)X_k)f.
\end{align*}
\noindent
To prove these equalities, we frequently use the property $\nabla^B g = \nabla^F h =0$.
To confirm the second Bianchi identity $S_{ipjkq}+S_{ipkqj}+S_{ipqjk}=0$, we use 
the Ricci formula 
$$
(\nabla^B_{X_j} H^f)(X_k,X_i) - (\nabla^B_{X_k} H^f)(X_j,X_i)
= -(R^B(X_j,X_k)X_i)f.
$$

\medskip

\subsection{First application}
As a first application of Theorem~\ref{MainTh}, we consider the case ${\rm dim} 
\: B = 1$ and 
${\rm dim}\: F = 2$.
We slightly change the letters as follows.
$$
ds^2 = dx_1{}^2 + f(x_1)^2 (E dx_2{}^2+2Fdx_2dx_3+Gdx_3{}^2).
$$
Here, $E,F,G$ are functions of $x_2, x_3$, which define a Riemannian metric of the 
fiber $F$.
(Note that we can normalize the metric of $B$ as $dx_1{}^2$ by changing the variable suitably.
Also, do not confuse the fiber $F$ with the function $F(x_2,x_3)$.)
Now, we show the following theorem.

\begin{thm}\label{Appl1}
Under the above notations, if $M^3$ can be isometrically embedded into 
$\mathbb{R}^4$ and $f^{\prime \prime} \neq 0$, then the fiber $F$ is a 
space of positive constant curvature $K$, satisfying the inequality $K \geq f^{\prime}{}^2$, 
where $K$ is the Gaussian curvature of $F$.

Conversely, if $F$ is a space of positive constant curvature $K$, satisfying the inequality 
$K > f^{\prime}{}^2$, then $M^3$ can be locally isometrically embedded into $\mathbb{R}^4$, 
including the points where $f^{\prime \prime}=0$.
\end{thm}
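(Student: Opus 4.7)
The plan is to apply Theorem~\ref{MainTh} together with an explicit curvature computation. Here $\dim B = 1$ and $\dim F = 2$, so $H^f_{11} = f''$ and $\|\mathrm{grad}\,f\|^2 = f'^2$. Plugging into the warped-product formulas recorded in \S5.1 gives
\begin{align*}
& R_{1p1q} = -f\,f''\,h_{pq}, \quad R_{1223} = R_{1323} = 0, \\
& R_{2323} = f^2\bigl(K - f'^2\bigr)\bigl(EG - F^2\bigr),
\end{align*}
where $K = K(x_2, x_3)$ is the Gaussian curvature of the fiber. The $3 \times 3$ determinant then factors as
$$|\widetilde{R}| = f^4\,f''^2\,(K - f'^2)\,(EG - F^2)^2,$$
and this factorization is the cornerstone of the proof.

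For the forward direction, assume the embedding exists and $f'' \neq 0$. By Theorem~\ref{WTR}(1), $|\widetilde{R}| \geq 0$ everywhere, so the factorization forces $K \geq f'^2$ pointwise. Since $K$ depends only on $(x_2, x_3)$ while $f'$ depends only on $x_1$ and is non-constant (as $f'' \neq 0$), the identity $K \equiv f'^2$ is impossible, so $|\widetilde{R}| > 0$ on a nonempty open set. On that set Proposition~\ref{RA2}(2) determines $\alpha_{ij}$ uniquely up to sign; using $R_{1223} = R_{1323} = 0$ one finds $\alpha_{12} = \alpha_{13} = 0$, $\alpha_{pq}$ proportional to $h_{pq}$ for $p,q \in \{2,3\}$, and
$$\alpha_{11} = \pm \frac{|f''|}{\sqrt{K - f'^2}}.$$
Since $\alpha$ coincides with the second fundamental form of the embedding, it satisfies the Codazzi equation. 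Applying $(\nabla_{X_2}\alpha)(X_1, X_1) = (\nabla_{X_1}\alpha)(X_1, X_2)$, and noting $\nabla_{X_1}X_1 = 0$ together with $\nabla_{X_2}X_1 = f^{-1}f'\,X_2$, the right-hand side vanishes (because $\alpha_{12} = 0$) and the equation reduces to $\partial\alpha_{11}/\partial x_2 = 0$. Since $f'' \neq 0$, this forces $\partial K/\partial x_2 = 0$; symmetrically $\partial K/\partial x_3 = 0$. Hence $K$ is a positive constant satisfying $K \geq f'^2$.

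For the converse direction, suppose $F$ has constant Gaussian curvature $K > f'^2$. I would construct the embedding explicitly as a hypersurface of revolution. Embed $F$ isometrically into $\mathbb{R}^3$ as an open subset of the round sphere of radius $1/\sqrt{K}$ via a map $\psi : F \to \mathbb{R}^3$, which is possible since $F$ has constant positive curvature $K$. Identifying $\mathbb{R}^4 = \mathbb{R} \oplus \mathbb{R}^3$, set
$$\Phi(x_1, x_2, x_3) = \bigl(t(x_1),\,f(x_1)\,\psi(x_2, x_3)\bigr), \qquad t(x_1) = \int \sqrt{1 - f'(x_1)^2/K}\,dx_1.$$
The integrand is real because $K > f'^2$. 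A direct computation, using $\|\psi\|^2 = 1/K$ and $\langle \psi, d\psi\rangle = 0$, verifies $\Phi^*\langle\,,\,\rangle_{\mathbb{R}^4} = ds^2$. This gives a local isometric embedding everywhere in the chart, in particular at points where $f'' = 0$.

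The main obstacle is the forward direction: turning the hypothesis of isometric embeddability into constancy of $K$. The cleanest path, rather than grinding through Rivertz's six equalities directly, is to first pin down $\alpha_{ij}$ via the inverse formula (Proposition~\ref{RA2}(2))---which makes the $(x_2,x_3)$-dependence of $\alpha_{11}$ enter only through $K$---and then read off $\partial K/\partial x_p = 0$ from a single Codazzi relation. By Theorem~\ref{MainTh} this is of course equivalent to the vanishing of the $r_i$'s, but that equivalence is precisely what lets us bypass the lengthier direct verification.
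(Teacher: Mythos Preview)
Your argument is correct, and the converse construction matches the paper's exactly. The forward direction, however, takes a genuinely different route from the paper.

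The paper proceeds by first computing all the components $S_{ijklm}$ for this warped product, then evaluating Rivertz's polynomials directly; it finds $r_1=r_2=r_4=r_6=0$ automatically while $r_3=-f^4f''^2\,S^F_{23232}\,\Delta$ and $r_5=-f^4f''^2\,S^F_{23233}\,\Delta$, so that $r_3=r_5=0$ forces $\nabla^F R^F=0$ and hence $F$ is a space form. Your route instead avoids $S_{ijklm}$ and the $r_i$ entirely: once the inverse formula pins down $\alpha_{11}=\pm|f''|/\sqrt{K-f'^2}$ and $\alpha_{12}=\alpha_{13}=0$, a single Codazzi relation yields $\partial K/\partial x_p=0$ directly. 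This is more elementary and shorter; the paper's approach, on the other hand, illustrates Theorem~\ref{MainTh} as a black box (which is the point of \S 5) and makes transparent exactly which $r_i$ carry the obstruction.

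Two small points to tighten. First, your passage from ``$\partial\alpha_{11}/\partial x_2=0$ on the open set where $|\widetilde{R}|>0$'' to ``$K$ constant on the whole fiber'' deserves one sentence: since $K(x_2,x_3)\geq f'(x_1)^2$ for every $x_1$ and $f'$ is strictly monotone, for \emph{each} fiber point one can choose a base point with strict inequality, so the Codazzi conclusion indeed holds at every $(x_2,x_3)$. Second, you assert $K>0$ without justification; the paper notes that $K=0$ together with $K\geq f'^2$ forces $f'\equiv 0$, contradicting $f''\neq 0$.
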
 

\begin{proof}
Applying the formula stated above, we have 
\begin{align}
& R_{1212} = -ff^{\prime \prime} E, \quad 
R_{1213} = -ff^{\prime \prime} F, \quad
R_{1313} = -ff^{\prime \prime} G, \notag \\
& R_{1223} = R_{1323} = 0, \notag \\ %\label{appleq2} \\
& R_{2323} = f^2(K-f^{\prime}{}^2) \Delta, \notag
\end{align}
where $\Delta = EG-F^2>0$.
The Gaussian curvature $K$ of $F$ is given by $K = R^F_{2323}/\Delta$.

The covariant derivative $S = \nabla R$ is given by 
\begin{align*}
& S_{12121} = (f^{\prime} f^{\prime \prime} - ff^{\prime \prime \prime})E, \:\:
S_{12131} = (f^{\prime} f^{\prime \prime} - ff^{\prime \prime \prime})F, \:\:
S_{13131} = (f^{\prime} f^{\prime \prime} - ff^{\prime \prime \prime})G, \\
& S_{12122} = S_{12123} = S_{12132} = S_{12133} = S_{12231} = 0, \\
& S_{12232} = S_{13132} = S_{13133} = S_{13231} = S_{13233} = 0, \\
& S_{13232} = ff^{\prime}(f^{\prime}{}^2-ff^{\prime \prime} -K)\Delta, \quad 
S_{12233} = - S_{13232}, \quad S_{23231} = 2 S_{13232}, \\
& S_{23232} = f^2S^F_{23232}, \quad S_{23233} = f^2 S^F_{23233}.
\end{align*}
Then we have 
$$
| \widetilde{R} | = 
\begin{vmatrix}
R_{1212} & R_{1213} & 0 \\
R_{1213} & R_{1313} & 0 \\
0 & 0 & R_{2323} 
\end{vmatrix} 
= f^4f^{\prime \prime}{}^2(K-f^{\prime}{}^2) \Delta^2.
$$
Further, concerning Rivertz's six polynomials, we have 
\begin{align*}
& r_3 = -f^4f^{\prime \prime}{}^2\,S^F_{23232} \,\Delta, \\
& r_5 = -f^4f^{\prime \prime}{}^2\,S^F_{23233} \,\Delta, 
\end{align*}
and the remaining four polynomials are identically zero.
We can easily examine these facts, since many components of $R$ and $S$ are zero.

Since $f>0$, $f^{\prime \prime} \neq 0$ and $\Delta>0$, we have 
$S^F_{23232} = S^F_{23233} =0$ from $r_3 = r_5 = 0$, which implies that $F$ is locally 
symmetric. 
Since $F$ is $2$-dimensional, it implies that $F$ is a space of constant curvature.
In addition, from the condition $|\widetilde{R}| \geq 0$, we have $K \geq f^{\prime}{}^2 \geq 0$.
If $K=0$, then we have $f^{\prime} \equiv 0$, which contradicts the assumption 
$f^{\prime \prime} \neq 0$.
Hence $K$ is positive.

Conversely, in case $F$ is a space of positive constant curvature $K$, satisfying 
the inequality $K>f^{\prime}{}^2$, we can explicitly construct a local isometric embedding 
as follows, including the case $f^{\prime \prime}=0$.

We express $K = 1/r^2$, where $r$ is a positive constant.
Then $F$ can be considered as an open subset of the $2$-dimensional sphere with radius 
$r$.
Let 
$$
(x_2,x_3) \mapsto (a(x_2,x_3),b(x_2,x_3),c(x_2,x_3))
$$
be a local isometric embedding of $F$ into $\mathbb{R}^3$ with $a^2+b^2+c^2=r^2$.
By using a function $p(x_1)$, we construct a map $\varphi : M^3 \rightarrow \mathbb{R}^4$ 
by
$$
\varphi(x_1,x_2,x_3) = (p(x_1),f(x_1)a(x_2,x_3),f(x_1)b(x_2,x_3),f(x_1)c(x_2,x_3)).
$$
Then, we can easily see that $\varphi$ gives an isometric embedding if and only if the 
condition 
\begin{align}
p^{\prime}(x_1)^2+r^2f^{\prime}(x_1)^2=1 \label{appleq1}
\end{align}
is satisfied.
Note that to check this result, we use the equalities 
$$
a\frac{\partial a\;}{\partial x_2}+b\frac{\partial b\;}{\partial x_2}+
c\frac{\partial c\;}{\partial x_2}=
a\frac{\partial a\;}{\partial x_3}+b\frac{\partial b\;}{\partial x_3}+
c\frac{\partial c\;}{\partial x_3}=0, 
$$
which are obtained from $a^2+b^2+c^2=r^2$.
Then, if the inequality $r^2f^{\prime}{}^2<1$ holds, we can easily find a function $p(x_1)$ 
satisfying the equality (\ref{appleq1}) in an open interval.
Clearly the inequality $r^2f^{\prime}{}^2<1$ is equivalent to $K>f^{\prime}{}^2$.
\end{proof}

\begin{rem}\label{Appl2}
(1) 
We remark that in case $f^{\prime \prime} = 0$, we cannot show the converse part 
of this theorem by only applying our main theorem, because $|\widetilde{R}|=0$ in this case.
In other words, there is a case where a local isometric embedding exists even if  
$|\widetilde{R}|=0$ and non-flat.
In case $|\widetilde{R}|=0$, it is in general hard to determine the existence or non-existence 
of local isometric embeddings of $M^3$ into $\mathbb{R}^4$.

(2) 
Consider the case where the fiber $F$ is flat.
Then $M^3$ can be locally isometrically embedded into $\mathbb{R}^4$ 
if and only if 
$f(x_1)=px_1+q$ with $q>0$.
In fact, if the local isometric embedding exists, then from the conditions $K=0$ and 
$|\widetilde{R}|\geq 0$ we have $f^{\prime}{}^2f^{\prime \prime}{}^2 \leq 0$, which 
is equivalent to $f^{\prime}$=constant.
Hence $f(x_1)$ takes the above form.
(Rivertz's six equalities are automatically satisfied since $f^{\prime \prime} = 0$.)

Conversely, assume $f(x_1)=px_1+q$.
Since the fiber $F$ is flat, we may assume that the metric of $M^3$ is expressed as 
$$
ds^2=dx_1{}^2+f(x_1)^2(dx_2{}^2+dx_3{}^2).
$$
If $p=0$, then the space $M^3$ is flat, and it can be locally isometrically embedded into 
$\mathbb{R}^3$.
In case $p \neq 0$, we choose non-zero constants $k$, $l$ satisfying $p^2(k^2+l^2)=1$.
Then the map $\varphi : M^3 \rightarrow \mathbb{R}^4$ defined by 
$$
\varphi(x_1,x_2,x_3) = \left( kf(x_1) \cos \frac{x_2}{k}, kf(x_1) \sin \frac{x_2}{k}, 
lf(x_1) \cos \frac{x_3}{l}, lf(x_1) \sin \frac{x_3}{l} \right)
$$
gives a local isometric immersion of $M^3$ into $\mathbb{R}^4$.
Remark that if the domain of $(x_2,x_3)$ is sufficiently large, the map 
$\varphi$ is not embedding since $\cos$ and $\sin$ are periodic.
\end{rem}

\bigskip

\subsection{Monge-Amp\`{e}re equation}
Next, as a second application of Theorem~\ref{MainTh}, we consider the case 
${\rm dim} \: B = 2$ and ${\rm dim}\: F = 1$.
In this case, the metric of $M^3$ is expressed as 
$$
ds^2 = E dx_1{}^2+2Fdx_1dx_2+Gdx_2{}^2+ f(x_1,x_2)^2 dx_3{}^2,
$$
where, $E,F,G$ are functions of $x_1, x_2$.
Note that in case $f$ is constant, $M^3$ can be considered as a product of 
a $2$-dimensional Riemannian manifold and a $1$-dimensional flat space, and it can be 
always locally realized in $\mathbb{R}^4$, 
provided that $E$, $F$, $G$ are real analytic functions.
It is a consequence of classical Janet-Cartan's local isometric embedding theorem for the 
$2$-dimensional case.

Now we denote by $K$ the Gaussian curvature of $E dx_1{}^2+2Fdx_1dx_2+Gdx_2{}^2$, and 
$H^f_{ij}$ is the component of the Hessian of $f$.
Then, we have the following theorem.

\begin{thm}\label{Appl3}
Under the above notations, assume that $K \neq 0$ everywhere.
If $M^3$ can be isometrically embedded into $\mathbb{R}^4$, then the following 
two conditions hold.

\smallskip

$({\rm i})$ $(H^f_{11}H^f_{22}-H^f_{12}{}^2)K \geq 0$.

\smallskip

$({\rm ii})$ $H^f_{11}H^f_{22}-H^f_{12}{}^2+K(f_1{}^2G-2f_1f_2F+f_2{}^2E) = cK \Delta$ 

\noindent
for some non-negative constant $c$, and 
$f_i = \frac{\partial f}{\partial x_i}$ $(i=1,2)$.

\smallskip

Conversely, if the inequality $(H^f_{11}H^f_{22}-H^f_{12}{}^2)K > 0$ holds and the 
equality $({\rm ii})$ holds for some constant $c$, then $c>0$ and $M^3$ can 
be locally isometrically embedded into $\mathbb{R}^4$.
\end{thm}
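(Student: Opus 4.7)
The overall strategy is to use the general warped product curvature formulas from Subsection~5.1 to make all quantities explicit, then analyze both directions directly. Setting $\dim B=2$, $\dim F=1$, the only non-trivial components of the Riemann tensor $R$ of $M^3$ are $R_{1212}=K\Delta$, $R_{1313}=-fH^f_{11}$, $R_{1323}=-fH^f_{12}$, $R_{2323}=-fH^f_{22}$, while $R_{1213}=R_{1223}=0$. For the forward direction I would first compute
$$|\widetilde{R}|=\begin{vmatrix} K\Delta & 0 & 0 \\ 0 & -fH^f_{11} & -fH^f_{12} \\ 0 & -fH^f_{12} & -fH^f_{22}\end{vmatrix}=K\Delta f^2\det H^f,$$
so Theorem~\ref{WTR}(1) gives exactly condition (i). To obtain (ii), I would apply the inverse formula of Proposition~\ref{RA2}(2) to recover the second fundamental form: $\alpha_{ij}$ is proportional to $H^f_{ij}$ for $i,j\in\{1,2\}$, $\alpha_{i3}=0$, and $\alpha_{33}$ is a specific scalar in $f,K,\Delta,\det H^f$. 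I would then impose the Codazzi equation of the warped product; the mixed-index cases with a single $3$-index collapse automatically from the warped product Christoffel symbols, and the purely base-indexed Codazzi equation, combined with the Ricci identity $\tilde H^f_{ikj}-\tilde H^f_{ijk}=-(R^B(X_j,X_k)X_i)f$ recalled in Subsection~5.1 and the 2D identity $R^B_{ijkl}=K(g_{ik}g_{jl}-g_{il}g_{jk})$, reduces after cancellations to $\partial_{x_1}\Phi=\partial_{x_2}\Phi=0$ where $\Phi=\det H^f/(K\Delta)+\|\nabla f\|^2$; this says $\Phi\equiv c$ for some constant, which is precisely (ii).

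For the converse direction, I would bypass Rivertz's equations and exhibit the embedding by hand. Given the constant $c$ from (ii), set $\beta=\sqrt{c}$ and define
$$\varphi(x_1,x_2,x_3)=\bigl(u(x_1,x_2),\,v(x_1,x_2),\,(f/\beta)\cos(\beta x_3),\,(f/\beta)\sin(\beta x_3)\bigr).$$
A direct calculation gives $|\varphi_{x_3}|^2=f^2$, the cross products $\varphi_{x_i}\cdot\varphi_{x_3}$ vanish because $\sin^2+\cos^2=1$, and $\varphi_{x_i}\cdot\varphi_{x_j}=u_iu_j+v_iv_j+f_if_j/c$. Hence $\varphi$ is isometric if and only if $du^2+dv^2=\tilde g$, where $\tilde g:=g-(1/c)\,df\otimes df$. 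The positive-definiteness of $\tilde g$ follows at once from the hypotheses: from (ii) we have $\det H^f=K\Delta(c-\|\nabla f\|^2)$, and combining with $K\det H^f>0$ forces $c-\|\nabla f\|^2>0$, yielding both $c>0$ (as asserted by the theorem) and $\det\tilde g=\Delta(1-\|\nabla f\|^2/c)>0$.

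The flatness of $\tilde g$ is where (ii) enters crucially, and the key idea is to recognize $(B,\tilde g)$ as a spacelike graph. Setting $\phi=f/\sqrt{c}$, the metric $\tilde g$ is the induced metric on the graph $\Sigma=\{(x,\phi(x))\}$ inside the Lorentzian product $(B\times\mathbb{R},\,g\oplus(-dt^2))$; spacelikeness is equivalent to $1-\|\nabla\phi\|^2>0$, which we have just shown. Applying the Gauss equation for this spacelike hypersurface (in the paper's curvature sign convention) gives $\tilde R_{1212}=K\Delta-\det II$, where $II_{ij}$ is a multiple of $H^\phi_{ij}=H^f_{ij}/\sqrt c$ with normalization $\sqrt{1-\|\nabla\phi\|^2}$, so that $\det II=\det H^f/(c-\|\nabla f\|^2)$. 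Therefore $\tilde K=0$ is equivalent to $K\Delta(c-\|\nabla f\|^2)=\det H^f$, which is precisely condition (ii). Once $\tilde g$ is known to be a flat Riemannian 2-metric, local coordinates $u,v$ with $du^2+dv^2=\tilde g$ exist, and together with the fiber rotation they assemble into the required embedding $\varphi$.

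The main obstacle is the forward-direction bookkeeping. Three of Rivertz's six polynomials, namely $r_1,r_4,r_6$, vanish identically thanks to $R_{1213}=R_{1223}=0$ and the block pattern of $S=\nabla R$, but the remaining $r_2,r_3,r_5$ are lengthy expressions in $H^f$, $\tilde H^f=\nabla H^f$, and $\nabla K$ whose reduction to~(ii) requires repeated application of the Ricci identity for $H^f$, the 2D curvature identity for $R^B$, and the linear dependence among the $h_{ijkl}^0$ recorded in Remark~\ref{dependent}. Proceeding instead through the Codazzi equation for the $\alpha$ obtained from the inverse formula, as proposed, shortcuts this because many terms are seen to cancel at the level of the warped product Christoffel symbols before one expands any $r_i$; the two surviving conditions are exactly $\partial_{x_i}\Phi=0$ for $i=1,2$, whose constancy statement is~(ii).
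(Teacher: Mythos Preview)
Your converse is a genuinely different and attractive argument. The paper does \emph{not} construct an embedding by hand: it computes $|\widetilde R|=f^{2}(\det H^{f})K\Delta$ and Rivertz's polynomials, finds $r_{1}=r_{4}=r_{5}=r_{6}\equiv 0$ while $r_{2},r_{3}$ are exactly $-f^{2}K^{2}\Delta^{2}\,\partial_{x_i}\Phi$ (your $\Phi$), and then invokes Theorem~\ref{MainTh}. Your Lorentzian--graph proof of the flatness of $\tilde g=g-(1/c)\,df\otimes df$ is correct: the induced metric on the spacelike graph of $\phi=f/\sqrt{c}$ in $(B\times\mathbb R,\;g-dt^{2})$ is $\tilde g$, the second fundamental form is $H^{\phi}/\sqrt{1-\|\nabla\phi\|^{2}}$, and the Gauss equation with timelike normal yields $\tilde K=0\iff K\Delta(c-\|\nabla f\|^{2})=\det H^{f}$, which is (ii). This buys you an explicit embedding and avoids the main theorem entirely; the paper's route, by contrast, illustrates Theorem~\ref{MainTh} in action.

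Your forward direction needs repair in two places. First, a bookkeeping slip: in this geometry $r_{5}$ also vanishes identically, so only $r_{2},r_{3}$ carry content. Second, and more substantively, the inverse formula of Proposition~\ref{RA2}(2) requires $|\widetilde R|>0$, which is \emph{not} assumed in the forward statement (only $K\neq 0$ is). The paper sidesteps this by using Rivertz's identities, which hold whenever an embedding exists regardless of $|\widetilde R|$. If you want to keep your Codazzi route, you can still recover $\alpha_{13}=\alpha_{23}=0$ directly from $R_{1213}=R_{1223}=0$ together with $\alpha_{11}\alpha_{22}-\alpha_{12}^{2}=K\Delta\neq 0$, and then treat the degenerate locus $\det H^{f}=0$ separately (there $\alpha_{33}=0$ forces $H^{f}\equiv 0$, whence $\|\nabla f\|^{2}$ is constant and (ii) holds trivially). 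Also, the Codazzi equations that actually yield $\partial_{x_i}\Phi=0$ are the ``double--$3$'' ones $\beta_{33i}=\beta_{3i3}$, not the purely base-indexed ones you single out: a short computation gives $\partial_{x_i}(\alpha_{33}^{2})-2f^{-1}f_{i}\alpha_{33}^{2}=-f^{2}\partial_{x_i}\|\nabla f\|^{2}$, and substituting $\alpha_{33}^{2}=f^{2}\det H^{f}/(K\Delta)$ yields exactly $\partial_{x_i}\Phi=0$. The base-indexed Codazzi equations are then redundant. With these fixes your argument goes through.
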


\begin{proof}
To apply Theorem~\ref{MainTh}, we must calculate the curvature and its covariant 
derivative.
The curvature is given by 
\begin{align*}
& R_{1212} = K \Delta, \quad
R_{1213} = R_{1223} = 0, \\
& R_{1313} = -fH^f_{11}, \quad
R_{1323} = -fH^f_{12}, \quad
R_{2323} = -fH^f_{22}.
\end{align*}
The components $H^f_{ij}$ of the Hessian are explicitly given by 
\begin{align*}
H^f_{11} & = f_{11} - \Gamma^1_{11}f_1-\Gamma_{11}^2f_2, \\
H^f_{12} & = f_{12}-\Gamma_{12}^1f_1-\Gamma_{12}^2f_2, \\
H^f_{22} & = f_{22}-\Gamma_{22}^1f_1-\Gamma_{22}^2f_2, 
\end{align*}
where $f_{11} = \frac{\partial^2 f}{\partial x_1{}^2}$ etc, and $\nabla^B_{X_i} X_j = 
\sum_k \Gamma_{ij}^k X_k$.
The components $\Gamma_{ij}^k$ are given by 
\begin{align*}
& \Gamma^1_{11} = \frac{1}{2\Delta} (GE_1-2FF_1+FE_2), \quad
\Gamma^2_{11} = -\frac{1}{2\Delta} (FE_1-2EF_1+EE_2), \\
& \Gamma^1_{12} = -\frac{1}{2\Delta} (FG_1-GE_2), \quad
\Gamma^2_{12} = \frac{1}{2\Delta} (EG_1-FE_2), \\
& \Gamma^1_{22} = -\frac{1}{2\Delta} (GG_1-2GF_2+FG_2), \quad
\Gamma^2_{22} = \frac{1}{2\Delta} (FG_1-2FF_2+EG_2), 
\end{align*}
where $E_1 = \frac{\partial E}{\partial x_1}$ etc. 

The covariant derivative of the curvature is given by 
\begin{align*}
& S_{12121} = S^B_{12121}, \qquad 
S_{12122} = S^B_{12122}, \\
& S_{12123} = S_{12131} = S_{12132} = S_{12231} = 0, \\
& S_{12232} = S_{13133} = S_{13233} = S_{23233} = 0, \\
& S_{12133} = f_2H^f_{11}-f_1H^f_{12}-f(f_1F-f_2E)K, \\
& S_{12233} = f_2H^f_{12}-f_1H^f_{22}-f(f_1G-f_2F)K, \\
& S_{13131} = f_1H^f_{11}-fH^f_{111}, \qquad 
S_{13132} = f_2H^f_{11}-fH^f_{112}, \\
& S_{13231} = f_1H^f_{12}-fH^f_{121}, \qquad 
S_{13232} = f_2H^f_{12}-fH^f_{122}, \\
& S_{23231} = f_1H^f_{22}-fH^f_{221}, \qquad 
S_{23232} = f_2H^f_{22}-fH^f_{222},
\end{align*}
where $S^B_{12121}$, $S^B_{12122}$ are the components of the covariant derivative 
of the curvature of $B$, and $H^f_{ijk}$ is the component of the covariant derivative 
of $H^f$ given by 
$$
H^f_{ijk} = (\nabla^B_{X_k} H^f)(X_i,X_j).
$$
Note that the equality $H^f_{ijk} = H^f_{jik}$ holds, and we have 
$$
S^B_{12121} = K_1 \Delta, \qquad S^B_{12122} = K_2 \Delta, 
$$
where $K_i = \frac{\partial K}{\partial x_i}$ ($i=1,2$). 
To check these equalities on $S^B$, we use the properties 
$$
\Gamma^1_{11} + \Gamma^2_{12} = \frac{\Delta_1}{2\Delta}, \qquad
\Gamma^1_{12} + \Gamma^2_{22} = \frac{\Delta_2}{2\Delta},
$$
where $\Delta_i = \frac{\partial \Delta}{\partial x_i}$.

Concerning the determinant $|\widetilde{R}|$, we have 
$$
|\widetilde{R}| = \begin{vmatrix}
R_{1212} & 0 & 0 \\
0 & R_{1313} & R_{1323} \\
0 & R_{1323} & R_{2323} 
\end{vmatrix}
= f^2(H_{11}^f H^f_{22}-H^f_{12}{}^2)K \Delta.
$$
Rivertz's six polynomials are given by $r_1=r_4=r_5=r_6=0$ and
\begin{align*}
r_2 = & f^2 \{ (H_{11}^f H^f_{22}-H^f_{12}{}^2)K_1
-(H_{22}^fH^f_{111}-2H^f_{12}H^f_{112}+H^f_{11}H^f_{122})K \\
&\hspace{0.5cm}  -(f_1G-f_2F)H^f_{11}K^2 \} \Delta,\\
r_3 = & f^2 \{ (H_{11}^f H^f_{22}-H^f_{12}{}^2)K_2
-(H_{22}^fH^f_{121}-2H^f_{12}H^f_{221}+H^f_{11}H^f_{222})K \\
&\hspace{0.5cm}  +(f_1F-f_2E)H^f_{22}K^2 \} \Delta.
\end{align*}
We can easily check these facts, since many components of $R$ and $S$ vanish.

Now, we show that two conditions $r_2=r_3=0$ are equivalent to 
\begin{align*}
H^f_{11}H^f_{22}-H^f_{12}{}^2+K(f_1{}^2G-2f_1f_2F+f_2{}^2E) = cK \Delta 
\end{align*}
for some constant $c$.
For this purpose, we have only to prove the following two equalities 
\begin{align}
& \frac{\partial \;\;}{\partial x_1} 
\left( \frac{H^f_{11}H^f_{22}-H^f_{12}{}^2+K(f_1{}^2G-2f_1f_2F+f_2{}^2E)}{K \Delta}
\right)
= -\frac{r_2}{f^2K^2 \Delta^2} , \label{appleq2_1} \\
& \frac{\partial \;\;}{\partial x_2} 
\left( \frac{H^f_{11}H^f_{22}-H^f_{12}{}^2+K(f_1{}^2G-2f_1f_2F+f_2{}^2E)}{K \Delta}
\right) 
= -\frac{r_3}{f^2K^2 \Delta^2}. \label{appleq2_2}
\end{align}

We first calculate $(H^f_{11}H^f_{22}-H^f_{12}{}^2)_1$, where the subscript $1$ 
means the partial derivative by $x_1$.
From the definition of $H^f_{ijk}$ we have 
\begin{align*}
& H^f_{111} = \frac{\partial H^f_{11}}{\partial x_1} -2\Gamma^1_{11}H^f_{11}
-2\Gamma^2_{11}H^f_{12}, \\
& H^f_{121} = \frac{\partial H^f_{12}}{\partial x_1} -\Gamma^1_{11}H^f_{12}
-\Gamma^2_{11}H^f_{22}-\Gamma^1_{12}H^f_{11}
-\Gamma^2_{12}H^f_{12}, \\
& H^f_{221} = \frac{\partial H^f_{22}}{\partial x_1} -2\Gamma^1_{12}H^f_{12}
-2\Gamma^2_{12}H^f_{22}.
\end{align*}
Next, by using the Ricci formula 
$$
H^f_{kij} - H^f_{jik} = -(R^B(X_j,X_k)X_i)f = \sum_{l,m} g^{lm}R^B_{jkil}f_m,
$$
we have 
\begin{align*}
H^f_{112} & = H^f_{211}+\sum_{l,m} g^{lm}R^B_{211l}f_m \\
& = H^f_{121}-\sum_{l,m} g^{lm}R^B_{121l}f_m \\
& = H^f_{121}-\sum_{m} g^{2m}R^B_{1212}f_m \\
& = H^f_{121}-(g^{21}f_1 +g^{22}f_2)R^B_{1212} \\
& = H^f_{121}+\frac{1}{\Delta}(f_1F-f_2E)K \Delta \\
& = \frac{\partial H^f_{12}}{\partial x_1} -\Gamma^1_{11}H^f_{12}
-\Gamma^2_{11}H^f_{22}-\Gamma^1_{12}H^f_{11}-\Gamma^2_{12}H^f_{12}+(f_1F-f_2E)K.
\end{align*}
In a similar way, we have 
$$
H^f_{122} = \frac{\partial H^f_{22}}{\partial x_1} -2\Gamma^1_{12}H^f_{12}
-2\Gamma^2_{12}H^f_{22}+(f_1G-f_2F)K.
$$
Thus we have 
\begin{align*}
& (H^f_{11}H^f_{22}-H^f_{12}{}^2)_1 = \frac{\partial H^f_{11}}{\partial x_1}H^f_{22}
+ H^f_{11}\frac{\partial H^f_{22}}{\partial x_1}
-2H^f_{12}\frac{\partial H^f_{12}}{\partial x_1} \\
= & (H^f_{111}+2\Gamma^1_{11}H^f_{11}+2\Gamma^2_{11}H^f_{12})H^f_{22} \\
& \hspace{0.5cm} +H^f_{11} \{ H^f_{122} +2\Gamma^1_{12}H^f_{12}
+2\Gamma^2_{12}H^f_{22}-(f_1G-f_2F)K \} \\
& \hspace{0.5cm} -2H^f_{12} \{ H^f_{112} +\Gamma^1_{11}H^f_{12}
+\Gamma^2_{11}H^f_{22}+\Gamma^1_{12}H^f_{11}+\Gamma^2_{12}H^f_{12}
-(f_1F-f_2E)K \} \\
= & (H^f_{22}H^f_{111}-2H^f_{12}H^f_{112}+H^f_{11}H^f_{122})
+2(\Gamma^1_{11}+\Gamma^2_{12})(H^f_{11}H^f_{22}-H^f_{12}{}^2) \\
& \hspace{0.5cm} -(f_1G-f_2F)H^f_{11}K+2(f_1F-f_2E)H^f_{12}K \\
= & (H^f_{22}H^f_{111}-2H^f_{12}H^f_{112}+H^f_{11}H^f_{122})
+\frac{\Delta_1}{\Delta}(H^f_{11}H^f_{22}-H^f_{12}{}^2) \\
& \hspace{0.5cm} -(f_1G-f_2F)H^f_{11}K+2(f_1F-f_2E)H^f_{12}K.
\end{align*}
Therefore, the numerator of 
$$
\frac{\partial \;\;}{\partial x_1} 
\left( \frac{H^f_{11}H^f_{22}-H^f_{12}{}^2}{K \Delta} \right)
$$
is equal to 
\begin{align}
& (H^f_{11}H^f_{22}-H^f_{12}{}^2)_1 K \Delta - (H^f_{11}H^f_{22}-H^f_{12}{}^2)K_1 \Delta 
-(H^f_{11}H^f_{22}-H^f_{12}{}^2) K \Delta_1 \label{appleq3} \\
= & (H^f_{22}H^f_{111}-2H^f_{12}H^f_{112}+H^f_{11}H^f_{122}) K \Delta 
- (H^f_{11}H^f_{22}-H^f_{12}{}^2)K_1 \Delta \notag \\
& \hspace{0.5cm}  -\{ (f_1G-f_2F)H^f_{11}-2(f_1F-f_2E)H^f_{12} \} K^2 \Delta. \notag
\end{align}
Next, we calculate the numerator of 
$$
\frac{\partial \;\;}{\partial x_1} \left(
\frac{f_1{}^2G-2f_1f_2F+f_2{}^2E}{\Delta} \right).
$$
It is equal to 
\begin{align*}
& (f_1{}^2G-2f_1f_2F+f_2{}^2E)_1 \Delta - (f_1{}^2G-2f_1f_2F+f_2{}^2E) \Delta_1 \\
= & \{ 2(f_1G-f_2F)f_{11}-2(f_1F-f_2E)f_{12}+f_1{}^2G_1-2f_1f_2F_1+f_2{}^2E_1 \} \Delta \\
& \hspace{0.5cm}  - (f_1{}^2G-2f_1f_2F+f_2{}^2E) \Delta_1.
\end{align*}
Here, we substitute 
\begin{align*}
& f_{11} = H^f_{11}+\Gamma^1_{11}f_1+\Gamma^2_{11}f_2, \\
& f_{12} = H^f_{12}+\Gamma^1_{12}f_1+\Gamma^2_{12}f_2
\end{align*}
into the above.
Then, it finally becomes 
\begin{align}
2 (f_1G-f_2F)H^f_{11} \Delta  - 2(f_1F-f_2E)H^f_{12} \Delta. \label{appleq4}
\end{align}
In fact, it can be easily seen that the terms concerning $f_1{}^2$, $f_1f_2$, $f_2{}^2$ all vanish.
For example, the coefficient of $f_1{}^2$ is equal to 
\begin{align*}
& 2(\Gamma^1_{11}G-\Gamma^1_{12}F) \Delta +G_1\Delta-G\Delta_1 \\
= & G(GE_1-2FF_1+FE_2)+F(FG_1-GE_2)
-(F^2G_1-2FGF_1+G^2E_1) \\
= & 0.
\end{align*}
Adding the above two equalities (\ref{appleq3}), (\ref{appleq4}), we 
obtain (\ref{appleq2_1}).
We can prove (\ref{appleq2_2}) in the same way.
Thus we have 
$$
H^f_{11}H^f_{22}-H^f_{12}{}^2+K(f_1{}^2G-2f_1f_2F+f_2{}^2E) = cK \Delta
$$ 
for some constant $c$.

In case the function $f(x_1,x_2)$ satisfies this equation, we have 
$$
|\widetilde{R}| = f^2K^2 \Delta \{c \Delta - (f_1{}^2G-2f_1f_2F+f_2{}^2E) \}.
$$
Then if the conditions $| \widetilde{R} | \geq 0$ and $K \neq 0$ hold, we have 
$$
c \Delta  \geq f_1{}^2G-2f_1f_2F+f_2{}^2E \geq 0,
$$
which implies that the constant $c$ is non-negative.
If $|\widetilde{R}| >0$ and $K \neq 0$, we have $c>0$.

Now Theorem~\ref{Appl3} follows immediately from Theorem~\ref{MainTh} and 
Theorem~\ref{WTR}.
\end{proof}

Note that the equality (ii) in Theorem~\ref{Appl3} is a second order partial differential 
equation on $f(x_1,x_2)$ that is called the Monge-Amp\`{e}re equation.
We remark that if we multiply the function $f$ by a positive constant $k$, then the constant $c$ 
is multiplied by $k^2$.
Hence, we can freely change the constant $c$, provided it is positive.
 
\medskip

\begin{exam}\label{Appl5}
We consider a metric defined in a neighborhood of the origin 
$$
ds^2 = Edx_1{}^2+dx_2{}^2+f(x_1,x_2)^2dx_3{}^2
$$
with $E = 1+2ax_1+2bx_2$ $(b \neq 0)$ and $f(x_1,x_2) > 0$.
The Gaussian curvature of the base metric $Edx_1{}^2+dx_2{}^2$ is given by $K = b^2/E^2
>0$.

Applying Theorem~\ref{Appl3}, we know that if this metric can be locally isometrically 
embedded into $\mathbb{R}^4$, then we have 
\begin{align}
& f_{11}f_{22}-f_{12}{}^2-\frac{a}{E}f_1f_{22}+bf_2f_{22}+\frac{2b}{E}f_1f_{12}
+\frac{b^2}{E}f_2{}^2 = \frac{b^2c}{E},  \label{appleq9_1} \\
& \frac{1}{E}f_1{}^2 + f_2{}^2 \leq c \label{appleq9_2}
\end{align}
for some non-negative constant $c$.
Conversely, if the function $f(x_1,x_2)$ satisfies the Monge-Amp\`{e}re equation 
(\ref{appleq9_1}) and the inequality $\frac{1}{E}f_1{}^2 + f_2{}^2 < c$ near the origin, then 
the above metric can be locally isometrically embedded into $\mathbb{R}^4$.
Note that the last inequality is equivalent to $f_1(0,0)^2+f_2(0,0)^2<c$.

In fact, we have 
\begin{align*}
& \Gamma_{11}^1= \frac{a}{E}, \;\;\; \Gamma_{11}^2 = -b, \;\;\; \Gamma_{12}^1 = \frac{b}{E}, 
\;\;\; \Gamma_{12}^2 = \Gamma_{22}^1 = \Gamma_{22}^2 = 0
\end{align*}
and 
\begin{align*}
H_{11}^f = f_{11} - \frac{a}{E}f_1+bf_2, \;\;\; H_{12}^f = f_{12}-\frac{b}{E}f_1, 
\;\;\; H_{22}^f = f_{22}.
\end{align*}
On the other hand, we have 
$$
K(f_1{}^2G-2f_1f_2F+f_2{}^2E) = \frac{b^2}{E^2}(f_1{}^2+f_2{}^2E).
$$
From these equalities it is easy to see that the condition (ii) in Theorem~\ref{Appl3} 
is equivalent to the above (\ref{appleq9_1}).

Concerning the inequality (\ref{appleq9_2}), from the Monge-Amp\`{e}re equation, we have 
\begin{align*}
& K(H_{11}^fH_{22}^f-H_{12}^f{}^2) \\
= & cK^2\Delta - K^2 (f_1{}^2G-2f_1f_2F+f_2{}^2E) \\
= & \frac{b^4c}{E^3}-\frac{b^4}{E^4}(f_1{}^2+f_2{}^2E) \\
= & \frac{b^4}{E^3} \left( c-\frac{1}{E}f_1{}^2-f_2{}^2 \right).
\end{align*}
Then the inequality (\ref{appleq9_2}) follows immediately from the condition (i) 
in Theorem~\ref{Appl3} since $b \neq 0$ and $E>0$.
The converse part can be examined in a similar way.

To solve the Monge-Amp\`{e}re equation (\ref{appleq9_1}) satisfying $f_1(0,0)^2
+f_2(0,0)^2<c$ is another hard problem.
But it actually possesses a solution such as 
$$
\displaystyle{f(x_1,x_2) = \frac{1}{\,2\,} \int_{-\frac{d}{c}}^{2ax_1+2bx_2} 
\sqrt{\frac{ct+d}{a^2+b^2(t+1)}}\, dt},
$$
where $d$ is a constant satisfying $0<d < c$.
It can be easily verified that the function $f(x_1,x_2)$ satisfies the above two conditions.

The function 
$$
f(x_1,x_2) = \varphi(x_1)+kx_2
$$
also gives a solution of (\ref{appleq9_1}) for any function $\varphi(x_1)$ if $k^2=c$.
But concerning the inequality, this function satisfies $f_1(0,0)^2+f_2(0,0)^2 
= \varphi^{\prime}(0)^2+c \geq c$, and does not satisfy the desired inequality 
$f_1(0,0)^2+f_2(0,0)^2<c$ at the origin.

As another example, in case $\psi^{\prime}(x_1) \neq 0$, the function 
$$
f(x_1,x_2) = \varphi(x_1) + \psi(x_1)x_2
$$
gives a solution of (\ref{appleq9_1}) if and only if 
$$
\varphi^{\prime}(x_1) = \displaystyle{\frac{1}{2b \,\psi^{\prime}(x_1)} 
\left\{ (1+2ax_1)\psi^{\prime}(x_1)^2-b^2(\psi(x_1)^2-c) \right\}}.
$$
In this case we see that the inequality $f_1(0,0)^2+f_2(0,0)^2 <c$ is equivalent to 
$$
\displaystyle{\left\{ \psi^{\prime}(0)^2+b^2(\psi(0)^2-c) \right\}^2 < 0},
$$
which is also not the case for any choice of the function $\psi(x_1)$.
\end{exam}

\medskip

\begin{exam}\label{Appl6}
As a next application of Theorem~\ref{Appl3}, we consider the following metric 
$$
ds^2 = E(x_1)dx_1{}^2+2F(x_1)dx_1dx_2+G(x_1)dx_2{}^2+f(x_1)^2dx_3{}^2, 
$$
all of whose components are functions of one variable $x_1$.
We assume that this metric can be isometrically embedded into $\mathbb{R}^4$, 
under the conditions $K \neq 0$ and $G^{\prime} \neq 0$.

We can easily verify the equalities 
\begin{align*}
& H^f_{11}H^f_{22}-H^f_{12}{}^2 = \frac{GG^{\prime}}{2\Delta}f^{\prime} f^{\prime \prime} 
- \frac{G^{\prime}}{4\Delta^2} (G^2E^{\prime}-2FGF^{\prime}+F^2G^{\prime})
f^{\prime}{}^2, \\ 
& K =\frac{G^{\prime}(GE^{\prime}-2FF^{\prime}+EG^{\prime})}{4\Delta^2}
- \frac{G^{\prime \prime}}{2\Delta}
=  -\frac{1}{4G^{\prime}} \left( \frac{G^{\prime}{}^2}{\Delta} \right)^{\prime}.
\end{align*}
Then the equation (ii) in Theorem~\ref{Appl3} becomes the form 
$$
2GG^{\prime}f^{\prime}f^{\prime \prime} + (G^{\prime}{}^2-2GG^{\prime \prime})
f^{\prime}{}^2 =
c \{ G^{\prime}(GE^{\prime}-2FF^{\prime}+EG^{\prime})-2G^{\prime \prime}\Delta \}.
$$
It is easy to see that this is equivalent to 
$$
\left( \frac{Gf^{\prime}{}^2}{G^{\prime}{}^2} \right)^{\prime} 
= c \left( \frac{EG-F^2}{G^{\prime}{}^2} \right)^{\prime}.
$$
Hence we have 
$$
\frac{Gf^{\prime}{}^2}{G^{\prime}{}^2} 
= \frac{c\Delta}{G^{\prime}{}^2}-A 
$$
for some constant $A$, which implies 
$$
f^{\prime}{}^2 =\frac{c\,\Delta-A\,G^{\prime}{}^2}{G}.
$$
If $f$ satisfies this condition, we have 
$$
H^f_{11}H^f_{22}-H^f_{12}{}^2 = AG^{\prime}{}^2K
$$
after some calculations, and hence we have $K(H^f_{11}H^f_{22}-H^f_{12}{}^2) 
= AG^{\prime}{}^2K^2$, which implies that $A$ must be non-negative from the condition 
(i) in Theorem~\ref{Appl3}.

Conversely, if $A$ is positive and $f$ satisfies the equality $f^{\prime}{}^2 
=(c \Delta - AG^{\prime}{}^2)/G$, 
then we have $c >0$, and the above metric can be locally isometrically embedded into 
$\mathbb{R}^4$.
For example, in the special case where $c=1$ and $F=0$, the embedding is explicitly given by 
\begin{align*}
& \varphi(x_1,x_2,x_3) \\
& \;\;\; = \left(2\sqrt{AG(x_1)}\,\cos \frac{x_2}{2\sqrt{A}},
\,2\sqrt{AG(x_1)}\,\sin \frac{x_2}{2\sqrt{A}},\,f(x_1) \cos x_3, \,f(x_1) \sin x_3 \right).
\end{align*}
Precisely, we must restrict the domain of $(x_2,x_3)$ to make $\varphi$ 
an embedding.

In addition, if $E(x_1) \equiv 1$ and $G(x_1)=\cos^2 ax_1$ for a positive constant 
$a$, the base manifold $B$ is a space of constant positive curvature with $K=a^2$.
The warping function $f(x_1)$ must be of the form 
$$
f(x_1) = \int_b^{x_1} \sqrt{1-4Aa^2 \sin^2 ax_1}\,dx_1
$$
for some constant $b$, and only in this case, $M^3$ can be locally isometrically  
embedded into $\mathbb{R}^4$ provided $A>0$.
\end{exam}

\bigskip

\subsection{$3$-dimensional Lie groups}
In our previous paper \cite{AH}, based on the method by Kaneda \cite{K1}, we classified 
left-invariant Riemannian metrics on $3$-dimensional Lie groups that can be locally 
isometrically embedded into $\mathbb{R}^4$.
By applying the main theorem (Theorem~\ref{MainTh}), we can obtain the same results 
through a little different approach.
In the following we use the same symbols as in \cite{AH}.

Let $G$ be a $3$-dimensional Lie group with a left-invariant Riemannian metric $g$.
We denote by $\mathfrak{g}$ the Lie algebra of $G$.
Naturally, there is the inner product of $\mathfrak{g}$ induced by the metric $g$.
First of all, we consider the case where $\mathfrak{g}$ is solvable.
In this case it is already known that there exists an orthonormal basis $\{ e_1,e_2,e_3 \}$ 
of $\mathfrak{g}$ such that the bracket operations are given by 
$$
[e_1,e_2] = ae_2+2be_3, \quad [e_1,e_3] = 2ce_2+de_3, \quad [e_2,e_3] = 0  
$$
for some real numbers $a,b,c,d$ (cf. \cite{AH}, \cite{HT}, \cite{HTT}, \cite{KTT}).
In this case we have the following proposition.

\begin{prop}\label{Appl4}
Under the above notations, Rivertz's six equalities $r_1= \cdots = r_6=0$ are equivalent to 
the condition 
$$
ad=4b^2, \;\; b=c \quad {\rm or} \quad a=d, \;\; b+c=0.
$$
\end{prop}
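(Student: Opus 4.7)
The plan is to compute the curvature tensor and its covariant derivative directly from the Lie-algebraic data, substitute into Rivertz's six polynomials, and determine the common zero locus in the parameter space $(a,b,c,d)$. First I would apply Koszul's formula to the orthonormal basis $\{e_1,e_2,e_3\}$ to obtain the Levi-Civita connection. Since $[e_2,e_3]=0$, only the brackets involving $e_1$ contribute, so the connection coefficients $\nabla_{e_i}e_j = \sum_k \Gamma_{ij}^k e_k$ become explicit linear expressions in $a,b,c,d$. From these, compute the components $R_{ijkl}$ using the standard formula for the curvature of a left-invariant metric, and then $S_{ijklm}$ using only the $\Gamma$'s (no partial derivatives appear, since the basis is left-invariant).

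Next, substitute these expressions into the six polynomials $r_1,\ldots,r_6$ listed in the Introduction. To cut down the labor, I would use the shortened determinantal forms of Remark~\ref{Rem_length} in place of the raw expressions, and exploit the symmetry $(e_2,e_3)\mapsto(e_3,e_2)$ of the structural setup, which induces a permutation on the $r_i$'s and pairs up $(a,b,c,d)$ with $(d,c,b,a)$. This symmetry should reduce the effective number of polynomials to verify.

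After substitution, each $r_i$ becomes a polynomial in $a,b,c,d$ of moderate degree. The central claim is that the ideal $(r_1,\ldots,r_6)\subset\mathbb{R}[a,b,c,d]$ has radical equal to the intersection of the ideals
\begin{align*}
I_1 &= (ad - 4b^2,\; b - c), \\
I_2 &= (a - d,\; b + c).
\end{align*}
The inclusion $\sqrt{(r_1,\ldots,r_6)} \supseteq I_1 \cap I_2$ is the easier direction: simply substitute the relations of $I_1$ and then of $I_2$ into the explicit forms of $r_1,\ldots,r_6$ and verify vanishing on each component separately. The harder direction is the converse: showing that any common zero of $r_1,\ldots,r_6$ must lie in $V(I_1)\cup V(I_2)$. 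For this I would try to exhibit explicit factorizations of (combinations of) the $r_i$'s; optimistically, each $r_i$ factors as a product containing some combination of $(ad-4b^2)$, $(a-d)$, $(b-c)$, $(b+c)$, so that the ideal of common zeros can be read off.

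The main obstacle will be the sheer size of the polynomial computation: the $R_{ijkl}$ are quadratic in $a,b,c,d$, the $S_{ijklm}$ are cubic, and each $r_i$ is quintic with hundreds of monomial terms before cancellation. A hand calculation is feasible but error-prone, so I would carry it out using a symbolic algebra system and confirm the factorization by primary decomposition of the resulting ideal. If the anticipated clean factorization fails for some $r_i$, the backup plan is to invoke Proposition~\ref{Prop_Ralpha} on the open locus where $|\widetilde R|>0$, replacing the vanishing of the $r_i$ by the equivalent vanishing of $H_0(x_1,x_2,x_3)$, whose coefficients $h^0_{ijkl}$ have a more transparent structure and may factor more obviously; the boundary locus $|\widetilde R|=0$ can then be handled as a limiting case within the parameter variety.
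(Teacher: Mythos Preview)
Your plan is sound and would succeed, and at the strategic level it coincides with the paper's proof: compute $\nabla$, $R$, $S$ from the bracket data, substitute into the $r_i$, and describe the zero locus. The difference is in execution. The paper avoids any machine computation or ideal decomposition by observing that in this frame $R_{1223}=R_{1323}=0$ and most $S_{ijklm}$ vanish, which forces $r_3=r_5=0$ outright. The paper then evaluates $r_6$ first and finds the closed factorization
\[
r_6=-(b-c)\bigl\{(a-d)^2+4(b+c)^2\bigr\}\bigl\{(b+c)^2-ad\bigr\}^2,
\]
giving three branches $b=c$, $(a=d,\;b+c=0)$, or $(b+c)^2=ad$; on each branch the remaining $r_1,r_2,r_4$ simplify dramatically (via the auxiliary identity for $R_{1212}R_{1313}-R_{1213}{}^2$) and the stated conditions drop out. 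This case split is the key shortcut you are missing: note in particular that $r_6$ carries the extra factor $\{(b+c)^2-ad\}^2$, so the hoped-for factorization of each $r_i$ purely in terms of $(ad-4b^2)$, $(a-d)$, $(b-c)$, $(b+c)$ does not hold, and a naive simultaneous factorization would stall. Your CAS/primary-decomposition fallback would of course recover the result, but the paper's route is entirely by hand.
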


\begin{proof}
In \cite{AH} we already calculated the Levi-Civita connection of $G$ and its curvature.
The Levi-Civita connection is given by 
\begin{align*}
& \nabla_{e_1} e_1 = 0, \quad \nabla_{e_1} e_2 = (b-c)e_3, \quad 
\nabla_{e_1} e_3= -(b-c)e_2, \\
& \nabla_{e_2}e_1 = -ae_2-(b+c)e_3, \quad \nabla_{e_2} e_2 = ae_1, \quad 
\nabla_{e_2} e_3 = (b+c)e_1, \\
& \nabla_{e_3} e_1 = -(b+c)e_2-de_3, \quad \nabla_{e_3} e_2 = (b+c)e_1, \quad
\nabla_{e_3} e_3 = de_1, 
\end{align*}
where we consider $e_i$ ($i=1,2,3$) as left-invariant vector fields on $G$.
The curvature tensor is given by 
\begin{align*}
& R_{1212} = -a^2-(b+c)(3b-c), \\
& R_{1213} = -2(ac+bd), \\
& R_{1223} = R_{1323} = 0, \\
& R_{1313} =(b+c)(b-3c)-d^2, \\
& R_{2323} = (b+c)^2-ad.
\end{align*}
The covariant derivative $S = \nabla R$ can be calculated by the formula 
\begin{align*}
S_{ijklm} & = (\nabla_{e_m}R)(e_i,e_j,e_k,e_l) \\
& =e_m(R(e_i,e_j,e_k,e_l)) -R(\nabla_{e_m}e_i,e_j,e_k,e_l)-R(e_i,\nabla_{e_m}e_j,e_k,e_l) \\
& \hspace{0.5cm} - R(e_i,e_j,\nabla_{e_m}e_k,e_l)-R(e_i,e_j,e_k,\nabla_{e_m}e_l).
\end{align*}
Note that $e_i$ etc. are left-invariant vector fields on $G$, and hence $R(e_i,e_j,e_k,e_l)$ 
is constant on $G$, and the first term $e_m(R(e_i,e_j,e_k,e_l))$ vanishes.
By direct calculations, we have 
\begin{align*}
& S_{12121} = -2(b-c)R_{1213}, \\
& S_{12122}=S_{12123} = 0, \\
& S_{12131} = (b-c)(R_{1212}-R_{1313}), \\
& S_{12132} = S_{12133} = S_{12231} = 0, \\
& S_{12232} = (b+c)R_{1212}-aR_{1213}-(b+c)R_{2323}, \\
& S_{12233} = dR_{1212}-(b+c)R_{1213}-dR_{2323}, \\
& S_{13131} = 2(b-c)R_{1213}, \\
& S_{13132} = S_{13133} = S_{13231} = 0, \\
& S_{13232} = (b+c)R_{1213}-aR_{1313}+aR_{2323}, \\
& S_{13233} = dR_{1213}-(b+c)R_{1313}+(b+c)R_{2323}, \\
& S_{23231} = S_{23232} = S_{23233} = 0.
\end{align*}

Then, concerning Rivertz's six equalities,  we have $r_3 = r_5 =0$ at once since many 
components are zero.
Now, we consider the remaining four conditions.
We assume $r_1=r_2=r_4=r_6=0$.
At first, we have 
\begin{align*}
r_6 & = R_{1313}R_{2323}S_{12232}-R_{1213}R_{2323}S_{12233}
-R_{1213}R_{2323}S_{13232}+R_{1212}R_{2323}S_{13233} \\
& = \{ (b+c)(R_{1212}-R_{1313})-(a-d)R_{1213} \}R_{2323}{}^2 \\
& = -(b-c) \{(a-d)^2+4(b+c)^2 \} \{(b+c)^2-ad \}^2.
\end{align*}
Next, we can directly examine the equality 
\begin{align}
& R_{1212}R_{1313}-R_{1213}{}^2 + (b-c)^2(a+d)^2 \label{appleq5} \\
= & \{ ad-(b+c)^2 \} \{ad+(3b-c)(b-3c) \}. \notag
\end{align}
From the equality $r_6=0$, we have the following three possibilities.

(i) 
In case $b=c$, we have $S_{12121} = S_{12131}  = S_{13131} = 0$.
Hence we have 
\begin{align}
& r_1 = -(R_{1212}R_{1313}-R_{1213}{}^2)S_{12232}, \notag \\
& r_2 = -(R_{1212}R_{1313}-R_{1213}{}^2)(S_{12233}+S_{13232}),  \label{appleq6} \\
& r_4 = -(R_{1212}R_{1313}-R_{1213}{}^2)S_{13233}. \notag
\end{align}
In this case, from the equality (\ref{appleq5}), we have 
$$
R_{1212}R_{1313}-R_{1213}{}^2 = (ad-4b^2)^2.
$$
Hence, by calculating $S_{12232}$ etc., we have 
\begin{align*}
& r_1 = -4b(ad-4b^2)^3, \\
& r_2 = 2(a-d)(ad-4b^2)^3, \\
& r_4 = 4b(ad-4b^2)^3.
\end{align*}
Thus, we have $ad=4b^2$ or $a=d$, $b=c=0$ from the conditions $r_1=r_2=r_4=0$.

(ii) 
In case $a=d$ and $b+c=0$, we have 
\begin{align*}
& R_{1212} = R_{1313} = R_{2323} = -a^2, \\
& R_{1213} = R_{1223} = R_{1323} = 0, 
\end{align*}
and $S_{ijklm} = 0$ for all indices.
Hence the equalities $r_1=r_2=r_4=0$ automatically hold.
Note that the case $a=d$, $b=c=0$ in (i) is contained in this case.

(iii) 
In case $(b+c)^2=ad$, we have $R_{2323} = 0$.
Hence we have formally the same expressions as in (\ref{appleq6}), concerning $r_1, r_2, r_4$.
In these expressions, by replacing the product $ad$ by $(b+c)^2$, we have 
\begin{align*}
& r_1 = (b-c) \{ a^2+(b+c)^2 \}(R_{1212}R_{1313}-R_{1213}{}^2), \\
& r_2 = 2(a+d)(b^2-c^2) (R_{1212}R_{1313}-R_{1213}{}^2), \\
& r_4 = (b-c) \{(b+c)^2+d^2 \} (R_{1212}R_{1313}-R_{1213}{}^2).
\end{align*}
If $R_{1212}R_{1313}-R_{1213}{}^2 \neq 0$, we have $b=c$, or 
$a=d=b+c=0$.
But these cases are already considered in the above.
In case $R_{1212}R_{1313}-R_{1213}{}^2 = 0$, we have from (\ref{appleq5}) 
$b=c$ or $a+d=0$.
The former case is already treated, and in the latter case, we have $(b+c)^2=ad=-a^2$, 
i.e., $a=d=b+c=0$.
This is also treated in the above.
Thus, in any case, we have 
$$
ad=4b^2, \;\; b=c \quad {\rm or} \quad a=d, \;\; b+c=0.
$$

The converse part can be easily verified by a similar argument as above.
\end{proof}

We remark that we here use left-invariant vector fields $e_i$ ($i=1,2,3$) to calculate $R_{ijkl}$ 
and $S_{ijklm}$, instead of using vector fields $X_i= \frac{\partial \;\;}{\partial x_i}$ defined by 
local coordinates, which we did in \S 5.1$\sim$\S 5.3.
But there is no problem since $R_{ijkl}$ and $S_{ijklm}$ are tensors, and their values are 
determined pointwisely for each fixed basis of the tangent space.

\bigskip

Now, we apply Proposition~\ref{Appl4} to left-invariant metrics on 
$3$-dimensional solvable Lie groups.
In the following, we use the same symbols as in \cite{AH} in expressing Lie algebras.

At first, concerning two Lie algebras $\mathfrak{h_3}$ and $\mathfrak{r}_{3,1}$, 
left-invariant metrics are essentially uniquely determined with bracket operations 
\begin{align*}
\mathfrak{h}_3 & : \; [e_1,e_2] = e_3, \\
\mathfrak{r}_{3,1} & : \; [e_1,e_2] = e_2, \;\; [e_1,e_3] = e_3.
\end{align*}
Here, $\{ e_1, e_2, e_3 \}$ is an orthonormal basis of the Lie algebra as before.
It is easy to see that the inequality $|\widetilde{R}| < 0$ holds for both 
cases (for details, 
see \cite{AH}, p.203$\sim$204).
Hence they cannot be locally isometrically embedded into $\mathbb{R}^4$, and we may 
omit these two Lie algebras in the following discussion.

Remaining $3$-dimensional solvable Lie algebras and their non-trivial bracket operations 
are given by 
\begin{align*}
& \mathbb{R}^3 \;\;\; : \\
& \mathfrak{r}_3 \;\;\;\;\, : \;\; [e_1,e_2] = e_2+2\lambda e_3, \;\; [e_1,e_3] = e_3 
\;\; (\lambda > 0) \\
& \mathfrak{r}_{3,\alpha} \;\; : \;\; [e_1,e_2] = e_2+2\lambda(\alpha-1)e_3, \;\; [e_1,e_3] 
= \alpha e_3 \;\; (-1 \leq \alpha < 1, \; \lambda \in \mathbb{R}) \\
& \mathfrak{r}_{3,\alpha}^{\prime} \;\; : \;\; [e_1,e_2] = \alpha e_2 -\lambda e_3, \;\; 
[e_1,e_3] = \frac{1}{\lambda}e_2+\alpha e_3 \;\; (\alpha \geq 0, \; \lambda \geq 1).
\end{align*}
The parameter $\alpha$ indicates the isomorphism class of Lie algebras and $\lambda$ is 
the parameter of the moduli space of left-invariant metrics on each Lie algebra.

Then, it is easy to see that left-invariant metrics satisfying the condition in 
Proposition~\ref{Appl4} are exhausted by 
$$
\mathbb{R}^3, \quad \mathfrak{r}_{3,0} \; (\lambda=0), \quad 
\mathfrak{r}_{3,\alpha}^{\prime} \; (\lambda=1).
$$
It is clear that $\mathbb{R}^3$ with any left-invariant metric is flat and can be 
locally isometrically embedded into $\mathbb{R}^3$.
The space corresponding to $\mathfrak{r}_{3,0}$ with $\lambda=0$ is the direct product of 
the hyperbolic plane $\mathbb{R}{\rm H}^2$ and $\mathbb{R}^1$.
Hence it can be locally isometrically embedded into $\mathbb{R}^3 \times \mathbb{R}^1 
= \mathbb{R}^4$.
As for the Lie algebra $\mathfrak{r}_{3,\alpha}^{\prime}$ with $\lambda=1$, we have 
$$
|\widetilde{R}| = \begin{vmatrix}
-\alpha^2 & 0 & 0 \\
0 & -\alpha^2 & 0 \\
0 & 0 & -\alpha^2
\end{vmatrix} = -\alpha^6.
$$
Hence, from the condition $|\widetilde{R}| \geq 0$, we have $\alpha=0$, and in this 
case the space is flat.
Therefore, it can be locally isometrically embedded into $\mathbb{R}^3$.
In this way, we can classify the left-invariant Riemannian metrics on 
$3$-dimensional solvable Lie groups again (see Theorem 1.1 of \cite{AH}).

In the above alternative proof, before considering the Gauss equation, we first examine Rivertz's 
equalities.
By this method we can speedily sweep out many left-invariant metrics that cannot be realized 
in $\mathbb{R}^4$.
It is the merit of this approach.

\bigskip

As for $3$-dimensional simple Lie groups, we have already used Rivertz's equalities in 
\cite{AH} in some different way.
We here explain it briefly, by using the same notations as in \cite{AH}.

For simple Lie algebras it is known that there exists an orthonormal basis 
$\{ e_1, e_2, e_3 \}$ such that the bracket operations are given by 
$$
[e_1,e_2] = \lambda_3e_3, \;\; [e_2,e_3] = e_1, \;\; [e_3,e_1] = \lambda_2e_2
$$
for some constants $\lambda_2 >0$, $\lambda_3 \neq 0$.
Then after some calculations, we know that 
$$
R_{1213} = R_{1223} = R_{1323} = 0
$$
and the components of $S_{ijklm}$ vanish except $S_{12131}, S_{12232}, S_{13233}$.
From these facts, we have immediately $r_2=r_3=r_5=0$, and the remaining three equalities 
are expressed as 
\begin{align*}
& r_1 = -R_{1212}(R_{2323}S_{12131}+R_{1313}S_{12232})=0, \\
& r_4 = R_{1313}(R_{2323}S_{12131}-R_{1212}S_{13233})=0, \\
& r_6 = R_{2323}(R_{1313}S_{12232}+R_{1212}S_{13233})=0.
\end{align*}
We can prove $R_{1212}, R_{1313}$, $R_{2323} \neq 0$ by using the assumption that the Gauss 
equation admits a solution and $\lambda_2>0$, $\lambda_3 \neq 0$ (cf. 210, \cite{AH}).
Thus we have the equalities 
\begin{align}
& R_{2323}S_{12131}= -R_{1313}S_{12232}=R_{1212}S_{13233}, \label{appleq7}
\end{align}
which are nothing but the equalities in Lemma 5.5 of \cite{AH}.

The equalities (\ref{appleq7}) are algebraic equations on the parameters $\lambda_2$, 
$\lambda_3$ with degree five.
In \cite{AH}, by solving these algebraic equations, we showed that the left-invariant metric that 
can be locally isometrically embedded into $\mathbb{R}^4$ is exhausted by the standard 
metric on $\mathfrak{so}(3)$ up to isometry and scaling.
In this way, we complete the classification.
We may say that we have essentially used Rivertz's six equalities already in \cite{AH}.

\medskip

Finally, we remark that in the case $|\widetilde{R}| = 0$, we cannot in general determine 
whether the Gauss equation admits a solution or not.
Actually, there occur both cases.
If $M$ is flat, then we have clearly $|\widetilde{R}| = 0$, and it is locally isometrically 
embedded into $\mathbb{R}^3 \subset \mathbb{R}^4$.
The product space $\mathbb{R}{\rm H}^2 \times \mathbb{R}^1$ appeared above is also 
included in this case.

On the contrary, the Lie algebra whose bracket operations are given by 
$$
[e_1,e_2] = 3e_3, \;\; [e_2,e_3] = 4e_1, \;\; [e_3,e_1] = 3e_2
$$
in terms of an orthonormal basis $\{ e_1,e_2,e_3 \}$ gives an opposite example.
In fact, its curvature is given by 
$$
R_{1212} = R_{1313} = 4, \quad R_{ijkl}=0 \:\: ({\rm otherwise}).
$$
Hence we have $|\widetilde{R}| = 0$, and it can be easily verified that the Gauss equation 
does not admit a solution in this case.
(Note that this Lie algebra is isomorphic to $\mathfrak{so}(3)$ as a Lie algebra.)
 
\bigskip

Since any left-invariant Riemannian metric is real analytic, it follows that 
any $3$-dimensional Lie group with left-invariant metric can be always locally isometrically 
embedded into $\mathbb{R}^6$ from classical Janet-Cartan's theorem.
Thus, our next theme should be to classify left-invariant metrics that can be locally 
isometrically embedded into $\mathbb{R}^5$.
But to attack this problem, we must find a new obstruction to show the non-existence of 
local isometric embeddings into $\mathbb{R}^5$ on the one hand, and on the other hand, 
we must find new isometric embeddings of left-invariant metrics.

Here we give one such example.
We consider the following $3$-dimensional Lie group, depending on the parameter 
$\alpha \in \mathbb{R}$:
$$
G_{\alpha} = \left\{ \left. \begin{pmatrix}
1 & y & z \\ 0 & x & 0 \\ 0 & 0 & x^{\alpha} \end{pmatrix} \right| x>0 \right\}.
$$
The parameters $x,y,z$ give a coordinate of $G_{\alpha}$.
Its Lie algebra is generated by the basis 
$$
e_1= \begin{pmatrix} 0 & 0 & 0 \\ 0 & -1 & 0 \\ 0 & 0 & -\alpha \end{pmatrix}, \;\;
e_2= \begin{pmatrix} 0 & 1 & 0 \\ 0 & 0 & 0 \\ 0 & 0 & 0 \end{pmatrix}, \;\;
e_3= \begin{pmatrix} 0 & 0 & 1 \\ 0 & 0 & 0 \\ 0 & 0 & 0 \end{pmatrix},
$$
whose bracket operations are given by 
$$
[e_1,e_2] = e_2, \;\; [e_1,e_3] = \alpha e_3.
$$
Hence it is isomorphic to the Lie algebra $\mathfrak{r}_{3,\alpha}$, when $-1 \leq \alpha \leq 1$.
We introduce the left-invariant metric on $G_{\alpha}$ such that the above $\{ e_1,e_2,e_3 \}$ 
becomes orthonormal.

We denote the left-invariant vector fields corresponding to $e_i$ by the same letter $e_i$.
Then they are explicitly given by 
$$
e_1=\displaystyle{ -x \frac{\partial \;\;}{\partial x}-y \frac{\partial \;\;}{\partial y}
-\alpha z \frac{\partial \;\;}{\partial z}}, \;\;
e_2 = \displaystyle{\frac{\partial \;\;}{\partial y}}, \;\;
e_3 =  \displaystyle{\frac{\partial \;\;}{\partial z}}.
$$
Thus the left-invariant metric is expressed as 
$$
ds^2=\frac{1}{x^2} \left\{dx^2+(xdy-ydx)^2+(xdz-\alpha zdx)^2 \right\}.
$$

Now we consider the mapping $\varphi$ from $G_{\alpha}$ to $\mathbb{R}^5$ expressed 
in the form
$$
\varphi(x,y,z) = \left(f(x), \,kx \cos \frac{y}{kx},\,kx \sin \frac{y}{kx}, \,x^{\alpha}\cos 
\frac{z}{x^{\alpha}}, \,x^{\alpha} \sin \frac{z}{x^{\alpha}} \right) \label{appleq8}
$$
for some positive constant $k$.
Then it is easy to see that $\varphi$ gives an isometric immersion if and only if the equality 
\begin{align}
f^{\prime}{}^2 = \frac{1-k^2x^2-\alpha^2 x^{2\alpha}}{x^2} \label{appleq8}
\end{align}
holds.
In case $\alpha \geq 0$, we can easily find a function $f(x)$ defined in some open interval, 
satisfying (\ref{appleq8}).
In case $\alpha < 0$, by choosing a positive constant $k$ suitably, we can also show that 
the numerator $1-k^2x^2-\alpha^2x^{2\alpha}$ takes a positive value at some point $x > 0$.
(The constant $k$ necessarily depends on the parameter $\alpha$.)
Thus we can find a function $f(x)$ which satisfies the equality (\ref{appleq8}) in some open 
interval, and hence the Lie group $G_{\alpha}$ can be locally isometrically embedded into 
$\mathbb{R}^5$ in any case.
Combining with our previous result, we know that this map gives the least dimensional local 
isometric embedding of $G_{\alpha}$ into the Euclidean spaces.

Concerning the local isometric embedding problem for the $3$-dimensional 
Riemannian manifolds into $\mathbb{R}^5$, there is Kaneda's deep investigation now 
in progress (cf. \cite{K2}).

%\bmhead{Acknowledgments}

%\section*{Declarations}
%{\bf Conflict of interest} On behalf of all authors, the corresponding author states that there is no %conflict of interest.

\bigskip

\end{document}